\documentclass[11pt,a4paper]{article}
\usepackage[utf8]{inputenc}
\usepackage[T1]{fontenc}
\usepackage[marginparwidth=2.5cm]{geometry}
\usepackage[dvipsnames]{xcolor}
\usepackage{libertine}
\usepackage{amsmath,amssymb,amsthm,amsfonts,stmaryrd}
\usepackage{dsfont,mathrsfs}
\usepackage{mathtools}
\usepackage[colorlinks=true,citecolor=blue,linkcolor=blue]{hyperref}
\usepackage{enumitem}
\usepackage{bm}
\usepackage[todonotes={textsize=scriptsize}]{changes}

\setlength\parskip{3pt}

\newtheorem{dfn}{Definition}
\newtheorem{thm}{Theorem}
\newtheorem{rmk}{Remark}
\newtheorem{lem}{Lemma}

\newtheorem{prop}{Proposition}

\newtheorem{cor}{Corollary}

\newcommand{\ffi}{\varphi}
\newcommand{\dd}{\mathrm{d}}
\newcommand{\C}{\mathrm{C}}
\newcommand{\1}{\bm{1}}
\newcommand{\PP}{\mathbb{P}}
\newcommand{\EE}{\mathbb{E}}
\newcommand{\NN}{\mathbb{N}}
\newcommand{\RR}{\mathbb{R}}

\newcommand{\T}{\mathbb{T}}
\newcommand{\ZZ}{\mathbb{Z}}

\newcommand{\vertiii}[1]{{\vert\kern-0.08em\vert\kern-0.08em\vert #1 \vert\kern-0.08em\vert\kern-0.08em\vert}}
\newcommand{\mean}[1]{[ #1 ]_{\T}}
\newcommand{\meanM}[1]{[ #1 ]_M}
\newcommand{\meand}[1]{[ #1 ]_{\T^d}}

\newcommand{\authora}[1]{\gdef\authora{#1}}
\newcommand{\addressa}[1]{\gdef\addressa{#1}}
\newcommand{\emaila}[1]{\gdef\emaila{#1}}

\newcommand{\authorb}[1]{\gdef\authorb{#1}}
\newcommand{\addressb}[1]{\gdef\addressb{#1}}
\newcommand{\emailb}[1]{\gdef\emailb{#1}}

\newcommand{\authorc}[1]{\gdef\authorc{#1}}
\newcommand{\addressc}[1]{\gdef\addressc{#1}}
\newcommand{\emailc}[1]{\gdef\emailc{#1}}

\makeatletter
\newcommand{\@endstuff}{
  \bigskip\vspace{1ex}
  \small\scshape\noindent
  \authora\\
  \addressa\\
  E-mail address: \href{mailto:\emaila}{\texttt{\emaila}}\\
  \medskip\\
  \authorb\\
  \addressb\\
  E-mail address: \href{mailto:\emailb}{\texttt{\emailb}}\\
  \medskip\\
  \authorc\\
  \addressc\\
  E-mail address: \href{mailto:\emailc}{\texttt{\emailc}}}
\AtEndDocument{\@endstuff}
\makeatother

\authora{Vincent Bansaye}
\addressa{Centre de Mathématiques Appliquées (CMAP), École Polytechnique, CNRS, Palaiseau, France.}
\emaila{vincent.bansaye@polytechnique.edu}

\authorb{Ayman Moussa}
\addressb{Laboratoire Jacques-Louis Lions (LJLL), Sorbonne Université,  \& Département de Mathématiques et Applications (DMA), École Normale Supérieure, Université PSL, CNRS, Paris, France.}
\emailb{ayman.moussa@sorbonne-universite.fr}
 
\authorc{Felipe Muñoz-Hernández}
\addressc{Departamento de Ingenier\'ia Matem\'atica (DIM), Universidad de Chile, Santiago, Chile.\\
  Centre de Mathématiques Appliquées (CMAP), École Polytechnique, CNRS, Palaiseau, France.}
\emailc{fmunozh@dim.uchile.cl}

\title{Stability of a cross-diffusion system and approximation by repulsive random walks: a duality approach}
\author{\authora \and \authorb \and \authorc}
\date{\today}

\begin{document}

\maketitle

\begin{abstract}

    We consider  conservative cross-diffusion systems for two species where individual motion rates  depend linearly on the local density of the other species.
We develop duality estimates and  obtain stability and approximation results. 
We  first control  the time evolution of the gap between two bounded solutions  by means of its initial value. As a by product, we obtain  a uniqueness result for bounded solutions valid for any space dimension, under a non-perturbative smallness assumption. Using a discrete counterpart of our duality estimates, we prove the convergence
of   random walks with local repulsion in one dimensional discrete space  to cross-diffusion systems.
More precisely, we prove quantitative estimates for the gap between the stochastic process and the cross-diffusion system. We give first rough but general estimates; then we use the duality approach
to obtain fine estimates under less general conditions. 

  \bigskip
  \noindent\textbf{Key words and phrases:} \textit{Cross-diffusion, duality, stability, scaling limits, repulsive random walks.}
\end{abstract}

\section{Introduction and notation}

Approximations of interacting large populations is motivated by physics, chemistry, biology and ecology. A famous macroscopic model was introduced by Shigesada, Kawasaki and Teramoto in \cite{SKT1979} to describe competing species which diffuse \emph{with} local repulsion. In the case of two species, it writes
\begin{equation*}
  \left\{
  \begin{aligned}
    \partial_tu - \Delta\bigl( d_1u + a_{11} u^2+a_{12}uv \bigr) &= u(r_1-s_{11}u-s_{12}v), \\
    \partial_tv - \Delta\bigl( d_2v + a_{22}v^2+a_{21}uv  \bigr) &= v(r_2-s_{21}u-s_{22}v),
  \end{aligned}
  \right.                                
\end{equation*}
where $u$ and $v$ are the densities of the two species and $d_i,r_i,a_{ij}$ and $s_{ij}$ are non-negative real numbers. Completed by initial and boundary conditions, this system (that we simply refer to as the \textit{SKT system}) offers a model for the spreading of two interacting species which mutually influence their propensity to diffuse, through the cross-diffusion terms $a_{ij}$.  The other coefficients represent either natural diffusion ($d_i$ coefficients), reproduction ($r_i$ coefficients) or competition ($s_{ij}$ coefficients).  The main motivation of \cite{SKT1979} was to propose a population dynamics model able to detect segregation, that is the existence of non-constant steady states $\overline{u}$ and $\overline{v}$ having disjoint superlevel sets of low threshold value. As a consequence of this motivation, the first mathematical results dealing with this system  focused on sufficient conditions for the coefficients to ensure existence of non-constant steady states, with a careful study of the stability of the latter. This study of possible segregation states is still active and we refer to the introduction of \cite{bks} for a nice state of the art. It is a striking fact that during its first years of existence within the mathematical community, the SKT system has not been studied through the prism of its Cauchy problem. As a matter of fact, existence of solutions has been tackled only a few years later: the first paper dealing with this issue is \cite{kim1984smooth} and explores the system under very restrictive conditions. Several attempt followed, but only with partial results. A substantial progress was achieved by Amann \cite{amann90,amann90b}, who proposed a rather abstract approach to study generic quasilinear parabolic systems. The scope of this technology goes far beyond the sole case of cross-diffusion systems. In the specific case of the SKT system, it offers existence of local (regular) solutions, together with a criteria of explosion to decide if the existence is global or not. This fundamental result of Amann has been then used by several authors to establish existence of global solutions for particular forms of the SKT system. This is done, in general, under a strong constraint on the coefficients. For instance, \cite{louwin} treats the case of equal diffusion rates in low dimension and \cite{HoanNguPha}, settles the one of triangular systems (that is, for two species, when $a_{12}a_{21}=0$). However, the general question of existence of global solution for the complete system remains open, even in low dimension. 

Another way to produce a global solution is to sacrifice the regularity of the solutions, and deal with only weak ones. This strategy relies on the so-called \emph{entropic structure} of the system: SKT systems as the one previously introduced, admit Lyapunov functionals which decay along time and whose dissipation allows to control the gradient of the solution. This method has been used successfully in \cite{chenjun} to prove, for the first time, existence of global weak solutions for the SKT system, without restrictive assumptions on its coefficients. After it first discovery in \cite{Galiano_Num_Math}, this entropic structure has been explored and generalized to several systems, allowing for the construction of global weak solutions for variants of the original SKT system (see \cite{jungel_bound_by,dlmt} and the references therein). With this low level of regularity for the solutions, uniqueness becomes an issue in itself. It has been studied either under simplifying assumptions on the system like in \cite{pham_temam_unique,chen_jun_unique} or in the weak-strong setting thanks to the use of a relative entropy (see \cite{chen_jun_unique_bis}).

\subsection{Objectives and state of the art}
This work is initially motivated by
 yet another mathematical challenge offered by the SKT system: its rigorous derivation. The diffusion operator used in the SKT system is  specific. We focus in this paper on the main difficulty  raised by  this operator, which is the non-linearity of  diffusion term. The initial   goal of the work is to approximate the conservative SKT system, without self-diffusion, that is the following one
\begin{equation}
  \label{eq:SKT:cons}          
  \left\{                                      
  \begin{aligned}
    \partial_tu - \Delta( d_1u + a_{12}uv ) &= 0, \\
    \partial_tv - \Delta( d_2v + a_{21}uv ) &= 0,
  \end{aligned}
  \right.
\end{equation}
where all the coefficients $d_i$ and $a_{ij}$ are assumed positive.
Whereas (possibly heterogeneous) diffusion of lifeless matter (\emph{e.g.} ink or any type of chemical substance) uses the Fick diffusion operator $-\textnormal{div}(\mu\nabla \cdot)$ to express the spread, SKT systems rely on the (more singular) operator $-\Delta(\mu\,\cdot)$. As it was already explained in \cite{SKT1979}, this choice of diffusion operator is at the core of the repulsive mechanism allowing the segregation to appear. However, the justification proposed in \cite{SKT1979} was rather formal, leaving open the question of the rigorous justification of  SKT systems. As far as our knowledge goes, there exist mainly three approaches for the derivation of  SKT systems  
\begin{enumerate}[label=(\roman*)]
  \item The first path was proposed in \cite{iidadiff}, where an SKT model is obtained as an asymptotic limit of a family of reaction-diffusion systems. In this approach the idea is that one of the two species exists in two states (stressed or not), and switch from one to the other with a reaction rate which diverges. This was used in \cite{iidadiff} to obtain formally a triangular cross diffusion system. This strategy has been followed with a rigorous analysis, mainly to produce triangular systems (see \cite{ariane} and references therein) and more recently for a family of "full" systems in \cite{ddj} which, however, do not include the SKT one.
  \item Another strategy was proposed by Fontbona and Méléard in \cite{FM}. The idea is to start from a stochastic population model in continuous space where the individuals' displacements depend on the presence of concurrents. Then, the large population limit (under adequate scaling) leads to a non-local cross-diffusion model. In comparison with the system \eqref{eq:SKT:cons}, the limit model rigorously derived in \cite{FM} is a lot less singular, because of several convolution kernels. It was explicitly asked in \cite{FM}, whether letting the convolution kernels vanish to the Dirac mass was handable limit or not. A first partial answer was given in \cite{moussa_nonloc_tri}, but applied for only specific triangular systems. More recently, it was discovered in \cite{dietert_moussa_nonloc} that even for the non-local systems, it is possible to ensure the persistence of the entropy
  structure, allowing to answer fully to the question of Fontbona and Méléard, at least for the standard SKT system. \\
  A little bit before \cite{dietert_moussa_nonloc} appeared, Chen \emph{et. al.} proposed another strategy in \cite{chendausjunbis} (see also \cite{chendausjun} which deals with a slightly different family of systems). It also starts from a stochastic model and makes use of an intermediate non-local one. The main difference with \cite{FM,moussa_nonloc_tri,dietert_moussa_nonloc} is that in \cite{chendausjunbis} the two asymptotics are done simultaneously (size of population to infinity and parameter of regularization to $0$). 
  %The maint interest of this strategy is 
  This direct approach amounts to "commute" the asymptotic diagram from the stochastic model to the final PDE; this is a common feature with the current work that we will comment later on.
  
 \item The third path was proposed in \cite{DDD2019} and justifies the SKT model through a semi-discrete one. The latter is itself derived from a stochastic population model in discrete space where individuals are assumed to move by pair, in order to ensure reversibility of the process and the existence of an entropy for the limit model. In \cite{DDD2019} the link to the stochastic was done formally whereas the asymptotic analysis linking the semi-discrete model to the SKT system was proved rigorously, relying on a compactness argument which is allowed thanks to the existence of the Lyapunov functional for the semi-discrete system.
\end{enumerate}

In this paper, we are interested in   connections between  microscopic random individual-based models (or particle system) and such macroscopic deterministic dynamics, in the spirit of strategies (ii) and (iii) described above. We do not use any non-local approximated system as in \cite{FM,chendausjunbis}, being inspired instead by the semi-discrete approach proposed in \cite{DDD2019}. We consider also a discrete space and  that each species moves randomly and is only sensitive to the local size of the other species. Let us comment the main differences and novelties of this work compared to \cite{DDD2019}. First, we prove rigorously that the suitably scaled stochastic process converges in law in Skorokhod space to SKT system \eqref{eq:SKT:cons}  and we perform this  space and time scaling limit at once. Besides, individuals of each species move independently with a rate proportional to the number of individuals of the other species,  on the same site. We do not need to make them move by pair,  which may be hard to justify regarding phenomenon at stake. Indeed, we do not need a  reversibility property and do not use the entropic structure. The main difficulty to prove convergence of the stochastic process at once lies in the control of the cumulative  quadratic rates due to local interactions when the number of sites becomes large.
As far as we have seen,  entropy structure does not provide the suitable control of these non-linear terms and a way  to get tightness and identification in general. We use a different approach  
 based on generalized duality. This provides   quantitative estimates  in terms of space discretization and size of population. Moreover, 
at the level of the PDE system, it implies a local uniqueness result for bounded solutions of the SKT system. The duality approach allows to compare locally the stochastic process with its semi-discrete deterministic approximation. It is optimal in the sense that it provides the good  time space scaling  for such an approximation.  

%In particular, the third point allows us to consider a stochastic model in which individuals do not necessarily move by pair (a behavior which, with respect to the phenomenon at stake, is hard to justify). More precisely, we  consider a discrete population model composed of two different species. The population is spatially distributed among $M$ sites. The process  is a  continuous time Markov chain $(\bm{U}(t),\bm{V}(t))_{t\geq0}$ taking values in $\NN^M\times\NN^M$. The two coordinates count the number of particles of each species at each site for each time $t\geq 0$.

Let us describe now the stochastic individual-based model. The population is spatially distributed among $M$ sites. The process under consideration is a  continuous time Markov chain $(\bm{U}(t),\bm{V}(t))_{t\geq0}$ taking values in $\NN^M\times\NN^M$. The two coordinates count the number of individuals of each species at each site, for each time $t\geq 0$.
Each individual of each species follows a random walk and its jumps rate increases linearly with respect to the number of individuals of the other species. The dynamic is defined by the jump rates as follows.
For any vector of configurations $(\bm{u},\bm{v})\in\NN^M\times\NN^M$, the transitions are
\begin{align*}
  \begin{array}{lcl}
     \bm{u} \mapsto \bm{u} + \bigl( \bm{\mathrm{e}}_{i+\theta} - \bm{\mathrm{e}}_{i} \bigr) & \text{ at rate } & 2u_i(d_1+a_{12} v_i),  \\
     \bm{v} \mapsto \bm{v} \,+ \bigl( \bm{\mathrm{e}}_{i+\theta} - \bm{\mathrm{e}}_{i} \bigr) & \text{ at rate } & 2v_i(d_2+a_{21}u_i), 
%    \bm{U} \mapsto \bm{U} + \bigl( \bm{\mathrm{e}}_{i+\theta} - \bm{\mathrm{e}}_{i} \bigr) \text{ at rate } a_{12}U_iV_i, & & \bm{V} \mapsto \bm{V} + \bigl( \bm{e}_{i+\theta} - \bm{e}_{i} \bigr) \text{ at rate } a_{21}U_iV_i, \\
  \end{array}
\end{align*}
where $(\bm{\mathrm{e}}_j)_{1\leq j\leq M}$ is the canonical basis of $\RR^M$, $\bm{\mathrm{e}}_0=\bm{\mathrm{e}}_{M}$, $\bm{\mathrm{e}}_{M+1}=\bm{\mathrm{e}}_1$ and $\theta\in \{-1,1\}$ with both values equally likely.
 Let us mention that hydrodynamic  limits of  other stochastic models with repulsive  species have been considered, in particular in the context of exclusion processes, see e.g. \cite{Quastel}. In that case, local densities  are bounded so difficulties and limits are different. In an other direction, stochastic versions of the limiting SKT systems have been considered, see e.g. \cite{DHJ2020}. We also mention \cite{HJV} for hydrodynamic limit to fast diffusion, where the non linearly is also in the motion component. The model is different and we are interested here in  the interaction of two species, without self diffusion. Besides our techiques are different since we do not rely and do not need  an entropic structure and the control of the approximation involves a different distance.

This work contains two main results which at first sight can appear unrelated in their formulation. 
%However, the link between them lies in the core of their proof and is detailed below. Throughout all our study, we consider the SKT system \eqref{eq:SKT} in the conservative case: $r_i=s_{ij}=0$, without self-diffusion. Reaction terms and self-diffusion could be added, but this would not change drastically the forthcoming analysis.
The first result is a quantitative stability estimate on the SKT system which bounds the distance between two solutions in terms of their initial distance. This result is based on a new  duality lemma and applies for bounded solutions, only if one of them is small enough. As a by-product of this stability estimate, we prove uniqueness of bounded solutions of the conservative SKT system under a smallness condition which does not imply ellipticity for the system (later on, we will comment on this non-perturbative smallness condition).  This result %(in which no asymptotic or semi-discrete setting is considered) 
is valid in arbitrary dimension and is, as far as our knowledge goes, new. Uniqueness theorems for (only) bounded solutions of the full SKT system are missing in the current literature \cite{chen_jun_unique,chen_jun_unique_bis,pham_temam_unique}.

The second main result is the convergence of the properly scaled sequence of processes $(\bm{U}^{M,N},\bm{V}^{M,N})_{M,N\in\NN}$  to the SKT system. We obtain quantitative estimates of the gap between the trajectories of this process extended to the continuous space and the solution of SKT system, in a large population  and diffusive regime. This analysis is performed in a one dimensional setting for the space variable. The strategy is to insert the semi-discrete model proposed in \cite{DDD2019} and estimate separately the gap between our stochastic process and this semi-discrete system 
and then, estimate (with enough uniformity) the distance between the semi-discrete system and the continuous  SKT limit.  Following this plan, we  first  propose a general estimate, which relies on naive bounds of the quadratic diffusion term. Roughly, we  first bound locally the size of the population by the (constant) total number of individuals.   These  bounds allow for convergence with a fixed number of sites but lead to an unreasonable assumption of a superexponential number of individuals per site when the number of sites increases. When we faced this difficulty, we tried to obtain an estimate as sharp as possible to capture the good scales and compare on each site the different objects. It's during this step that we discovered the stability estimate described above, which is interesting for its own sake. A  nice feature of this stability estimate is that we can transfer it onto the semi-discrete  and stochastic setting.   We obtain then the convergence of the stochastic model towards the SKT system, with sharp estimates and relevant size scales. This asymptotic study shares a similar limitation as the previous paragraph: it holds only under the assumption of small regular solution of the SKT system, which is ensured by Amann's theorem \cite{amann90,amann90b}.

The paper is organized as follows. In the end of this section,  we collect several notations which will be used throughout the paper. In Section~\ref{sec:procmain} we define the sequence of stochastic processes we consider and we recover the semi-discrete system introduced in \cite{DDD2019}. We also  state our two main results and comment on some potential extensions. In Section~\ref{sec:compactness} we show the convergence in law in path space of the stochastic process towards the semi-discrete system when the number of individuals goes to infinity but the number of sites remains fixed. We provide a quantification of this convergence. It
implies the general (no restriction on the limiting SKT system) but naive (in terms of scales) convergence  discussed above. Then, Section~\ref{sec:duality} is dedicated to the duality estimates with source terms and their consequences. These duality estimates account for the interacting system when one of the population is seen as an exogenous environment, which amounts to decouple the two species.  In a first short paragraph (Subsection~\ref{subsec:contset}) we state and prove the generalized duality lemma and its application to the stability estimate of the SKT system in the continuous setting. This paragraph is the only one of the study in which we work in arbitrary dimension for the space variable. Then, the rest of Section~\ref{sec:duality} focuses on the translation of these estimates in the semi-discrete setting. This includes the definition of reconstruction operators, the study of the discrete laplacian matrix and the translation of classical function spaces into the discrete setting. Eventually in Section~\ref{sec:quantification}, we apply the previous machinery to the difference between the stochastic process and the approximated system that solutions of \eqref{eq:SKT:cons}  solve when looked at a semi-discrete level. We then deduce our main asymptotic  theorem by controlling the martingales and approximation terms. 
%Finally, we end with Section \ref{sec:remarks} where we give some concluding remarks and  perspectives. 
In a short appendix, we also give a dictionary which gives the correspondence of different objects in the discrete and continuous settings.

\subsection{Notation}\label{subsec:not}

\subsubsection*{Finite-dimensional vectors}
Throughout the article, vectors will always be written in bold letters. The canonical basis of $\RR^M$ will be denoted $(\bm{\mathrm{e}}_j)_{1\leq j\leq M}$. Due to the periodic boundary condition that we will use, we will frequently use the convention $\bm{\mathrm{e}}_0=\bm{\mathrm{e}}_M$ and $\bm{\mathrm{e}}_{M+1}=\bm{\mathrm{e}}_1$.

Given $M\in\NN$ and $p\in[1,\infty)$ we introduce a rescaled norm $\|\cdot\|_{p,M}$ defined for $\bm{x}\in\RR^M$ by $\|\bm{x}\|_{p,M}:=M^{-1/p}\|\bm{x}\|_p$
where $\|\cdot\|_p$ denotes the usual $\ell^p$ norm on $\RR^M$. Similarly, we define the rescaled euclidean inner-product $(\cdot|\cdot)_M$ of $\RR^M$ for $\bm{x},\bm{y}\in\RR^M$ by 
$(\bm{x} |\bm{y})_M=M^{-1}(\bm{x}|\bm{y})$,
where $(\cdot|\cdot)$ is the usual inner-product of $\RR^M$ so that $\|\bm{x}\|_{2,M}^2 = (\bm{x}|\bm{x})_M$.

The symbol $\odot$ is the internal Hadamard product on $\RR^M$, that is $(\bm{x}\odot \bm{y})_i = x_i y_i$. We will also often use (when it makes sense) the operator $\bm{x}\oslash \bm{y}$ defined by $(\bm{x}\oslash \bm{y})_i=x_i/y_i$ and the ``vectorial'' square-root $\bm{x}^{1/2}$ whose components are $(\sqrt{x_i})_{1\leq i\leq M}$.

The arithmetic average of all the components of a vector $\bm{x}$ will be denoted $[\bm{x}]_M \coloneqq M^{-1}\sum_{i=1}^M x_i$.

The vector of $\RR^M$ for which every component equals $1$ is denoted $\mathbf{1}_M$. The orthogonal projection onto $\mathrm{Span}_\RR(\mathbf{1}_M)^\perp$ is denoted with a tilde, that is: $\widetilde{\bm{x}}= \bm{x}-\meanM{\bm{x}} \mathbf{1}_M$.

Finally, for $\bm{x},\bm{y}\in\RR^M$ we write $\bm{x}\geq \bm{y}$ whenever $\bm{x}-\bm{y}\in\RR_+^M$.

\subsubsection*{Functions}

We will manipulate random and deterministic functions which may depend on the time variable $t\in\RR_+$ and the space variable $x\in\T^d$, where $\T\coloneqq\RR/\ZZ$ is the flat periodic torus. %In order to avoid lengthy formulae, the event variable $\omega$ will be in general omitted. 
We will rely on the following convention for functions: uppercase letters will be reserved for random elements whereas lowercase letters will represent deterministic functions. Accordingly to the previous paragraph, vector valued functions will be denoted in bold whereas scalar valued functions will be denoted with the normal font.

Quite often results will be stated on a fixed time interval $[0,T]$. For this reason, we introduce the periodic cylinder $Q_T \coloneqq [0,T] \times\T^d$. For any function space $E$ defined on $\T^d$ or $Q_T$, the corresponding norm will be denoted $\|\cdot\|_E$, \emph{e.g.} $\|\cdot\|_{L^2(\T^d)}$. In case of a Hilbert structure, the inner-product will be denoted by $(\cdot|\cdot)_E$, \emph{e.g.} $(\cdot|\cdot)_{L^2(\T^d)}$. We will frequently use the $H^s(\T^d)$ Sobolev space and their homogeneous subspace $\dot{H}^s(\T^d)$ constituted of those elements having a vanishing average on $\T^d$.  

\medskip

For random functions $Z:\Omega\times Q_T \rightarrow \mathbb{R}$ we will frequently use the norm 
\begin{align}
  \label{eq:tripnorm}\vertiii{Z}_T \coloneqq \left(\sup_{t\in[0,T]} \EE\big(\|Z(t)\|_{H^{-1}(\T^d))}^2\big) \, + \, \EE\big(\|Z\|_{ L^2(Q_T)}^2\big) \right)^{1/2}.
  \end{align}
  Note that in the case of a deterministic function $z$, the previous norm becomes simply \begin{align}
  \label{eq:tripnorm:det}
\vertiii{z}_T \coloneqq \left(\|z\|_{L^\infty([0,T];H^{-1}(\T^d))}^2+ \|z\|_{ L^2(Q_T)}^2 \right)^{1/2}.
  \end{align}
%and in particular  the following duality estimate which holds for $(\ffi,\psi)\in L^2(\T^d)\times H^1(\T^d)$
%\begin{equation*}
%  \left|\int_{\T^d} \ffi\,\psi\right| \leq \|\ffi\|_{\dot{H}^{-1}(\T^d)}\|\psi\|_{\dot{H}^1(\T^d)} + |[\ffi]_{\T^d} [\psi]_{\T^d}|.
%\end{equation*}

Finally, for any metric space $X$, $D([0,T],X)$ denotes the space of c\`adl\`ag functions from $[0,T]$ to $X$ endowed with the Skorokhod topology.

\section{Main objects and results}\label{sec:procmain}

Before stating our main results, we need to define precisely the objects that we aim at considering.
%\subsection{The target (continuous) system}

 %We will prove rigorous approximation for this system by stochastic process, in one dimension, with periodic boundary condition (which is the framework of \cite{DDD2019}), which amounts to consider it on the flat torus $\T$. However, the duality estimate of Subsection~\ref{subsec:contset} will be stated and proved on $\T^d$, as their proof is invariant by dimension.

\subsection{Repulsive random walks and scaling}

%We will also consider periodic boundary conditions, meaning that if a particle is in the first site ($M$-th site) and jumps to its left (right) then it will arrive to the $M$-th (first) site.

Let us define the stochastic process by means of a trajectorial representation using Poisson random measures. We consider a probability space $(\Omega, \mathcal{F}, \PP)$ and introduce a family of independent Poisson random measure $(\mathcal{N}^j)_{j\in\NN}$ on $\RR_+\times\RR_+\times\lbrace-1,1\rbrace$ with common intensity $\mathrm{d}s\otimes \dd\rho \otimes \beta(\mathrm{d}\theta)$, where $\beta$ is the law of a Bernoulli$\bigl(\frac{1}{2}\bigr)$ random variable. We refer to 
Definition 8.1, Chapter 1 in \cite{IW} for the definition of Poisson random measure. Moreover,
 the initial data $(\bm{U}(0),  \bm{V}(0))$ almost surely belongs to $\NN^M\times\NN^M$. The corresponding process $(\bm{U}(t),\bm{V}(t))_{t\geq0}$ is then defined as the unique strong solution in $D([0,\infty), \mathbb N^{2M})$ of the following system of stochastic differential equations (SDEs) driven by the aforementioned measures
\begin{equation*}
  \left\{
  \begin{aligned}
    \bm{U}(t) ={}& \bm{U}(0) + \int_0^t \int_{\RR_+ \times \{-1,1\}} \, \sum_{j=1}^M \1_{\rho \leq 2U_j(s^{-})(d_1 + a_{12} V_j(s^{-}))} \bigl( \bm{\mathrm{e}}_{j+\theta} - \bm{\mathrm{e}}_{j} \bigr) \,\mathcal{N}^j(\dd s, \dd \rho, \dd \theta), \\
    \bm{V}(t) ={}& \bm{V}(0) + \int_0^t\int_{\RR_+ \times \{-1,1\}} \, \sum_{j=1}^M \1_{\rho \leq 2V_j(s^{-})(d_2 + a_{21}U_j(s^{-}) )} \bigl( \bm{\mathrm{e}}_{j+\theta} - \bm{\mathrm{e}}_{j} \bigr) \,\mathcal{N}^j(\dd s,  \dd \rho, \dd \theta),
  \end{aligned}
  \right.
\end{equation*}
where the jump rates $d_1,d_2,a_{12}$ and $a_{21}$ are the one of \eqref{eq:SKT:cons}. Let us first explain roughly the terms of these SDEs. The  measures $\mathcal N$ produce  the sources of randomness for the jumps; the indicator functions ${\bf 1}$ select the jumps which actually occur depending on the number of individuals of each species;   the jump of one individual from $j$ to its neighbor $j+\theta$ induces the variation
$\bm{\mathrm{e}}_{j+\theta} - \bm{\mathrm{e}}_{j}$ on the vector couting the population size of site, resp.  $\bm{U}$ and $\bm{V}$. The  existence and uniqueness of this system of SDEs can be proved by  induction using the fact that the process is constant between two jumps and the total jump rate is bounded.  Indeed, the  total population size of each species
is constant along time:  $\| \bm{U}(t)\|_{1,M}=\| \bm{U}(0)\|_{1,M}$, $\| \bm{V}(t)\|_{1,M}=\| \bm{V}(0)\|_{1,M}$. Therefore, 
conditionally on the initial value $(\bm{U}(0),  \bm{V}(0))$, the 
process $(\bm{U}(t),\bm{V}(t))_{t\geq0}$ is a pure jump Markov process on a finite state space 
with bounded rates.
The strong uniqueness and existence of this system of SDEs are actually also a consequence of  more general statements for SDEs with jumps described a Poisson random measure, see in particular Theorem 9.1, Chapter 4 in \cite{IW}.
  
%Existence, non-explosion and pathwise uniqueness is well known for these objects through their representation by exponential times and Poissonian coding.

%\begin{rmk}  Since the particles are considered indistinguishable it is equivalent to describe the process by means of the individual positions of the particles and to describe it by the total number of particles at each site.\end{rmk}

We are interested in the approximation  (hydrodynamic limit) when the population size and the number of sites tend to infinity. Informally,  we consider 
$$(\bm{U}^{M,N}(t),\bm{V}^{M,N}(t))_{t\geq0}=(\bm{U}(M^2t)/N,\bm{V}(M^2t)/N)_{t\geq0}$$ 
but now  interaction  occurs through the local density of individuals.  The scaling parameter $N\in\NN^*$ 
yields  the normalization of the population per site and provides a limiting density when $N$ goes to infinity. The initial population per site is of order of magnitude $N$ and each species' motion rate is an affine function
of
 the density of the other species on the same site.
% Together with $M\in\NN^*$, both are going to be the scaling parameters of the process.
The motion of each individual is centered and we consider the diffusive regime, which leads the time acceleration  term by a factor $M^2$. %in the initial process $$, which amounts to multiply the transition rates by $M^2$.
%, which in fact 
This time acceleration is equivalent
to multiply the jump rates by the same factor. %Finally, we also multiply the rates by 2 in order to obtain exactly in the limit the system \eqref{eq:SKT}.
%Considering this the state space of the rescaled process will be $\NN^M/N 
%\times \NN^M/N$.

More precisely,  for $i,j=1,2$ and $t\geq 0$, we set 
\begin{align*}
  \eta_{1,j}^{M,N}(t) &\coloneqq 2M^2NU_j^{M,N}(t)\bigl(d_1 + a_{12}V_j^{M,N}(t)\bigr), \\
  \eta_{2,j}^{M,N}(t) &\coloneqq 2M^2NV_j^{M,N}(t)\bigl(d_2 + a_{21}U_j^{M,N}(t)\bigr).
\end{align*}
%\eta_{ij}^{M,N}(u,v) \coloneqq M^2Nu(d_i + a_{ij}v).
Given an initial data $(\bm{U}^{M,N}(0) , \bm{V}^{M,N}(0))$, the normalized process $(\bm{U}^{M,N}(t),\bm{V}^{M,N}(t))_{t\geq0}$ is defined as the unique strong solution in $D([0,\infty),\RR_+^{2M})$ of the following system of SDEs 
\begin{equation}\label{eq:traj}
  \left\{
  \begin{aligned}
    \bm{U}^{M,N}(t) &= \bm{U}^{M,N}(0) + \int_0^t\int_{\RR_+ \times \{-1,1\}} \sum_{j=1}^M\1_{\rho \leq \eta_{1,j}^{M,N}(s^-)} \frac{\bm{\mathrm{e}}_{j + \theta} - \bm{\mathrm{e}}_j}{N} \,\mathcal{N}^j(\dd s, \dd \rho, \dd \theta), \\
    \bm{V}^{M,N}(t) &= \bm{V}^{M,N}(0) + \int_0^t\int_{\RR_+ \times \{-1,1\}} \sum_{j=1}^M\1_{\rho \leq \eta_{2,j}^{M,N}(s^-)} \frac{\bm{\mathrm{e}}_{j + \theta} - \bm{\mathrm{e}}_j}{N} \,\mathcal{N}^j(\dd s, \dd \rho, \dd \theta).
  \end{aligned}
  \right.
\end{equation}
%. This object turns out to be a pure jump Markov process whose trajectory can be constructed as the solution of
%\begin{align}
%  \bm{U}^{M,N}(t) &= \bm{U}^{M,N}(0) + \int_0^t\hspace{-1ex}\int_{\RR_+ \times \{-1,1\}} \sum_{j=1}^M \1_{\rho \leq M^2NU^{M,N}_j(s^{-}) \bigl(d_1 + a_{12}V^{M,N}_j(s^{-})\bigr)} \nonumber \\
%& \label{eq:traju} \qquad \qquad  \qquad \qquad   \qquad   \qquad  \qquad   \qquad  \times \frac{\bm{\mathrm{e}}_{j + \theta} - \bm{\mathrm{e}}_j}{N} \,\mathcal{N}^j(\dd s, \dd \rho, \dd \theta),\\
%    \bm{V}^{M,N}(t)& = \bm{V}^{M,N}(0) + \int_0^t\hspace{-1ex}\int_{\RR_+ \times \{-1,1\}} \sum_{j=1}^M \1_{\rho \leq M^2NV^{M,N}_j(s^{-}) \bigl(d_1 + a_{21} U^{M,N}_j(s^{-})\bigr)} \nonumber \\
% &\qquad \qquad   \qquad  \qquad   \qquad \qquad    \qquad  \qquad  \label{eq:traju2} \times \frac{\bm{\mathrm{e}}_{j + \theta} - \bm{\mathrm{e}}_j}{N} \,\mathcal{N}^j(\dd s, \dd \rho, \dd \theta).
%\end{align}

%and similarly for $\bm{V}^{M,N}$.
\subsection{The intermediate (semi-discrete) system}
To estimate the gap between the discrete stochastic process \eqref{eq:traj} and  the SKT system \eqref{eq:SKT:cons}, we are going to use a third system on which our asymptotic analysis will pivot
\begin{equation}
  \label{eq:SKTsemidis}
  \left\{                                      
  \begin{aligned}
    \frac{\mathrm{d}}{\mathrm{d} t} 
    \bm{u}^M(t) -\Delta_M ( d_1\bm{u}^M(t) + a_{12}\bm{u}^M(t)\odot \bm{v}^M(t) \bigr) &= 0, \\
    \vphantom{\int_0^t} \frac{\mathrm{d}}{\mathrm{d} t} \bm{v}^M(t) - \Delta_{M}\bigl( d_2\bm{v}^M(t) + a_{21}\bm{u}^M(t)\odot \bm{v}^M(t)\bigr) &= 0,
  \end{aligned}
  \right.                                
\end{equation}
where the unknowns are the vector valued curves $\bm{u}^M,\bm{v}^M\colon \RR_+\to\RR^M$, and the matrix $\Delta_M$ is the periodic laplacian matrix, that is
  \begin{equation}
    \label{eq:DeltaM}
  \Delta_M\coloneqq M^2
  \begin{pmatrix}
    -2 & 1 & 0 &\cdots& 1 \\
    1 & -2 & 1 &\cdots& 0\\
    \vdots & \ddots &\ddots & \ddots &\vdots \\
    0 & \cdots & 1 & -2 & 1 \\
    1 & \cdots & 0 & 1 & -2
  \end{pmatrix}\in\textnormal{M}_M(\RR).
\end{equation}
This semi-discrete system corresponds to a large population approximation but fixed number of sites $M$.
Existence and uniqueness for \eqref{eq:SKTsemidis} can be proven using the standard Picard-Lindelöf theorem, as this is done in \cite{DDD2019} where this semi-discrete system has been introduced. 

\subsection{Statements}

% \begin{align}
%   \label{eq:defWMN}
%   \bm{\mathcal W}^{M,N}(t)&\coloneqq \Big(\|\bm{U}^{M,N}(t)-\bm{u}^M(t)\|_{2,M}^2 + \|\bm{V}^{M,N}(t)-\bm{v}^M(t)\|_{2,M}^2 \Big)^{1/2}\\
%   \nonumber &= \left(\frac{1}{M}\sum_{i=1}^M  (U^{M,N}_i(t)-u^{M}_i(t))^2+
%   \frac{1}{M}\sum_{i=1}^M  (V^{M,N}_i(t)-v^{M}_i(t))^2\right)^{1/2}.
% \end{align}

% Note that this distance function is not deterministic, as $\bm{U}^{M,N}$ and $\bm{V}^{M,N}$ are both random variables. 
% % \bigl\|\bm{ U}^{M,N}(t)-\bm{u}^{M}(t) \bigr\|_2^2 +  \bigl\|\bm{V}^{M,N}(t)-\bm{v}^{M}(t) \bigr\|_2^2.$$

% For this statement, we assume that the initial configuration is a.s. of order $M$ in $L^1$ norm, which propagates in time by conservation of mass and provides a rough uniform bound on the number of individuals per site.

% \begin{prop}\label{prop:gen} We assume
% that  there exists $C_0>0$ such that   for any $M\geq 1$ and $N\geq 1$,
% $$\max(\|\bm{U}^{M,N}(0)\|_{1,M}, \|\bm{V}^{M,N}(0)\|_{1,M},\|\bm{u}^{M}(0)\|_{1,M}, \|\bm{v}^{M}(0)\|_{1,M})\leq C_0 \qquad \text{ a.s. }$$
% Then there exists $C>0$ such that for any $M,N\geq 1$ and $T\geq 0$,
% $$
%     \EE\biggl( \sup_{t\in[0,T]}  \bm{\mathcal W}^{M,N}(t)^2\biggr) 
%  \leq  \left(\EE\left(\bm{\mathcal W}^{M,N}(0)^2\right)
%  +CT \frac{M^4}{N}\right) e^{C M^5 T}.
% $$
% \end{prop}

Our first main result is a stability estimate for the conservative SKT system \eqref{eq:SKT:cons}. As far as our knowledge goes, this result is  new in the context of weak solutions for the SKT system. To measure the distance between two solutions on a time interval $[0,T]$, we use the norm defined in \eqref{eq:tripnorm:det} in the deterministic setting, see Section~\ref{subsec:not}. We define also the affine functions $\mu_i\colon\RR\to\RR$ for $i=1,2$, by $\mu_i(x)\coloneqq d_i+a_{ij}x$ with $\{i,j\}=\{1,2\}$.
\begin{thm}\label{thm:sktstab}
Let $T>0$ and consider a couple $(u,v)\in L^\infty(Q_T)^2$ and $(\overline{u},\overline{v})\in L^\infty(Q_T)^2$ of non-negative bounded weak solutions of the SKT system \eqref{eq:SKT:cons}, respectively initialized by $(u_0,v_0)\in L^\infty(\T^d)^2$ and $(\overline{u}_0,\overline{v}_0)\in L^\infty(\T^d)^2$.  If the following condition 
 \begin{align}\label{ineq:small}
   \|u\|_{ L^\infty(Q_T)}\|v\|_{ L^\infty(Q_T)} <  \frac{d_1 d_2}{a_{12}a_{21}},
   \end{align}
is satisfied, then we have the stability estimate
\begin{equation*}
  \begin{split}
  \vertiii{u-\overline{u}}_T^2+\vertiii{v-\overline{v}}_T^2 &\lesssim
  \|u_0-\overline{u}_0\|_{H^{-1}(\T^d)}^2 + \|v_0-\overline{v}_0\|_{H^{-1}(\T^d)}^2 \\
  &\qquad + T\Big([u_0-\overline{u}_0]_{\T^d}^2 \| \mu_1(\overline{v}_0)\|_{ L^1(\T^d)}  + [v_0-\overline{v}_0]_{\T^d}^2 \|\mu_2(\overline{u}_0)\|_{ L^1(\T^d)}\Big),
  \end{split}
\end{equation*}
where the constant behind $\lesssim$ depends only on $a_{ij},d_i,\|u\|_{L^\infty(Q_T)},\|v\|_{L^\infty(Q_T)}$, and $\vertiii{\cdot}_T$ is defined by \eqref{eq:tripnorm:det}. In particular, if a bounded non-negative solution satisfies \eqref{ineq:small} then, there is no other bounded non-negative solution sharing the same initial data. 
\end{thm}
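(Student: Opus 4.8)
The plan is to linearise the difference of the two solutions. Setting $w\coloneqq u-\overline u$ and $z\coloneqq v-\overline v$ and using $uv-\overline u\,\overline v=v\,w+\overline u\,z$ in the first line of \eqref{eq:SKT:cons} and $uv-\overline u\,\overline v=u\,z+\overline v\,w$ in the second, subtraction gives
\begin{align*}
  \partial_t w-\Delta\!\big(\mu_1(v)\,w\big) &= a_{12}\,\Delta(\overline u\,z), & \partial_t z-\Delta\!\big(\mu_2(u)\,z\big) &= a_{21}\,\Delta(\overline v\,w).
\end{align*}
The coefficients are bounded (the solutions are) and $\mu_1(v)\geq d_1>0$, $\mu_2(u)\geq d_2>0$ since $u,v\geq0$. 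Testing the weak formulation against space-independent functions shows the means are conserved, $\meand{w(t)}=\meand{w_0}$ and $\meand{z(t)}=\meand{z_0}$ for all $t$, hence also $\int_{\T^d}\mu_1(v(t))=\int_{\T^d}\mu_1(v_0)=\|\mu_1(v_0)\|_{L^1(\T^d)}$ and likewise for $\mu_2(u)$. I would then split $w=\widetilde w+\meand{w_0}$ and $z=\widetilde z+\meand{z_0}$ into (constant) mean and zero-mean parts; $\widetilde w$ then solves the same equation with an extra source $\Delta(\meand{w_0}\mu_1(v))$, i.e.\
\[
  \partial_t\widetilde w-\Delta(\mu_1(v)\widetilde w)=\Delta\!\big(\meand{w_0}\mu_1(v)+a_{12}\overline u\,\widetilde z+a_{12}\meand{z_0}\,\overline u\big),
\]
and symmetrically for $\widetilde z$.

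\textbf{Step 2: duality estimate for a single equation.} For each scalar equation I would run a duality argument. Given $\psi\in L^2(Q_T)$ (resp.\ a zero-mean $\xi$ with $\|\nabla\xi\|_{L^2}\leq1$), solve the backward dual problem $-\partial_s\phi-\mu_1(v)\Delta\phi=\psi$ on $[0,T]$ with $\phi(T)=0$ (resp.\ $-\partial_s\phi-\mu_1(v)\Delta\phi=0$ on $[0,t]$ with $\phi(t)=\xi$); since $\mu_1(v)$ is only $L^\infty$ and coercive, this requires regularising the coefficient and passing to the limit, which is exactly what the duality lemma referred to in the statement provides. Testing the dual equation by $-\Delta\phi$ gives the a priori bounds $\|\nabla\phi(0)\|_{L^2}^2+\int_{Q_T}\mu_1(v)|\Delta\phi|^2\lesssim d_1^{-1}\|\psi\|_{L^2(Q_T)}^2$ (resp.\ $\lesssim1$), hence $\|\Delta\phi\|_{L^2(Q_T)}\lesssim d_1^{-1}\|\psi\|_{L^2(Q_T)}$ (resp.\ $\lesssim1$) because $\mu_1(v)\geq d_1$; notably no upper bound on $\mu_1(v)$ enters these estimates. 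Pairing the $\widetilde w$-equation against $\phi$ collapses the parabolic part into boundary terms, leaving $\int_{Q_T}\widetilde w\,\psi=\langle\widetilde w_0,\phi(0)\rangle+\int_{Q_T}\!\big(\meand{w_0}\mu_1(v)+a_{12}\overline u\,\widetilde z+a_{12}\meand{z_0}\overline u\big)\Delta\phi$ (resp.\ the $[0,t]$ version with $\int_{\T^d}\widetilde w(t)\xi$ on the left). The first term is $\leq\|\widetilde w_0\|_{\dot H^{-1}}\|\nabla\phi(0)\|_{L^2}$; the mean-defect term is handled by the \emph{weighted} Cauchy--Schwarz $\big|\int_{Q_T}\mu_1(v)\Delta\phi\big|\leq\big(\int_{Q_T}\mu_1(v)\big)^{1/2}\big(\int_{Q_T}\mu_1(v)|\Delta\phi|^2\big)^{1/2}$ together with $\int_{Q_T}\mu_1(v)=T\|\mu_1(v_0)\|_{L^1}$; the $\overline u$-terms are controlled via $\|\overline u\|_{L^\infty(Q_T)}$. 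Taking suprema over $\psi$ and $\xi$ gives, with constants depending only on $d_1,a_{12},\|\overline u\|_{L^\infty(Q_T)}$,
\begin{align*}
  \text{(i)}\quad \|\widetilde w\|_{L^2(Q_T)}&\leq\frac{a_{12}}{d_1}\|\overline u\|_{L^\infty(Q_T)}\,\|\widetilde z\|_{L^2(Q_T)}+R_w, \\
  \text{(ii)}\quad \|\widetilde w\|_{L^\infty([0,T];\dot H^{-1}(\T^d))}&\lesssim\|\overline u\|_{L^\infty(Q_T)}\,\|\widetilde z\|_{L^2(Q_T)}+R_w,
\end{align*}
where $R_w^2\lesssim\|w_0\|_{H^{-1}}^2+\|z_0\|_{H^{-1}}^2+T\big(\meand{w_0}^2\|\mu_1(v_0)\|_{L^1}+\meand{z_0}^2\|\mu_2(u_0)\|_{L^1}\big)$ (using $\|\mu_2(u_0)\|_{L^1}\geq d_2$ to absorb the $\meand{z_0}\overline u$ contribution). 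The symmetric estimates hold for $\widetilde z$ with $1\leftrightarrow2$, $u\leftrightarrow v$, $w\leftrightarrow z$.

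\textbf{Step 3: closing the loop, and uniqueness.} The key is to use \emph{only} the two sharp $L^2(Q_T)$ inequalities (i) first: substituting the one for $\widetilde z$ into the one for $\widetilde w$ yields
\[
  \Big(1-\frac{a_{12}a_{21}}{d_1d_2}\|\overline u\|_{L^\infty(Q_T)}\|\overline v\|_{L^\infty(Q_T)}\Big)\|\widetilde w\|_{L^2(Q_T)}\leq R_w+\frac{a_{12}}{d_1}\|\overline u\|_{L^\infty(Q_T)}R_z,
\]
and the smallness hypothesis \eqref{ineq:small} is precisely what makes the left factor positive; hence $\|\widetilde w\|_{L^2(Q_T)},\|\widetilde z\|_{L^2(Q_T)}\lesssim R_w+R_z$. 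Feeding these now-known quantities into (ii) (no longer circular) controls $\vertiii{\widetilde w}_T,\vertiii{\widetilde z}_T$ by $R_w+R_z$. Finally one restores the means: since $\meand{w(t)}\equiv\meand{w_0}$, one has $\vertiii{w}_T^2\leq(1+T)\meand{w_0}^2+\vertiii{\widetilde w}_T^2$ and $(1+T)\meand{w_0}^2\lesssim\|w_0\|_{H^{-1}}^2+T\meand{w_0}^2\|\mu_1(v_0)\|_{L^1}$, because $\meand{w_0}^2\leq\|w_0\|_{H^{-1}}^2$ and $\|\mu_1(v_0)\|_{L^1}\geq d_1$; summing the bounds for $w$ and $z$ gives the claimed stability estimate. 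The last assertion is then immediate: if $u_0=\overline u_0$ and $v_0=\overline v_0$ then $w_0=z_0=0$, so the whole right-hand side vanishes, whence $\vertiii{u-\overline u}_T=\vertiii{v-\overline v}_T=0$; in particular $\|u-\overline u\|_{L^2(Q_T)}=\|v-\overline v\|_{L^2(Q_T)}=0$, and since $T>0$ is arbitrary, $u=\overline u$ and $v=\overline v$.

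\textbf{Main obstacle.} The delicate point is the \emph{sharp} constant: the self-improvement in Step 3 must close under \emph{exactly} \eqref{ineq:small}, which is why one has to single out the $L^2(Q_T)$ component of $\vertiii{\cdot}_T$ and bootstrap, rather than estimating $\vertiii{\cdot}_T$ in one shot (that would couple the $d_i^{-1}$ factor coming from $\|\Delta\phi\|_{L^2}$ with the better factor from the homogeneous dual problem and would degrade the threshold). The second technical hurdle is making rigorous the dual parabolic problem with a merely bounded coercive coefficient, and extracting the weight $\sqrt{T\|\mu_1(v_0)\|_{L^1}}$ on the mean defect through the $\mu_1(v)$-weighted Cauchy--Schwarz combined with $L^1$ mass conservation — both of which are presumably the substance of the ``new duality lemma'' announced in the statement.
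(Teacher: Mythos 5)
Your proof is correct, and it reaches exactly the paper's threshold \eqref{ineq:small}, but it runs the duality machinery along a somewhat different route. The paper does the same linearisation as your Step 1, but then invokes a single forward duality lemma (Lemma~\ref{lem:duamodbis}) for the Kolmogorov equation with a singular source $\Delta f$: multiplying the equation by $\phi$ with $-\Delta\phi = z-\meand{z_0}$ (and using that the mean of $z$ is conserved) it obtains \emph{simultaneously} the uniform-in-time $H^{-1}$ control and the $\mu$-weighted $L^2$ control, with the source contributing $\alpha^{-1}\int_{Q_T}f^2$; applying this to both species and substituting one inequality into the other, the weighted $L^2$ terms absorb under \eqref{ineq:small} in one shot, the $H^{-1}$ terms simply sitting on the left. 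You instead split off the conserved means explicitly (turning $\meand{w_0}\mu_1(v)$ into a source), solve a backward Pierre-type dual problem per equation, take suprema over test functions to get separate $L^2(Q_T)$ and $\dot H^{-1}$ inequalities, and close the $L^2$ ones first before feeding them into the $H^{-1}$ ones. The two implementations are equivalent in substance — your weighted Cauchy--Schwarz on $\int\mu_1(v)\Delta\phi$ plus mass conservation produces the same $T\meand{w_0}^2\|\mu_1(v_0)\|_{L^1}$ term that the paper gets from the $\meand{z_0}^2\int_{Q_T}\mu$ term of its lemma — and the absorption condition is identical in both. One clarification on your ``main obstacle'': the bootstrap is not forced by sharpness; in the paper's one-shot closure no degradation of the threshold occurs precisely because only the weighted $L^2$ quantities appear on the right-hand side of the coupled inequalities, the $H^{-1}$ pieces being dropped before substitution. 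Finally, the rigor point you defer (backward dual problem with a merely bounded, coercive coefficient) is handled in your scheme by regularising $\mu_1(v)$ and controlling the commutator $\int\widetilde w(\mu_n-\mu_1(v))\Delta\phi_n$, which is harmless here since $\widetilde w\in L^\infty(Q_T)$; the paper avoids this by approximating data and coefficient in the \emph{forward} problem and using uniqueness of dual solutions, so your appeal to ``the duality lemma referred to in the statement'' should be replaced by this standard regularisation argument to avoid circularity.
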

\begin{rmk}
In case of equality in the smallness condition \eqref{ineq:small}, uniqueness remains but the stability estimate controls only the $H^{-1}$ part of the $\vertiii{\cdot}_T$ norm. 
                                            \end{rmk}
The proof of Theorem~\ref{thm:sktstab} relies on a generalized duality lemma presented in Subsection~\ref{subsec:contset} and on the concept of \emph{dual solutions} developed in \cite{moussa_nonloc_tri}, for the Kolmogorov equation. The uniqueness result contained in Theorem~\ref{thm:sktstab} is conditional: \emph{if} there exists a bounded (non-negative) solution $(\overline{u},\overline{v})$ satisfying \eqref{ineq:small}, then it is unique in the class of bounded weak solutions. The existence of \emph{global} bounded solutions for the SKT system is a long standing challenge in the context of cross-diffusion systems. Partial results are known, in the wake of the quest of even more regular solutions (which are in particular bounded), like \cite{HoanNguPha} or \cite{louwin} that we already cited. In the weak solutions setting, the paper \cite{jungzamp} gives sufficient --yet restrictive-- conditions on the coefficients of the SKT system to ensure boundedness. Since the previous results are rather constraining on the coefficients, we prefer to rely on Amann's theory \cite{amann90,amann90b} and understand Theorem~\ref{thm:sktstab} as a \emph{local} result which holds for initial data satisfying \eqref{ineq:small}. However, we emphasize that the condition \eqref{ineq:small} is considerably less restrictive that the standard perturbative assumptions considered for cross-diffusion systems and we call it for this reason a \emph{non-perturbative smallness condition}. This condition does not apply to both species but only on the product of the densities: one of the two functions $u$ and $v$ can be huge. Secondly, the stability and uniqueness result contained in Theorem~\ref{thm:sktstab} is not of a "weak-strong" type: both solutions are weak (only bounded, no \emph{a priori} assumptions on the spatial derivatives) which is, as far as our knowledge goes, a substantial step in the analysis of cross-diffusion systems. Indeed, because of the stiffness of those systems a common strategy to recover a well-posedness result is to impose on the coefficients or the solution itself a constraint ensuring that the total system is uniformly elliptic in the sense that it can be written as $\partial_t U - \textnormal{div}(A(U)\nabla U) =0$ with a diffusion matrix $A(U)$ uniformly positive, that is satisfying $\langle A(U)X,X\rangle\gtrsim\|X\|_2^2$ pointwisely for $X\in\RR^2$. In our case a direct computation shows that the matrix $A(U) = A(u,v)$ is \[A(u,v)=\begin{pmatrix} d_1 +a_{12}v & a_{21}u\\a_{12}v & d_2+a_{21}u\end{pmatrix}.\] For non-negative densities $u$ and $v$, the trace of the previous matrix field is positive, so its positiveness (as a quadratic form) is equivalent to $\det(A(U)+A(U)^T)>0$, that is $4(d_1+a_{12}v)(d_2+a_{21}u) \geq (a_{21}u  + a_{12}v)^2$. Since this inequality is trivially true for $(u,v)=0$, the previous computation paves the way to well-posedness results for small enough densities or strong enough self-diffusion w.r.t. the cross-diffusion coefficients (see \emph{e.g.} \cite{berbur,pham_temam_unique,chen_jun_unique_bis}). In all these results, the setting in which the solution are built is in fact strongly elliptic and in the best case, a weak-strong uniqueness result is obtained (see \emph{e.g.} \cite{berbur}). In our case, the condition \eqref{ineq:small} does not ensure strong ellipticity for the system: $v$ could be very small and $u$ very large and still we could have $4(d_1+a_{12}v)(d_2+a_{21} u) < (a_{21} u  + a_{12} v)^2$. In particular, our stability result is of weak-weak type. As the proof of Theorem~\ref{thm:sktstab} (which is done in Subsection~\ref{subsec:contset}) is totally insensitive to the dimension $d$, it is here stated in full generality. However, the remaining part of the paper is sensitive to the dimension and   will focus on the case $d=1$.
It deals with the approximation of the SKT system by stochastic processes.\\

Before stating our second main result, let us comment briefly  Section~\ref{sec:compactness} in which we propose a first estimates of the gap between the stochastic process defined by \eqref{eq:traj} and the semi-discrete system \eqref{eq:SKTsemidis} on a fixed interval $[0,T]$. The methodology at stake in this paragraph, which is quite rough, allows for asymptotic quadratic closeness between these two objects, \emph{provided that}, as $N,M \rightarrow +\infty$, we have the following 
\begin{equation}
\label{scales}
N \gg M^4 \exp(cM^4T),
\end{equation}
where $c$ is some  constant which will become more explicit in the next section. Combining this fact with the compactness result \cite[Theorem 8]{DDD2019}, we obtain convergence (up to a subsequence) of our stochastic process towards a weak solution of the SKT system. These estimates and convergence yield first results which are    general  in terms of parameters and form of the solution.
However, the limitations of  this approach are twofold. First, the scaling condition \eqref{scales} involves a superexponential and time dependent number of individuals per site in order to make  to be able to sum local estimates. As we will see, and as we can guess from the form of quadratic variations, this scaling is too restrictive for convergence.
Second, this approach necessitates a self-diffusion term in the limiting system  in order to use the compactness result of \cite{DDD2019}. Indeed, self diffusion term tends  to regularize the solution.

We develop then a different approach, based on the discrete translation of Theorem~\ref{thm:sktstab}. This alternative method does not rely on \cite{DDD2019}, so that self-diffusion is not needed in the system. The convergence result is obtained by means of a quantitative estimate which bounds the expectation of the $\vertiii{\cdot}_T$-norm
of the gap  between the stochastic processes and the solution of the SKT system. In particular, there are no compactness tools used and the entropy of the system is not needed. %(this is a crucial difference with \cite{DDD2019}).
Convergence is then guaranteed only with a quadratic number of individuals per site. This corresponds to the expected scaling for having local control of the stochastic process by its semi-discrete approximation, since beyond this scaling quadratic variations do not vanish. The main disadvantage of this new method is that, like for Theorem~\ref{thm:sktstab}, it needs the existence of a bounded solution satisfying condition \eqref{ineq:small}.

In order to state the following result, we need to introduce, for any integer $M\geq 1$, the discretization of the flat (one dimensional) torus $\T$
\begin{align}\label{eq:discT}
\T_M\coloneqq\{x_1,x_2,\cdots,x_M\}, \quad \text{with $x_k =\frac{k}{M}$, for $1\leq k\leq M$}.
\end{align}
Given a vector $\bm{u}\in\RR^M$, classically there exists exactly one continuous piecewise linear function defined on $\T$ for which its value on each point $x_k$ of $\T_M$ is given by $u_k$; we denote this function $\pi_M(\bm{u})$. We adapt the same notation if instead of $\bm{u}$ one considers a vector valued map $\bm{U}$ (which could depend on the event $\omega$ or the time $t$ for instance), so that $\pi_M(\bm{U})$ becomes a real-valued map.\\
This time, to measure the distance between those random functions we use the probabilistic version of the distance introduced in Subsection~\ref{subsec:not}, that is \eqref{eq:tripnorm}.
%timBesides, 
%given a process $\bm{Z} \colon \Omega \times [0,T]\to \mathbb R^M$ defined on a %probability space 
%$(\Omega, \mathcal F, \mathbb P)$, we introduce the following norm:
%\begin{align}
%\label{defnormechelou}
%  \vertiii{\bm{Z}}_{T,M} \coloneqq \left(\sup_{t\in[0,T]}\EE\left(\|\bm{Z}(t)\|_{-1,M}^2\right) + \EE\left(\Vert\sigma_M (\bm{Z}) \Vert_{L^2(Q_T)}^2 \right)\right)^{1/2}.
%\end{align}
\begin{thm}\label{thm:quantitativeresult}
  Let $T>0$. In the one dimensional case $d=1$, assume the existence of a non-negative solution $u,v$ belonging to $L^\infty(Q_T)\cap L^2(0,T;H^3(\T))$ of the system \eqref{eq:SKT:cons}, initialized by $u_0,v_0$ in $L^\infty\cap H^3(\T)$ and satisfying the assumption \eqref{ineq:small}. Consider the stochastic processes $(\bm{U}^{M,N},\bm{V}^{M,N})$ defined by \eqref{eq:traj} and assume the existence of $\C_0$ such that   for all $M,N\in\NN$,
  \begin{align}
    \label{condinitborn}
    \|\bm{U}^{M,N}(0)\|_{1,M}+\|\bm{V}^{M,N}(0)\|_{1,M} \leq \C_0, \quad \text{almost surely.}
  \end{align}
 Then, there exists a sequence $(\delta_M)_M\in\mathbb{R}_{>0}^{\mathbb{N}}$ converging to $0$ and a constant $\textnormal{D}>0$ such that for any $(M,N)\in \NN^2$ satisfying $N \geq M^2 \textnormal{D}$, there holds
  %we have the following estimate
  %\begin{align}
   % \label{ineq:ibelieveicancry}  
    %&\vertiii{ \pi_M\bigl(\bm{U}^{M,N}\bigr) - \overline{u}}_{T,M}^2+\vertiii{ \pi_M\bigl(\bm{V}^{M,N}\bigr) - \overline{v}}_{T,M}^2\\
    %&\ \, \lesssim  \EE\Big[\|\pi_M\bigl(\bm{U}^{M,N}(0)\bigr) - \overline{u}_0\|_{H^{-1}(\T)}^2+\|\pi_M\bigl(\bm{V}^{M,N}(0)\bigr) - \overline{v}_0\|_{H^{-1}(\T)}^2\Big]   + M^{-1}+ \frac{M^2}{N}, \notag 
  %\end{align}
  \begin{align}
    \label{ineq:ibelieveicancry}  
    &\vertiii{ \pi_M\bigl(\bm{U}^{M,N}\bigr) - u}_{T}^2+\vertiii{ \pi_M\bigl(\bm{V}^{M,N}\bigr) - v}_{T}^2\\
    &\ \, \lesssim  \EE\Big[\|\pi_M\bigl(\bm{U}^{M,N}(0)\bigr) - u_0\|_{H^{-1}(\T)}^2+\|\pi_M\bigl(\bm{V}^{M,N}(0)\bigr) - v_0\|_{H^{-1}(\T)}^2\Big]   + \delta_M+ \frac{M^2}{N}, \notag 
  \end{align}
  where $\vertiii{\cdot}_T$ is defined by \eqref{eq:tripnorm} and 
  the symbol $\lesssim$ and the constant $\textnormal{D}$ depend (only) on $\C_0,T$, $d_i,a_{ij},\|u\|_{L^\infty(Q_T)},\|v\|_{L^\infty(Q_T)}$, while the sequence $(\delta_M)_M$ depends only on the solution $u,v$.
\end{thm}
\begin{rmk}
If the solution $u,v$ is assumed to be more regular, the convergence of $(\delta_M)_M$ can be estimated more accurately. See Remark~\ref{rem:delta} for more details. Also, $L^2(0,T;H^3(\T))$ is not optimal and could be replaced  by $L^2(0,T;H^{2+s}(\T))$ for any $s>1/2$.
\end{rmk}
This immediately implies the following convergence for the $\vertiii{\cdot}_T$-norm.
\begin{cor}
Let $T>0$. Under the assumptions of Theorem \ref{thm:quantitativeresult}, consider an extraction function $\phi\colon\NN\to\NN$ such that $M^2=\textnormal{o}(\phi(M))$.  If the initial positions of the individuals are well-prepared in the sense that
\begin{align*}
\EE\Big[\|\pi_M\bigl(\bm{U}^{M,\phi(M)}(0)\bigr) - u_0\|_{H^{-1}(\T)}^2+\|\pi_M\bigl(\bm{V}^{M,\phi(M)}(0)\bigr)-v_0\|_{H^{-1}(\T)}^2\Big] \operatorname*{\longrightarrow}_{M\rightarrow +\infty} 0,
\end{align*}
then we have 
\[
  \lim_{M\rightarrow \infty} \vertiii{ \pi_M\bigl(\bm{U}^{M,\phi(M)}\bigr) - u}_{T}^2+\vertiii{ \pi_M\bigl(\bm{V}^{M,\phi(M)}\bigr) - v}_{T}^2=0,
\]
where $\vertiii{\cdot}_T$ is defined by \eqref{eq:tripnorm}.
%that $(\pi_M(\bm{U}^{M,\phi(M)}),\pi_M(\bm{V}^{M,\phi(M)}))_M$ converges in expectation towards $(\overline{u},\overline{v})$, in the sense of the $\vertiii{\cdot}_T$ norm.
  \end{cor}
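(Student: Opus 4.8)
The plan is to derive the Corollary directly from Theorem~\ref{thm:quantitativeresult} by specializing the second index $N$ to the value $\phi(M)$ along the chosen extraction. Since $M^2 = \textnormal{o}(\phi(M))$, the ratio $\phi(M)/M^2 \to +\infty$, so in particular $N = \phi(M)/M^2$ is eventually large enough for the hypothesis of Theorem~\ref{thm:quantitativeresult} to apply; hence for all $M$ large we may write the quantitative estimate \eqref{ineq:ibelieveicancry} with $N$ replaced by $\phi(M)$. This gives, for each fixed $T>0$,
\begin{align*}
  &\EE\Big[\vertiii{ \pi_M\bigl(\bm{U}^{M,\phi(M)}\bigr) - \overline{u}}_T^2+\vertiii{ \pi_M\bigl(\bm{V}^{M,\phi(M)}\bigr) - \overline{v}}_T^2\Big] \\
  &\qquad \lesssim \EE\Big[\|\pi_M\bigl(\bm{U}^{M,\phi(M)}(0)\bigr) - \overline{u}_0\|_{H^{-1}(\T)}^2+\|\pi_M\bigl(\bm{V}^{M,\phi(M)}(0)\bigr) - \overline{v}_0\|_{H^{-1}(\T)}^2\Big] + M^{-4} + \frac{M^2}{\phi(M)},
\end{align*}
with an implicit constant that is independent of $M$ (it depends only on $\C_0$, $T$, and the coefficients and $L^\infty$ norms of $\overline{u},\overline{v}$, none of which depend on $M$).

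Next I would let $M\to+\infty$ on the right-hand side and check that all three terms vanish. The first term is exactly the well-preparedness hypothesis of the Corollary, which is assumed to tend to $0$. The second term $M^{-4}$ trivially tends to $0$. The third term $M^2/\phi(M)$ tends to $0$ precisely because $M^2 = \textnormal{o}(\phi(M))$. Since the left-hand side is non-negative and bounded above by a quantity tending to $0$, a squeeze argument yields
\[
  \lim_{M\to\infty}\EE\Big[\vertiii{ \pi_M\bigl(\bm{U}^{M,\phi(M)}\bigr) - \overline{u}}_T^2+\vertiii{ \pi_M\bigl(\bm{V}^{M,\phi(M)}\bigr) - \overline{v}}_T^2\Big] = 0,
\]
which is the claimed convergence.

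There is essentially no obstacle here: the Corollary is a direct consequence of the theorem, and the only minor point to be careful about is that the implicit constant in \eqref{ineq:ibelieveicancry} does not depend on $M$ or $N$ (so that it survives the limit), and that "$N/M^2$ large enough" is indeed satisfied for $M$ large along the sequence $N = \phi(M)$ — both of which are immediate from the statement of Theorem~\ref{thm:quantitativeresult} and the growth assumption $M^2 = \textnormal{o}(\phi(M))$. One could also note that the conclusion holds for every fixed $T>0$ because the estimate \eqref{ineq:ibelieveicancry} is valid for every $T>0$ with a constant depending on $T$ but not on $M,N$.
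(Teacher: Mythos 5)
Your proof is correct and follows exactly the route the paper intends: the corollary is stated as an immediate consequence of Theorem~\ref{thm:quantitativeresult}, obtained by taking $N=\phi(M)$, noting that $N/M^2=\phi(M)/M^2\to+\infty$ so the hypothesis is eventually satisfied, and letting $M\to\infty$ so that the well-preparedness term, $M^{-4}$ and $M^2/\phi(M)$ all vanish. (Only a slip of phrasing: you wrote ``$N=\phi(M)/M^2$ is eventually large enough'' where you clearly mean the ratio $N/M^2=\phi(M)/M^2$ is large.)
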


\medskip
 
 Let's end up with other perspectives  and extensions we have in mind. 
 
 \medskip
 
We considered in this work periodic boundary conditions since the domain of study is the flat torus (be it in dimension $1$ or more). For Theorem~\ref{thm:sktstab}, our method of proof relies on fine energy estimates involving negative Sobolev and quadratic norms.  There is no doubt that the method of proof we introduce can be adapted without much difficulties to boundary conditions that are more frequently used in the population dynamics (as homogeneous Dirichlet or Neumann for instance). For the description of the stochastic individual based model and Theorem~\ref{thm:quantitativeresult}, it amounts to kill the individuals hitting the boundary (for homogenous Dirichlet boundary condition) or reflecting the motion by authorizing only jumps that remain inside the domain (in the case of Neumann boundary condition). This would be an interesting extension of our work. It would lead to additional  technical difficulties, but we do not see any major issue in the application of our method.

\medskip

The approach we have developped in this work differs from  more classical techniques relying on reversibility property or the existence of suitable Lyapunov functional. 
This point of view allows to get convergence in a strong sense. This ensures that the number of individuals of the stochastic process on a given site is well approximated by the limiting SKT system. Moreover, we expect that this approach can be extended in several directions and could be use for more sophisticated models.
An extension for which we are rather confident is the generalization of our asymptotic analysis to higher dimension. An upper limit  is fixed by the avatar of the Bramble-Hilbert lemma, which is Lemma \ref{lem:bh}. This latter  demands a Sobolev embedding $H^2(\T^d) \hookrightarrow \mathscr{C}^0(\T^d)$, which holds only for $d=1,2,3$. On the other hand, keeping in mind that solutions of the system of PDEs represent a population density in an environment, the exploration of such system in dimensions greater than 4 loses some interest.  We thus believe that the analysis that we develop is  adaptable to dimensions 2 and 3. However, this seems to imply a technical cost. \\
Besides, we believe that birth or death of individuals can be included in our framework. This is relevant for modeling purposes and would add a reaction term in the limiting system. Originally the SKT system was introduced because of its ability to produce segregated states. But these particular equilibria result from the interaction of the cross-diffusion rates \emph{and} the reaction rates (that we have chosen to neglect here) terms. \\
 Finally, we expect that our proofs can be also extended to
 more general  cross-diffusion terms  or self-diffusion.
In a nutshell, we believe that the main lines of our approach should work for various extensions and could lead to interesting future works.
%Another approach for future works would be to prove  $\ell^\infty$ estimates for the semi-discrete system such that it is independent of $M$. With this one could show that the semi-discrete system is not far from verifying the limiting equation, and from here evoke the continuous version of the duality estimates in order to quantify the convergence. 

%, for the reader's convenience 

\section{A first approach}\label{sec:compactness}

%In a first step, given $M\in\NN$, we are interested in the convergence when $N\to\infty$ of the sequence of renormalized process $(\bm{U}^{M,N},\bm{V}^{M,N})_{N \in \NN}$ taking values in the space $D([0,T],\RR^M\times\RR^M)$. To this end we start by giving a trajectorial construction of each component of the process at a given site $i\in\lbrace1,\dots,M\rbrace$. 
The trajectorial representation \eqref{eq:traj}  yields for each coordinate of $\bm{U}^{M,N}$
%\begin{multline}\label{eq:trajectory}
%  U_i^{M,N}(t) = U_i^{M,N}(0) + \frac{1}{N}\int_0^t \int_{\RR_+ \times \{-1,1\}} \sum_{j=1}^M  \1_{\rho\leq \eta_{1,j}^{M,N}(s^-)} \\ 
%  \times\bigl(\1_{j=i+1}\1_{\theta=-1} + \1_{j=i-1}\1_{\theta=1} - {\color{red}2} \1_{j=i}\bigr) \,\mathcal{N}^j(\dd s,\dd\rho, \dd\theta).
%\end{multline}
\begin{align}\label{eq:trajectory}
  U_i^{M,N}(t) &= U_i^{M,N}(0) - \frac{1}{N}\int_0^t \int_{\RR_+ \times \{-1,1\}} \1_{\rho\leq \eta_{1,i}^{M,N}(s^-)}\,\mathcal{N}^i(\dd s,\dd\rho, \dd\theta) \notag \\
  &\qquad +\frac{1}{N}\int_0^t \int_{\RR_+ \times \{-1,1\}} \1_{\rho\leq \eta_{1,i-1}^{M,N}(s^-)}\1_{\theta=1}\,\mathcal{N}^{i-1}(\dd s,\dd\rho, \dd\theta) \notag \\ 
  &\qquad\qquad + \frac{1}{N}\int_0^t \int_{\RR_+ \times \{-1,1\}} \1_{\rho\leq \eta_{1,i+1}^{M,N}(s^-)}\1_{\theta=-1} \,\mathcal{N}^{i+1}(\dd s,\dd\rho, \dd\theta).
\end{align}
%\begin{multline}\label{eq:trajectory}
%  U_i^{M,N}(t) = U_i^{M,N}(0) + \frac{1}{N}\int_0^t \int_{\RR_+ \times \{-1,1\}} \sum_{j=1}^M  \1_{\rho\leq M^2 NU_j^{M,N}(s-)(d_1 + a_{12}V_j^{M,N}(s-))} \\
%  \times\bigl(\1_{j=i+1}\1_{\theta=-1} + \1_{j=i-1}\1_{\theta=1} -2 \1_{j=i}\bigr) \,\mathcal{N}^j(\dd s,\dd\rho, \dd\theta).
%\end{multline}
%We can now identify the limiting values of the sequences of processes.

%\medskip

%For $M\in\NN^*$,we introduce the discrete periodic laplacian matrix  (size $M\times M$)
%\begin{equation*}
%  A_M\coloneqqM^2
%  \begin{pmatrix}
%    -2 & 1 & 0 &\cdots& 1 \\
%    1 & -2 & 1 &\ddots& 0\\
%    \vdots & \ddots &\ddots & \ddots &\vdots \\
%    0 & \cdots & 1 & -2 & 1 \\
%    1 & \cdots & 0 & 1 & -2
%  \end{pmatrix}\in\textnormal{M}_M(\RR).
%\end{equation*}
By compensating the 
 Poisson random measure, we obtain the following semimartingale decomposition (see Definition 4.1 in Chapter 2 in \cite{IW})
\begin{align}\label{eq:semimg_u}
  \bm{U}^{M,N}(t) &=\bm{A}^{M,N}(t)+\bm{\mathcal M}^{M,N}(t),
    % U_i^{M,N}(0) + \int_0^t M^2d_1\bigl(U_{i+1}^{M,N}(s) + U_{i-1}^{M,N}(s) - 2U_i^{M,N}(s)\bigr) \,\dd s\nonumber \\
 % & + \int_0^t M^2a_{12}\bigl(U_{i+1}^{M,N}(s)V_{i+1}^{M,N}(s) + U_{i-1}^{M,N}(s)V_{i-1}^{M,N}(s) - 2U_{i}^{M,N}(s)V_{i}^{M,N}(s)\bigr) \,\dd s + R^{M,N}_i(t),  \nonumber\\
%  &= U_i^{M,N}(0) + \int_0^t d_1\Delta_{h_M}U_i^{M,N}(s) \,\dd s + \int_0^t a_{12}\Delta_{h_M}\bigl(U_i^{M,N}(s)V_i^{M,N}(s)\bigr) \,\dd s + R^{M,N}_i(t),
\end{align}
where $\bm{A}^{M,N}=(A^{M,N}_i)_{1\leq i\leq M}$ is a  continuous process defined by
\[
  \bm{A}^{M,N}(t)= \bm{U}^{M,N}(0) + \int_0^t d_1\Delta_M\bm{U}^{M,N}(s) \,\dd s + \int_0^t a_{12}\Delta_{M}\bigl(\bm{U}^{M,N}(s) \odot \bm{V}^{M,N}(s)\bigr) \,\dd s,
\]
with $\Delta_M$ as defined in \eqref{eq:DeltaM}, and $\bm{\mathcal M}^{M,N}=(M_i^{M,N})_{1\leq i\leq M}$ is a martingale. More precisely, for any $1\leq i\leq M$, $\mathcal M_i^{M,N}$ is a square integrable martingale whose predictable quadratic variation is given for $t\geq 0$ by
\begin{align}\label{eq:predquad}
  \bigl\langle \mathcal M_i^{M,N} \bigr\rangle(t) &= \frac{M^2}{N}\int_0^td_1\Bigl(2U_i^{M,N}(s)+U_{i+1}^{M,N}(s)+U_{i-1}^{M,N}(s)\Bigr)\dd s \\
  & \hspace{-1.2em} + \frac{M^2}{N}\int_0^ta_{12}\Bigl(2U_i^{M,N}(s)V_i^{M,N}(s)+U_{i+1}^{M,N}(s)V_{i+1}^{M,N}(s) + U_{i-1}^{M,N}(s)V_{i-1}^{M,N}(s)\Bigr)\dd s.
  \nonumber
  \end{align}
  Then, there exists a constant $C$, which only depends on the diffusion coefficients, such that
\begin{align}  
\sum_{i=1}^M \bigl\langle \mathcal M_i^{M,N} \bigr\rangle(t)  &\leq  C\,  \frac{M^2}{N} \, \int_0^t \Bigl( \| \bm{U}^{M,N}(s)\|_1 + \|\bm{U}^{M,N}(s)\|_2^2 + \| \bm{V}^{M,N}(s)\|_2^2 \Bigr) \, \dd s. \label{bornVQ}
\end{align}
We refer to Chapters  1.6  in \cite{IW} for   the  definitions of martingales and to Chapter 2.2  for the more specific form of martingales appearing here.
The analogous decomposition holds for  the coordinates of $(\bm{V}^{M,N}(t))_{t\geq0}$, the second species.

Let us give first estimates of the gap between the stochastic process and its approximation in large population  for a fixed number of sites. Let
\[
  \bm{\mathcal U}^{M,N}(t)=\bm{U}^{M,N}(t) - \bm{u}^M(t), \quad \bm{\mathcal V}^{M,N}(t)=\bm{V}^{M,N}(t) - \bm{v}^M(t).
\]
\begin{prop}\label{prop:rateL2} We assume that there exists $C_0>0$ such that almost surely, for any $M,N\geq 1$,
$$\|\bm{U}^{M,N}(0)\|_{1,M}+\|\bm{V}^{M,N}(0)\|_{1,M}+ \|\bm{u}^{M}(0)\|_{1,M}+\|\bm{v}^{M}(0)\|_{1,M} \,  \leq \,  \C_0.$$
 Then, for any $T\geq 0$, there exist $c_1,c_2>0$ such that for any $M,N\geq 1$,
\begin{multline*}
  \EE\biggl( \sup_{t\in[0,T]} \bigl\|\bm{\mathcal U}^{M,N}(t) \bigr\|_{2,M}^2 + \sup_{t\in[0,T]}\bigl\|\bm{\mathcal V}^{M,N}(t) \bigr\|_{2,M}^2\biggr) \\ 
  \leq \biggl( \EE\Bigl(\bigl\|\bm{\mathcal U}^{M,N}(0)\bigr\|_{2,M}^2 + \bigl\|\bm{\mathcal V}^{M,N}(0) \bigr\|_{2,M}^2\Bigr) + c_1\biggl(\frac{M^2}{\sqrt{N}}+T\frac{M^3}{N}\biggr) \biggr) e^{c_2 M^4 T},
\end{multline*}
where $c_1$ and $c_2$ only depends on the diffusion parameters and the initial bounds.
\end{prop}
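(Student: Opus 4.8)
The plan is to work coordinate-wise with the semimartingale decomposition \eqref{eq:semimg_u} and subtract the semi-discrete ODE \eqref{eq:SKTsemidis}, then run a Gr\"onwall argument on $\EE\sup_{[0,t]}(\|\bm{\mathcal U}^{M,N}\|_{2,M}^2+\|\bm{\mathcal V}^{M,N}\|_{2,M}^2)$. First I would write, for each $i$,
\[
\mathcal U_i^{M,N}(t) = \mathcal U_i^{M,N}(0) + \int_0^t \Bigl[\bigl(\Delta_M(d_1\bm{U}^{M,N}+a_{12}\bm{U}^{M,N}\odot\bm{V}^{M,N})\bigr)_i - \bigl(\Delta_M(d_1\bm{u}^M+a_{12}\bm{u}^M\odot\bm{v}^M)\bigr)_i\Bigr]\dd s + \mathcal M_i^{M,N}(t),
\]
and expand the drift difference. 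The linear part contributes $d_1\Delta_M\bm{\mathcal U}^{M,N}$; the quadratic part is handled by the algebraic identity $\bm{U}\odot\bm{V}-\bm{u}\odot\bm{v} = \bm{\mathcal U}\odot\bm{V} + \bm{u}\odot\bm{\mathcal V}$, so that after applying $\Delta_M$ it splits into terms that are (discrete-Laplacian of) products of one "error" factor and one bounded factor. The key quantitative input here is that $\|\Delta_M\|_{2\to2}\lesssim M^2$ (spectral bound for the periodic Laplacian matrix) and that $\ell^1$-conservation plus the initial bound $\C_0$ gives $\ell^\infty$ control: $\|\bm{U}^{M,N}(t)\|_\infty \le \|\bm{U}^{M,N}(t)\|_1 = M\|\bm{U}^{M,N}(t)\|_{1,M} \le M\C_0$, and similarly for $\bm v^M$ (whose $\ell^1$ norm is also conserved by \eqref{eq:SKTsemidis} since $\1_M^\top\Delta_M=0$). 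This is exactly the "naive bound of the quadratic diffusion term" advertised in the introduction, and it is the source of the crude $M^4$ in the exponent.

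Next I would take the inner product of the integrated identity with $\bm{\mathcal U}^{M,N}(t)$ (rescaled, i.e. in $(\cdot|\cdot)_M$), or rather apply It\^o's formula to $\|\bm{\mathcal U}^{M,N}(t)\|_{2,M}^2$, to get
\[
\|\bm{\mathcal U}^{M,N}(t)\|_{2,M}^2 = \|\bm{\mathcal U}^{M,N}(0)\|_{2,M}^2 + 2\int_0^t (\text{drift difference}\mid\bm{\mathcal U}^{M,N}(s))_M\,\dd s + 2\int_0^t (\bm{\mathcal U}^{M,N}(s^-)\mid \dd\bm{\mathcal M}^{M,N}(s))_M + \frac{1}{M}\sum_i \langle\mathcal M_i^{M,N}\rangle(t).
\]
The drift inner product is bounded, using Cauchy--Schwarz, $\|\Delta_M\|\lesssim M^2$ and the $\ell^\infty$ bounds above, by $C(M^4 + \dots)\|\bm{\mathcal U}^{M,N}(s)\|_{2,M}^2$ plus a symmetric $\|\bm{\mathcal V}^{M,N}\|_{2,M}^2$ contribution (the cross term $\bm{u}^M\odot\bm{\mathcal V}^{M,N}$ couples the two equations, which is why they must be summed). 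The bracket term: from \eqref{eq:predquad}, $\langle\mathcal M_i^{M,N}\rangle(t)\le \frac{M^2}{N}\,C\int_0^t(\text{local masses})\,\dd s$, so $\frac1M\sum_i\langle\mathcal M_i^{M,N}\rangle(t) \le \frac{M^2}{N}C\int_0^t(\|\bm U^{M,N}(s)\|_{1,M} + \|\bm U^{M,N}\odot\bm V^{M,N}\|_{1,M})\dd s \le \frac{M^2}{N}\,C(\C_0)(1+M)\,t$, accounting for the $T M^3/N$ term; one can also absorb part into $T\frac{M^2}{\sqrt N}$ appearing in the exponent if one prefers, but the cleanest route keeps it in the additive prefactor. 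For the stochastic integral I would take $\sup_{[0,T]}$ and then expectation, and apply the Burkholder--Davis--Gundy inequality: $\EE\sup_{[0,t]}\bigl|\int_0^s(\bm{\mathcal U}^{M,N}\mid\dd\bm{\mathcal M}^{M,N})\bigr| \lesssim \EE\bigl(\int_0^t \frac1{M^2}\sum_i |\mathcal U_i^{M,N}(s)|^2 \dd\langle\mathcal M_i^{M,N}\rangle(s)\bigr)^{1/2} \lesssim \EE\bigl(\sup_{[0,t]}\|\bm{\mathcal U}^{M,N}\|_{2,M}^2 \cdot \frac{M^2}{N}\,C t\bigr)^{1/2}$, and then split by Young's inequality $ab\le \frac14 a^2 + b^2$ to absorb $\frac14\EE\sup_{[0,t]}\|\bm{\mathcal U}^{M,N}\|_{2,M}^2$ into the left-hand side, leaving a $\frac{M^2}{N}Ct$-type remainder (whose square root, being $\le \frac12(1+\frac{M^2}{N}Ct)$ or handled directly, yields the $\frac{M^2}{\sqrt N}$ scale).

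Finally, having reached an inequality of the form
\[
\EE\Bigl(\sup_{[0,t]}\|\bm{\mathcal U}^{M,N}\|_{2,M}^2 + \sup_{[0,t]}\|\bm{\mathcal V}^{M,N}\|_{2,M}^2\Bigr) \le \EE\bigl(\|\bm{\mathcal U}^{M,N}(0)\|_{2,M}^2+\|\bm{\mathcal V}^{M,N}(0)\|_{2,M}^2\bigr) + c_1\Bigl(\tfrac{M^2}{\sqrt N} + T\tfrac{M^3}{N}\Bigr) + c_2\bigl(M^4 + \tfrac{M^2}{\sqrt N}\bigr)\int_0^t \EE\sup_{[0,s]}(\cdots)\,\dd s,
\]
Gr\"onwall's lemma gives the stated bound with the factor $e^{c_2(M^4 + M^2/\sqrt N)T}$. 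The main obstacle I anticipate is bookkeeping rather than conceptual: one must carefully track that the BDG constant and the Young split do not reintroduce a dependence on $M,N$ in $c_1,c_2$ beyond what is claimed, and in particular that the stochastic-integral estimate produces the $M^2/\sqrt N$ scale (from $\sqrt{M^2/N}$) both as an additive term and, after Gr\"onwall, inside the exponential — the appearance of $M^2/\sqrt N$ rather than $M^2/N$ in the exponent is precisely the residue of applying BDG plus Young to the martingale term. The other delicate point is making sure the cross-coupling term $a_{12}\Delta_M(\bm u^M\odot\bm{\mathcal V}^{M,N})$ (and its analogue for the second species) is controlled symmetrically so that summing the two It\^o identities closes the Gr\"onwall loop; this uses only $\|\bm u^M\|_\infty, \|\bm v^M\|_\infty \le M\C_0$ and $\|\Delta_M\|\lesssim M^2$, contributing again to the $M^4$ in $c_2$.
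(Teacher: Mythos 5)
Your plan is essentially the paper's own proof: expand the squared $\ell^2$ gap of $\bm{\mathcal U}^{M,N},\bm{\mathcal V}^{M,N}$ along the semimartingale decomposition \eqref{eq:semimg_u} minus \eqref{eq:SKTsemidis}, control the quadratic drift by the crude mass-conservation bounds $\|\bm{U}^{M,N}(t)\|_\infty,\|\bm{v}^M(t)\|_\infty\leq M\C_0$, handle the martingale part with Doob/BDG using \eqref{eq:predquad}, and close with Gr\"onwall after summing the two species. The only genuine differences are technical: you bound the drift via the spectral bound $\|\Delta_M\|_{2\to2}\lesssim M^2$ and Cauchy--Schwarz, whereas the paper performs a discrete integration by parts with $\nabla_M^+$, drops the sign-definite term and applies Young (this is where its $M^4$ comes from; your route in fact yields only $M^3$, which is harmless since the statement is an upper bound); and you absorb the BDG term into the left-hand side by Young, whereas the paper uses the per-coordinate bound $\sqrt{x}\leq 1+x$, which is precisely how the $M^2/\sqrt{N}$ enters both the additive prefactor and the exponent in the stated estimate. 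Two small bookkeeping points: the It\^o correction is pathwise the optional variation (the paper's jump-counting term $\bm{R}^{M,N}$), not the predictable bracket, though this is immaterial once you take expectations; and your intermediate bound $\EE\bigl(\sup_{[0,t]}\|\bm{\mathcal U}^{M,N}\|_{2,M}^2\cdot\tfrac{M^2}{N}Ct\bigr)^{1/2}$ is too optimistic by a factor $M$ --- with the normalization $\tfrac{1}{M^2}\sum_i\int \mathcal U_i^2\,\dd\langle\mathcal M_i\rangle$ and the local rates of order $M^2(1+M)/N$ one gets $\tfrac{M^3}{N}t$ inside, not $\tfrac{M^2}{N}t$ --- but after your Young split the remainder is then $\lesssim T M^3/N$, which is exactly one of the allowed additive terms, so the claimed inequality (indeed a slightly stronger one, without $M^2/\sqrt N$ in the exponent) still follows.
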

In particular, this estimate guarantees that the normalized stochastic process converges to the semi-discrete SKT system when the population size becomes large and the number of sites is fixed.
% \deleted{As evoked in the introduction, this is a first step for obtaining convergence to the continuous SKT system, when the semi-discrete system itself converges to the expected continuous limit. Moreover, provided an estimate for this last convergence, combining both of them will enable to prove convergence of the stochastic processes towards the cross-diffusion system with simultaneously the size of the population and the number of sites going to infinity.}
This constitutes an alternative approach for the rigorous derivation of the SKT system of \cite{chendausjunbis}, starting from discrete space. Both results seem to involve the same scales, with a number of individuals exponentially large compared to the inverse of the spatial scaling parameter. Our approach, in where the interaction is restricted to the same site, seems to  relax the condition of small cross-diffusion parameters in \cite{chendausjunbis}. Nevertheless, our main motivation in the rest of the paper is to go beyond this exponential scale and provide sharper estimates.

\begin{proof}[Proof of Proposition~\ref{prop:rateL2}] First,
using the fact that the total number of individuals
is constant along time, we  observe that  under our assumptions 
\begin{align}
\label{boundUV}
\max(\|\bm{U}^{M,N}(t)\|_{1,M},\|\bm{V}^{M,N}(t)\|_{1,M})&=\max(\|\bm{U}^{M,N}(0)\|_{1,M},\|\bm{V}^{M,N}(0)\|_{1,M} )  \leq  \C_0,
 \end{align}
 almost surely for any $M,N\geq 1$, and
\begin{align}
\label{bounduv}
\max(\|\bm{u}^{M}(t)\|_{1,M},\|\bm{v}^{M}(t)\|_{1,M})&=\max(\|\bm{u}^{M}(0)\|_{1,M},\|\bm{v}^{M}(0)\|_{1,M} )  \leq  \C_0,
 \end{align}
 for any $M\geq 1$.
Combining \eqref{eq:semimg_u} and \eqref{eq:SKTsemidis}, we notice that the process $\bm{\mathcal U}^{M,N}(t) = \bm{U}^{M,N}(t) - \bm{u}^M(t)$ has finite variations and satisfies
\begin{align*}
  \bm{ \mathcal U}^{M,N}(t)&= \bm{\mathcal U}^{M,N}(0)+
  \int_0^t d_1\Delta_M \bm{\mathcal U}^{M,N}(s) \,\dd s \\
  &\qquad  + \int_0^t a_{12}\Delta_M\bigl(\bm{U}^{M,N}(s)\odot \bm{V}^{M,N}(s)- \bm{u}^{M}(s)\odot\bm{v}^{M}(s) \bigr) \,\dd s +\bm{\mathcal M}^{M,N}(t).
\end{align*}
%, using the change of variables 
%formula for càdlàg functions and compensating the jumps of the integral term yields the following equation
Consider now the square of its coordinates
  \begin{equation*}
    \mathcal U_i^{M,N}(t)^2 = \mathcal U_i^{M,N}(0)^2 + \int_0^t 2\,\mathcal U_i^{M,N}(s^-) \,\dd \mathcal U_i^{M,N}(s) +R_i^{M,N}(t),
  \end{equation*}
%  \begin{equation*}
%    \bm{\mathcal{U}}^{M,N}(t) \odot \bm{\mathcal{U}}^{M,N}(t) = \bm{\mathcal{U}}^{M,N}(0) \odot \bm{\mathcal{U}}^{M,N}(0) + \int_0^t 2 \bm{\mathcal U}^{M,N}(s^-) \odot \dd \bm{\mathcal U}^{M,N}(s) + \bm{R}^{M,N}(t),
%  \end{equation*}
  for $i=1,\dots,M$, where 
  \begin{align*}
    R_i^{M,N}(t)&=  \sum_{0<s\leq t}\Bigl\lbrace \mathcal U_i^{M,N}(s) ^2 -  \mathcal U_i^{M,N}(s^-)^2  - 2\,\mathcal U_i^{M,N}(s^-) \bigl(\mathcal U_i^{M,N}(s) - \mathcal U_i^{M,N}(s^-)\bigr)\Bigl\rbrace.
    %&=\Bigl(\frac{1}{N}\Bigr)^2 \sum_{0<s\leq t}\1_{ U_i^{M,N}(s)\ne U_i^{M,N}(s-)}
  \end{align*}
  % since the jumps of $\mathcal U_i^{M,N}$ and $U_i^{M,N}$ are  of size $1/N$.
  Putting the two last expressions together yields
  %$\Delta^{\mathrm{j}}f(s) \coloneqq f(s)-f(s^-)$. Since $|\Delta^{\mathrm{j}} (U_i^{M,N}(s) - u_i^M(s))|=1/N$, plugging in the evolution of $U_i^{M,N}(s) - u_i^{M}(s)$ and developing the last sum gives us
  \begin{multline*}
    \mathcal U_i^{M,N}(t) ^2 = \mathcal U_i^{M,N}(0) ^2 + 2d_1\int_0^t \mathcal U_i^{M,N}(s) \bigl(\Delta_{M}\bm{\mathcal U}^{M,N}(s)\bigr)_i\,\dd s \\
    \hspace{3em}\qquad + 2a_{12}\int_0^t \mathcal{U}_i^{M,N}(s)\bigl(\Delta_{M}\bigl(\bm{U}^{M,N}(s)\odot \bm{V}^{M,N}(s)-\bm{u}^M(s)\odot \bm{v}^M(s)\bigr)\bigr)_i \,\dd s \\
    + 2\int_0^t \mathcal{U}_i^{M,N}(s^-) \,\dd \mathcal M_i^{M,N}(s) + R_i^{M,N}(t).
  \end{multline*}
  Given $\bm{u}\in\RR^M$ let us introduce the discrete gradient vector $\nabla_M^+\bm{u} = (M(u_{i+1}-u_{i}))_{1\leq i\leq M}$ (recalling the periodic convention).
  Summing over all the sites $i\in \{1, \ldots, M\}$ and
  % normalizing by $1/M$ yields
%  \begin{equation*}
%    \begin{split}
%    h_M\bigl\|&\bm{U}^{M,N}(t) - \bm{u}^M(t)\bigr\|_2^2 \\
%    &= h_M\bigl\|\bm{U}^{M,N}(0) - \bm{u}^M(0)\bigr\|_2^2 + \frac{d_1}{M}\int_0^t \sum_{i=1}^M \bigl(U_i^{M,N}(s) - u_i^M(s)\bigr)\Delta_{h_M}\bigl(U_i^{M,N}(s)-u_i^M(s)\bigr) \,\dd s \\
  %  &\qquad + \frac{a_{12}}{M}\int_0^t \sum_{i=1}^M \bigl(U_i^{M,N}(s) - u_i^M(s)\bigr)\Delta_{h_M}\bigl(U_i^{M,N}(s)V_i^{M,N}(s)-u_i^M(s)v_i^M(s)\bigr) \,\dd s \\
 %   &\qquad + \frac{1}{M}\int_0^t \sum_{i=1}^M \bigl(U_i^{M,N}(s^-) - u_i^M(s^-)\bigr) \,\dd R_i^{M,N}(s) + \frac{1}{M}\sum_{i=1}^M \sum_{0<s\leq t} \bm{1}_{|\Delta U_i^{M,N}(s)|>0}(s)\Bigl(\frac{1}{N}\Bigr)^2.
 % \end{split}
  %\end{equation*}
  using discrete integration by parts in the second and third terms of the right hand side
  yields 
  \begin{multline*}
    \bigl\|\bm{\mathcal U}^{M,N}(t) \bigr\|_2^2 = \bigl\|\bm{\mathcal U}^{M,N}(0) \bigr\|_2^2 - 2d_1\int_0^t \bigl\|\nabla_{M}^{+}\bm{\mathcal U}^{M,N}(s)\bigr\|_2^2\ \dd s \\
    - 2a_{12}\int_0^t \sum_{i=1}^M \bigl(\nabla_{M}^{+}\bm{\mathcal{U}}^{M,N}(s)\bigr)_i  \bigl(\nabla_{M}^{+}\bigl(\bm{U}^{M,N}(s)\odot \bm{V}^{M,N}(s)-\bm{u}^M(s)\odot \bm{v}^M(s)\bigr)\bigr)_i \,\dd s \\
    + 2\sum_{i=1}^M \int_0^t \mathcal U_i^{M,N}(s^-) \ \dd \mathcal M_i^{M,N}(s) +   \bigl\| \bm{R} ^{M,N}(t)   \bigl\|_1.
  \end{multline*}
  Dropping the second term which is negative, taking absolute value in the third term and using $2\vert ab\vert \leq \vert a\vert^2+ \vert b\vert^2$   ensures that 
  \begin{multline*}
    \bigl\|\bm{\mathcal U}^{M,N}(t) \bigr\|_2^2   \leq \bigl\|\bm{\mathcal U}^{M,N}(0) \bigr\|_2^2 + a_{12}\int_0^t \bigl\|\nabla_{M}^{+}\bm{\mathcal U}^{M,N}(s) \bigr\|_2^2 \,\dd s \\
    \hspace{2em}\qquad + a_{12}\int_0^t \bigl\|\nabla_{M}^{+}\bigl(\bm{U}^{M,N}(s)\odot \bm{V}^{M,N}(s) - \bm{u}^M(s)\odot \bm{v}^M(s)\bigr)\bigr\|_2^2 \,\dd s \\
    +  2\sum_{i=1}^M\int_0^t\mathcal U_i^{M,N}(s^-) \,\dd \mathcal M_i^{M,N}(s) + \bigl\| \bm{R} ^{M,N}(t)   \bigl\|_1.
  \end{multline*}
%
%  Taking supremum until time $t$ and then taking expectation yields
 % \begin{equation*}
 %   \begin{split}
 %   \EE\biggl(\sup_{w\in[0,t]}h_M\|&\bm{U}^{M,N}(w) - \bm{u}^{M}(w)\|_2^2\biggr) \\ 
  %  &\leq  \EE\Bigl(h_M\|\bm{U}^{M,N}(0) - \bm{u}^M(0)\|_2^2\Bigr) + \EE\biggl(\frac{a_{12}}{2}\int_0^t h_M\bigl\|\nabla_{h_M}^{+}(\bm{U}^{M,N}(s) - \bm{u}^M(s))\bigr\|_2^2 \,\dd s\biggr) \\
%    &\qquad\hspace{5ex} + \EE\biggl(\frac{a_{12}}{2}\int_0^t h_M\bigl\|\nabla_{h_M}^{+}\bigl(\bm{U}^{M,N}(s)\odot \bm{V}^{M,N}(s) - \bm{u}^M(s)\odot \bm{v}^M(s)\bigr)\bigr\|_2^2 \,\dd s\biggr) \\
%    &\qquad\qquad\hspace{10ex} + \frac{1}{M}\sum_{i=1}^M\EE\biggl(\sup_{w\in[0,t]}\int_0^w (U_i^{M,N}(s^-) - u_i^M(s^-))\ \dd R_i^{M,N}(s)\biggr) \\
%    &\qquad\qquad\qquad\hspace{20ex} + \frac{1}{M}\sum_{i=1}^M\EE\Biggl( \sum_{0<s\leq t} \bm{1}_{|\Delta U_i^{M,N}(s)|>0}(s)\Bigl(\frac{1}{N}\Bigr)^2\Biggr).
 % \end{split}
 % \end{equation*}
Moreover
  \begin{align*}
    R_i^{M,N}(t)&=  \sum_{0<s\leq t}\, \left(\mathcal U_i^{M,N}(s)  -  \mathcal U_i^{M,N}(s^-)\right)^2 =\Bigl(\frac{1}{N}\Bigr)^2 \sum_{0<s\leq t}\1_{ U_i^{M,N}(s)\ne U_i^{M,N}(s-)},
  \end{align*}
   since the jumps of $\mathcal U_i^{M,N}$ and $U_i^{M,N}$ coincide and are  of size $1/N$.
Then $\bigl\| \bm{R} ^{M,N}(t)   \bigl\|_1$ is given by the number of jumps before time $t$ 
 \[
   \mathbb E\left(\bigl\| \bm{R}^{M,N}(t)   \bigl\|_1\right)=2N^{-2}\mathbb E( \#\{t\geq 0 :  \bm{U}^{M,N}(s) \neq \bm{U}^{M,N}(s^-)\}).
 \]
 Moreover, the total jump rate in the scaled process $\bm{\mathcal U}^{M,N}$,
 when the number of individuals of each species in site $i$ is equal to $(u_i,v_i)$, is
 \[ 
   2M^2\sum_{i=1}^M u_i\Bigl(d_1+a_{12} \frac{v_i}{N}\Bigr)\leq  2M^2\| \bm {u}\|_1 \Bigl(d_1+a_{12} \frac{\| \bm{v}\|_1}{N}\Bigr)\leq
   C_0'M^3N(1+M),
 \]
 where $C_0'=2(d_1+a_{12})C_0$, by \eqref{boundUV}. Then we get
 \[
   \mathbb E\left(\bigl\| \bm{R} ^{M,N}(t)   \bigl\|_1 \right)\leq  2C_0'\, t\, \frac{M^3}{N}(1+M).
 \]
 Lets us now deal with the third and fourth terms. We notice that
  \begin{align*}
    \bigl(\nabla_{M}^{+} \bm{\mathcal{U}}^{M,N}(s)\bigr)_i^2 =& M^2\Bigl(\mathcal U_{i+1}^{M,N}(s)  - \mathcal U_i^{M,N}(s) \Bigr)^2 
    \leq  2M^2\left(\mathcal U_{i+1}^{M,N}(s)^2 + \mathcal U_i^{M,N}(s)^2\right),
  \end{align*}
Similarly, using also $\vert ab-cd\vert \leq \vert a-c\vert b +c\vert b-d\vert$ to deal with the difference of products of positive terms and recalling   \eqref{boundUV} and \eqref{bounduv},
%  \begin{align*}
%    \nabla_{M}^{+}(U_i^{M,N}(s)V_i^{M,N}(s) - u_i^M(s)v_i^M(s))^2 \leq 2M^2(U_{i+1}^{M,N}(s)V_{i+1}^{M,N}(s) - u_{i+1}^M(s)v_{i+1}^M(s))^2 \\
%    + 2M^2(U_i^{M,N}(s)V_i^{M,N}(s) - u_i^M(s)v_i^M(s))^2,
%  \end{align*}
%  from where we can bound by
we get 
  \begin{align*}
    \bigl( \nabla_{M}^{+}(\bm{U}^{M,N}(s) \odot \bm{V}^{M,N}(s) &- \bm{u}^{M}(s)\odot \bm{v}^M(s)) \bigr)_i^2 \\
    & \leq 4M^2 \Big(\|\bm{u}^M(0) \|_1^2 \,  \mathcal V_{i+1}^{M,N}(s)^2 + \|\bm{u}^M(0) \|_1^2 \, \mathcal V_i^{M,N}(s) ^2  \\
    & \qquad +\|\bm{V}^{M,N}(0)\|_1^2 \, \mathcal{U}_{i+1}^{M,N}(s)^2 
    + \|\bm{V}^{M,N}(0)\|_1^2 \, \mathcal U_i^{M,N}(s)^2 \Big)\\
    & \leq 4\C_0^2M^4 \Big(  \mathcal V_{i+1}^{M,N}(s)^2 + \, \mathcal V_i^{M,N}(s) ^2   + \, \mathcal{U}_{i+1}^{M,N}(s)^2 
    + \mathcal U_i^{M,N}(s)^2 \Big).
  \end{align*}
  Gathering these bounds, taking supremum and then expectation gives us
  \begin{align*}
   \EE\biggl(\sup_{s\in[0,t]} & \|\bm{\mathcal U}^{M,N}(s)\|_2^2\biggr) \\
   & \leq \EE\bigl(\|\bm{\mathcal U}^{M,N}(0) \|_2^2\bigr) + 4a_{12}M^2\int_0^t \EE\bigl( \|\bm{\mathcal U}^{M,N}(s) \|_2^2\bigr)\, \dd s \\
   & \qquad + 8\C_0^2a_{12}M^4 \left(\int_0^t\EE\bigl( \|\bm{\mathcal V}^{M,N}(s) \|_2^2\bigr) \,\dd s 
   +  \int_0^t\EE\bigl( \|\bm{\mathcal U}^{M,N}(s)\|_2^2\bigr)\, \dd s \right)\\
   &  \qquad\qquad + 2 \sum_{i=1}^M\EE\biggl(\sup_{s\in[0,t]}\int_0^s \mathcal U_i^{M,N}(r^-)\ \dd \mathcal M_i^{M,N}(r)\biggr) + 2C_0'\, T\, \frac{M^3}{N}(1+M),
 %  C\, T\, \| u(0)\|_\infty \frac{M}{N}\left( 1+M\| v(0)\|_\infty \right).
%  &  \qquad  \quad  + 2T\frac{M^4}{N}  \|\bm{u}^{M}(0)\|_{\infty}\Bigl( d_1  + a_{12}  \|\bm{v}^{M}(0)\|_{\infty}  \Bigr)
  \end{align*}
    for some constant $C_0'$.
  For the martingale part, we use Cauchy-Schwarz and Burkholder-Davis-Gundy inequalities which together with \eqref{eq:predquad} and \eqref{boundUV} yield
  \begin{align*}
 & \EE\biggl(\sup_{s\in[0,t]}\int_0^s \mathcal U_i^{M,N}(r^-)\ \dd \mathcal M_i^{M,N}(r)\biggr)^2 \\
  & \qquad  \qquad \leq \EE\biggl(\sup_{s\in[0,t]} \biggl\vert \int_0^s \mathcal U_i^{M,N}(r^-) \, \dd\mathcal{M}_i^{M,N}(r) \biggr\vert^2\biggr)\\
    &  \qquad  \qquad \leq %\EE\biggl(\biggl( \int_0^t (U_i^{M,N}(s^-) - u_i^M(s^-))^2 \,\dd \bigl[R_i^{M,N}\bigr]_s\biggr)^\frac{1}{2}\biggr) \\  \leq{}
    \EE\biggl( \int_0^t \mathcal U_i^{M,N}(r^-)^2 \,\dd \bigl\langle \mathcal M_i^{M,N} \bigr\rangle(r)\biggr) \\
    %    \EE\biggl(\sup_{w\in[0,t]}\int_0^w (U_i^{M,N}(s^-) - u_i^M(s^-))\ \dd  R_i^{M,N}(s)\biggr) \\
    & \qquad \qquad \leq 2\frac{M^2}{N} \EE\Bigl( \bigl\|\bm{U}^{M,N}(0)\bigr\|_1\Bigl(d_1+ a_{12}\ \bigl\|\bm{V}^{M,N}(0)\bigr\|_1\Bigr)  \int_0^t \mathcal U_i^{M,N}(s)^2 \,\dd s\Bigr)  \\
    &  \qquad \qquad \leq C_0'' \frac{M^3}{N} (1+M) \int_0^t \EE\Bigl(\mathcal U_i^{M,N}(s)^2\Bigr) \,\dd s,
  \end{align*}
for some constant $C_0''$.  Using that $\sqrt{x}\leq 1+x$ for all $x\geq0$, we obtain
  \begin{align*}
    \EE\biggl(\sup_{s\in[0,t]}\int_0^s \mathcal U_i^{M,N}(r^-) \, \dd \mathcal M_i^{M,N}(r)\biggr) \leq \sqrt{2C_0''} \frac{M^2}{\sqrt{N}}\Biggl( 1+\int_0^t \EE\Bigl(\mathcal U_i^{M,N}(s)^2\Bigr) \,\dd s \Biggr).
  \end{align*}
  Putting everything together and using again \eqref{boundUV} yields 
    \begin{equation*}
    \begin{split}
      \EE\biggl(\sup_{s\in[0,t]} \|\bm{\mathcal U}^{M,N}(s)\|_2^2\biggr)  &\leq  \EE\bigl(\|\bm{\mathcal U}^{M,N}(0) \|_2^2\bigr) + 2\sqrt{2C_0''}\frac{M^3}{\sqrt{N}} + 2C_0'T\frac{M^4}{N} \\
      &\quad + \biggl(8\C_0a_{12}M^4 + 2\sqrt{2C_0''}\frac{M^2}{\sqrt{N}}\biggr) \int_0^t\EE\biggl( \sup_{r\in[0,s]} \|\bm{\mathcal {U}}^{M,N}(r) \|_2^2\biggr)\, \dd s \\
      &\qquad\qquad\qquad\hspace{3em} + 8\C_0a_{12}M^4 \int_0^t \EE\biggl(\sup_{r\in[0,s]} \|\bm{\mathcal V}^{M,N}(r) \|_2^2\biggr)\, \dd s,
    \end{split}
  \end{equation*}
 In a similar way we can obtain analogous bounds for $\bm{V}^{M,N}$. Adding the two inequalities and then applying Gronwall's lemma leads us to the desired conclusion.
\end{proof}

% \deleted{The proof above is general in the sense 
% that we have no conditions on the 
%  limiting SKT system. But as explained in the previous sections, convergence with  a large number of sites requires a superexponential number of individuals per site. The bounds in the previous proof are indeed not sharp at several steps. In particular, we have controlled the quadratic terms of the motion rate by bounding the local size of one species by the total number of individuals, which is fixed and thus controlled quantity. Similarly, the gradient term has been dominated by brute force since we have summed the components.} To go beyond \replaced{the previous}{these} estimates \deleted{and deal with the quadratic term}, \replaced{we will rely on a }{we develop a duality approach. This will bring} stability property \added{for the SKT system that we will prove in the next section}. \added{This will} \deleted{and}  allow us to compare the terms involved in the stochastic process to those of the targeted SKT \replaced{system so that the former}{limit. The stochastic process} will \deleted{then} appear as a stable perturbation of \replaced{the latter}{this SKT system}.
 
   To go beyond the previous estimates, we will rely on a stability property for the SKT system that we will prove in the next section. This will allow us to compare the terms involved in the stochastic process to those of the targeted SKT system so that the former will appear as a stable perturbation of the latter.

\section{Duality estimates}\label{sec:duality}
\subsection{The continuous setting}
\label{subsec:contset}
The duality lemma is a tool first introduced by Martin, Pierre and Schmitt \cite{mapi,pisc}, in the context of reaction-diffusion systems. We propose below a small generalization of the duality lemma, which was suggested in \cite[Remark 7]{moussa_nonloc_tri}. As a matter of fact, we will not directly use the duality lemma presented in this paragraph, but rather translate it in a discrete setting (see Subsection~\ref{subsec:disdualem} below). 

\begin{lem}\label{lem:duamodbis}
  Consider $\mu\in L^\infty(Q_T)$ such that $\alpha\coloneqq\inf_{Q_T} \mu >0$, $z_0\in H^{-1}(\T^d)$ and $f\in L^2(Q_T)$. Then, there exists a unique $z\in L^2(Q_T)$ that solves weakly the Kolmogorov equation
\begin{equation}
  \label{eq:kol}          
  \left\{                                      
    \begin{lgathered}                                
    \partial_t z -\Delta(\mu z) = \Delta f,\\
    z(0,\cdot) =z_0.
    \end{lgathered}                                  
  \right.                                
\end{equation}
Furthermore, this solution $z$ belongs to $\mathscr{C}([0,T];H^{-1}(\T^d))$ and satisfies the \textsf{duality estimate} 
    \begin{align}\label{ineq:dua}
\|z(T)\|_{H^{-1}(\T^d)}^2 +\int_{Q_T} \mu z^2 \leq \|z_0\|_{H^{-1}(\T^d)}^2 + \meand{z_0}^2 \int_{Q_T} \mu + \frac{1}{\alpha}\int_{Q_T} f^2.
    \end{align}
  \end{lem}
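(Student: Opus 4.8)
The strategy is the classical Martin--Pierre--Schmitt duality argument, adapted to the torus and to the distributional/$H^{-1}$ setting with a source of the form $\Delta f$. The idea is to test the equation against the solution of a backward (dual) parabolic problem. Concretely, fix a smooth test function, and for each $\psi$ introduce the \emph{dual solution} $w$ of the retrograde problem
\begin{equation*}
  \left\{
  \begin{lgathered}
  -\partial_t w - \mu\,\Delta w = 0 \quad \text{on } Q_T,\\
  w(T,\cdot) = \psi.
  \end{lgathered}
  \right.
\end{equation*}
Since $\mu\in L^\infty(Q_T)$ with $\mu\ge\alpha>0$, this is a uniformly parabolic equation in non-divergence form with bounded measurable coefficient; existence of a sufficiently regular $w$ (with $\Delta w\in L^2(Q_T)$ and the energy identity below) follows by the usual approximation of $\mu$ by smooth coefficients bounded away from $0$, together with the a priori bound obtained by multiplying the equation by $-\Delta w$ and integrating: this yields
\begin{equation*}
  \tfrac12\|\nabla w(0)\|_{L^2(\T^d)}^2 + \int_{Q_T}\mu\,|\Delta w|^2 = \tfrac12\|\nabla\psi\|_{L^2(\T^d)}^2,
\end{equation*}
which is exactly the coercivity we will exploit. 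One subtlety is the zero Fourier mode: the operator $-\Delta$ is not invertible on constants, so I will split $z = \meand{z}\1 + \widetilde z$ (the mean of $z$ is conserved since the equation is in divergence form, $\tfrac{d}{dt}\meand{z}=0$, hence $\meand{z(t)}=\meand{z_0}$), and run the duality argument on $\widetilde z$, treating the mean part separately — it is precisely the mean part that produces the term $\meand{z_0}^2\int_{Q_T}\mu$.

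The core computation is to pair the equation for $z$ with $w$. Formally,
\begin{equation*}
  \frac{d}{dt}\langle z(t), w(t)\rangle = \langle \partial_t z, w\rangle + \langle z, \partial_t w\rangle
  = \langle \Delta(\mu z) + \Delta f, w\rangle - \langle z, \mu\Delta w\rangle
  = \langle \mu z, \Delta w\rangle + \langle f, \Delta w\rangle - \langle \mu z, \Delta w\rangle,
\end{equation*}
so the quadratic terms cancel and, integrating from $0$ to $T$,
\begin{equation*}
  \langle z(T),\psi\rangle = \langle z_0, w(0)\rangle + \int_{Q_T} f\,\Delta w.
\end{equation*}
Now I estimate the right-hand side. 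Choosing $\psi = (-\Delta)^{-1}\widetilde z(T)$ (so that $\langle \widetilde z(T),\psi\rangle = \|\widetilde z(T)\|_{\dot H^{-1}}^2$ and $\|\nabla\psi\|_{L^2}^2 = \|\widetilde z(T)\|_{\dot H^{-1}}^2$), Cauchy--Schwarz on the torus gives $\langle z_0, w(0)\rangle \le \|z_0\|_{H^{-1}}\|\nabla w(0)\|_{L^2}$ (after handling the mean of $z_0$ against the mean-free $w(0)$ — here $w(0)$ need not be mean-free, which forces the mean correction term), and $\int_{Q_T} f\Delta w \le \alpha^{-1/2}\big(\int_{Q_T}f^2\big)^{1/2}\big(\int_{Q_T}\mu|\Delta w|^2\big)^{1/2} \le \alpha^{-1/2}\|f\|_{L^2(Q_T)}\cdot\tfrac{1}{\sqrt2}\|\nabla\psi\|_{L^2}$. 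Combining with the dual energy identity and using Young's inequality to absorb $\|\nabla\psi\|_{L^2} = \|\widetilde z(T)\|_{\dot H^{-1}}$ into the left side yields the bound on $\|z(T)\|_{H^{-1}}^2$. To get the $\int_{Q_T}\mu z^2$ term as well, I take instead $\psi$ adapted so that the pairing reconstructs $\int_{Q_T}\mu z^2$ — concretely test against $z$ itself via the dual solution with a running source, i.e. solve $-\partial_t w - \mu\Delta w = \mu z$ (or equivalently run the argument on time subintervals and optimize); the same cancellation structure then produces $\int_{Q_T}\mu z^2$ on the left and the claimed terms on the right. Finally, existence and uniqueness of $z\in L^2(Q_T)$, and the continuity $z\in\mathscr C([0,T];H^{-1}(\T^d))$, follow by a Galerkin or transposition argument: the duality identity $\langle z(T),\psi\rangle = \langle z_0,w(0)\rangle + \int f\Delta w$ \emph{defines} $z$ as a linear functional, the a priori estimate shows it is bounded on the right spaces, and uniqueness is immediate from linearity (if $z_0=0$, $f=0$ then $\langle z(T),\psi\rangle = 0$ for all $\psi$).

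The main obstacle is the rigorous construction of the dual solution $w$ with enough regularity to justify the pairing $\frac{d}{dt}\langle z,w\rangle$ and the integrations by parts, given that $z$ is merely $L^2$ (and a priori only $H^{-1}$ in the time-continuous sense) while $\mu$ is merely $L^\infty$. The standard remedy, which I would follow, is to prove everything first for smooth $\mu_\varepsilon \ge \alpha$ and smooth data, where all manipulations are legitimate and the estimate \eqref{ineq:dua} holds with constants independent of $\varepsilon$; then pass to the limit $\varepsilon\to 0$ using the uniform bounds (weak compactness of $z_\varepsilon$ in $L^2(Q_T)$, weak-$*$ convergence $\mu_\varepsilon\to\mu$), checking that the limit solves \eqref{eq:kol} weakly and inherits \eqref{ineq:dua} by weak lower semicontinuity of the norms. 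A secondary technical point is bookkeeping the zero-mode/mean-correction cleanly so that the $\meand{z_0}^2\int_{Q_T}\mu$ term appears with the right constant; this is where the decomposition $z=\meand{z_0}\1+\widetilde z$ and the observation $\int_{Q_T}\mu z^2 = \meand{z_0}^2\int_{Q_T}\mu + 2\meand{z_0}\int_{Q_T}\mu\widetilde z + \int_{Q_T}\mu\widetilde z^2$ must be handled carefully, using $\int_{\T^d}\widetilde z = 0$ at each time.
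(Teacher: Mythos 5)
Your overall philosophy (duality, smoothing of $\mu$ and data, passage to the limit by weak lower semicontinuity) matches the paper's, but the central computation is different, and it is precisely there that your sketch has a gap. The paper does not construct any backward dual solution at all: after reducing to smooth data, it observes $\frac{\dd}{\dd t}\meand{z(t)}=0$, defines at each time the elliptic potential $\phi(t)$ of zero mean with $-\Delta\phi(t)=z(t)-\meand{z_0}$, and tests the equation against $\phi$ itself. This yields in one line
\begin{equation*}
\frac12\frac{\dd}{\dd t}\int_{\T^d}|\nabla\phi(t)|^2+\int_{\T^d}\mu z\,(z-\meand{z_0})=-\int_{\T^d}(z-\meand{z_0})\,f,
\end{equation*}
and a single Young inequality (using $\mu\geq\alpha$) produces \emph{simultaneously} the $H^{-1}$ term, the dissipation $\int_{Q_T}\mu z^2$ with coefficient $1$, the mean correction exactly as $\meand{z_0}^2\int_{Q_T}\mu$, and the source term with the sharp factor $1/\alpha$. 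No auxiliary parabolic problem, hence no regularity theory for a non-divergence backward equation with merely bounded coefficient, is needed.

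Your route, by contrast, requires constructing the retrograde solution $w$ and, more importantly, does not as written deliver the inequality \emph{as stated}. You propose one dual problem ($-\partial_t w-\mu\Delta w=0$, $w(T)=\psi$) for the $H^{-1}$ bound and a second one (running source $\mu z$) for the dissipation term, and assert that "the same cancellation structure then produces" the claimed bound. Summing two separate estimates degrades the constants (at best a factor $2$, plus cross terms from resolving inequalities of the form $X^2\leq aX+b$), whereas the lemma claims coefficient $1$ on each left-hand term and exactly $\frac1\alpha\int_{Q_T}f^2$ on the right; these constants are not cosmetic, since the sharp smallness condition \eqref{ineq:small} in Theorem~\ref{thm:sktstab} is obtained by an absorption argument that uses them exactly. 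To close your argument one would need a single dual problem carrying both the terminal datum $(-\Delta)^{-1}\widetilde{z}(T)$ and the running source $\mu z$, together with a combined Cauchy--Schwarz step, and even then the zero-mode bookkeeping is harder than you suggest: the backward flow does not preserve the spatial mean of $w$ (both $\mu\Delta w$ and the source feed it), so the term $\meand{z_0}\int_{\T^d}w(0)$ does not obviously reduce to $\meand{z_0}^2\int_{Q_T}\mu$ with constant $1$; your proposed alternative of absorbing the mean into the source $f$ replaces this term by $\frac1\alpha\int_{Q_T}(f+\meand{z_0}\mu)^2$, which is strictly worse. So either switch to the paper's direct testing against the elliptic potential of $z$, or carry out the combined dual problem and the mean estimates in full; as it stands the key step is asserted rather than proved.
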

  \begin{rmk}
  This duality estimate is stronger than the one stated in \cite{moussa_nonloc_tri}: it contains a (singular) source term and allows a uniform-in-time control of the $H^{-1}(\mathbb{T}^d)$ norm. The proof that we follow \emph{via} negative Sobolev energy estimate was used in \cite[Lemma 22]{perthame_laamri} in a different context, but only at the formal level (in a smooth setting). Here we include  a singular r.h.s. and give a well-posedness result in this rather non-smooth setting to justify all the computations.
    \end{rmk}
\begin{proof}
  The proof of existence and uniqueness is exactly the same as \cite[Theorem 3]{moussa_nonloc_tri}: following the naming of this article, $z$ is the unique dual solution of \eqref{eq:kol}. For this $z$, the regularity $\mathscr{C}([0,T];H^{-1}(\T^d))$ is obtained classically. We can thus focus here on the duality estimate which needs to be proven only in the case when every function involved in \eqref{ineq:dua} is smooth, in the sense that they are $\mathscr{C}^{\infty}$. For any $t\in [0,T]$ there exists a unique $\phi(t)$ of vanishing mean such that $-\Delta \phi(t) = z(t)- \meand{z(t)}$. Besides, by integrating the Kolmogorov equation we get
  \begin{align*}
\frac{\dd}{\dd t} \meand{z(t)} = 0,
  \end{align*}
so that $\meand{z(t)} = \meand{z_0}$ and $-\partial_t \Delta \phi = \partial_t z$. In particular, we have by integration by parts
\begin{align*}
\int_{\T^d} \phi(t)\, \partial_t z(t) = \frac12 \frac{\dd}{\dd t} \int_{\T^d} |\nabla \phi(t)|^2.
  \end{align*}
  Therefore, multiplying equation \eqref{eq:kol} by $\phi$ and using integration by parts 
  \begin{align*}
\frac12 \frac{\dd}{\dd t} \int_{\T^d} |\nabla \phi(t)|^2 + \int_{\T^d} \mu z (z-\meand{z_0}) =-\int_{\T^d} (z-\meand{z_0}) f.
  \end{align*}
    Integrating in time and using Young's inequality for the right hand side, we get
    \begin{equation*}
      \begin{split}
      \frac12 \int_{\T^d} |\nabla \phi(T)|^2 + \int_{Q_T} \mu z^2 &\leq  \int_{Q_T} \mu z\meand{z_0} + \frac12 \int_{\T^d} |\nabla \phi(0)|^2 \\ 
      &\qquad + \frac12  \int_{Q_T} (z-\meand{z_0})^2 \mu  + \frac12 \int_{Q_T}  \frac{f^2}{\mu},
      \end{split}
    \end{equation*}
    and thus, using $\mu\geq \alpha >0$,
    \begin{align*}
      \int_{\T^d} |\nabla \phi(T)|^2 + \int_{Q_T} \mu z^2 \leq \int_{\T^d} |\nabla \phi(0)|^2 + \meand{z_0}^2 \int_{Q_T} \mu  + \frac{1}{\alpha}\int_{Q_T}  f^2.
    \end{align*}
    Noticing that  $\|z(t)\|_{\dot{H}^{-1}(\T^d)}=\|z(t)-\meand{z_0}\|_{H^{-1}(\T^d)} = \|\nabla \phi(t)\|_2$, once we add $\meand{z_0}$ to each side of the inequality to get the full $H^{-1}(\T^d)$ norms, the proof is over.
\end{proof}
In Subsection~\ref{subsec:disdualem}, we will give (in the discrete setting) variants of the previous duality lemma which include in the r.h.s. some error term, which is possibly singular in the time variable. Being able to take into account those error terms will be crucial in the final asymptotic limit studied in Section~\ref{sec:quantification}. However, already in its current form, the previous duality lemma is a valuable piece of information. We highlight this with an application of this lemma: the proof of  Theorem~\ref{thm:sktstab}, which applies to the conservative SKT system \eqref{eq:SKT:cons} that we consider here with $(u_0,v_0)$ as initial data. We recall the definition of the affine functions $\mu_i(x)\coloneqq d_i+a_{ij}x$ for $i,j=1,2$, so that \eqref{eq:SKT:cons} rewrites
\begin{equation*}
  \left\{                                      
    \begin{lgathered}                                
      \partial_t u - \Delta( \mu_1(v)u ) = 0, \\
    \partial_t v - \Delta( \mu_2(u)v ) = 0.
    \end{lgathered}                                  
  \right.                                
\end{equation*}
In particular, we recover the framework of Lemma~\ref{lem:duamodbis}, as soon as $v$ and $u$ are bounded and non-negative. 
\begin{proof}[Proof of Theorem~\ref{thm:sktstab}]
Let's introduce $z\coloneqq\overline{u}-u$ and $w\coloneqq\overline{v}-v$, so that, by subtraction
  \begin{align*}
    \partial_t z -\Delta(\mu_1(\overline{v}) z) &= \Delta f,\\
    \partial_t w -\Delta(\mu_2(\overline{u})w) &= \Delta g,
    \end{align*}
    where $f\coloneqq a_{12}u(\overline{v}-v)$ and $g\coloneqq a_{21}v(\overline{u}-u)$. Since $u$ and $v$ are bounded and non-negative, we recover the structure of Lemma~\ref{lem:duamodbis} 
    and we get
%    \begin{align*}
%      \|z(T)\|_{H^{-1}(\T^d)}^2 +\int_{Q_T} \mu_1(v) z^2 &\leq \|z_0\|_{H^{-1}(\T^d)}^2 + \meand{z_0}^2 \int_{Q_T} \mu_1(v) + \int_{Q_T} \frac{f^2}{\mu_1(v)},\\
%      \|w(T)\|_{H^{-1}(\T^d)}^2 +\int_{Q_T} \mu_2(u) w^2 &\leq   \|w^0\|_{H^{-1}(\T^d)}^2 + \meand{w_0}^2 \int_{Q_T} \mu_2(u) + \int_{Q_T} \frac{g^2}{\mu_2(u)},
%    \end{align*}
%and thus in particular, using $u,v\geq 0$,
    \begin{align*}
      \|z(T)\|_{H^{-1}(\T^d)}^2 + d_1\int_{Q_T} z^2 &\leq  \|z_0\|_{H^{-1}(\T^d)}^2 + \meand{z_0}^2 \int_{Q_T} \mu_1(\overline{v}) + \frac{a_{12}^2}{d_1}\|u\|_{ L^\infty(Q_T)}^2\int_{Q_T} w^2,\\
      \|w(T)\|_{H^{-1}(\T^d)}^2 + d_2\int_{Q_T} w^2 &\leq \|w_0\|_{H^{-1}(\T^d)}^2 + \meand{w_0}^2 \int_{Q_T} \mu_2(\overline{u}) + \frac{a_{21}^2}{d_2}\|v\|_{ L^\infty(Q_T)}^2\int_{Q_T} z^2,
    \end{align*}
    since $\inf_{Q_T} \mu_i\geq d_i$, $\vert f\vert \leq a_{12} \vert w\vert \,\|\overline{u}\|_{ L^\infty(Q_T)}$
    and $\vert g\vert \leq a_{21} \vert z\vert\, \|\overline{v}\|_{ L^\infty(Q_T)}$.
    By combining the two inequalities we infer
\begin{align*}
  \|z(T)\|_{H^{-1}(\T^d)}^2 + d_1 \int_{Q_T} z^2 &\leq \|z_0\|_{H^{-1}(\T^d)}^2 + \meand{z_0}^2 \int_{Q_T} \mu_1(\overline{v})\\
  &\qquad + \frac{a_{12}^2}{d_1 d_2}\|u\|_{ L^\infty(Q_T)}^2\Big( \|w_0\|_{H^{-1}(\T^d)}^2 
  +  \meand{w_0}^2 \int_{Q_T}\mu_2(\overline{u}) \Big)\\
  &\qquad\qquad\qquad + d_1\left(\frac{a_{12}a_{21}}{d_1 d_2}\right)^2 \|u\|_{ L^\infty(Q_T)}^2\|v\|_{L^\infty(Q_T)}^2 \int_{Q_T} z^2.
\end{align*}
In particular, if we want to absorb the last term of the r.h.s. in the l.h.s. the inequality that we need is exactly the smallness condition \eqref{ineq:small}. If the later is satisfied, and if we allow the symbol $\lesssim$ to depend on $d_i,a_{ij},\|u\|_{L^\infty(Q_T)}$ and $\|v\|_{L^\infty(Q_T)}$, we have  established
\begin{equation*}
  \begin{split}
    \|z(T)\|_{H^{-1}(\T^d)}^2 + \int_{Q_T} z^2 &\lesssim \|z_0\|_{H^{-1}(\T^d)}^2 + \|w_0\|_{H^{-1}(\T^d)}^2 \\
    & \qquad + \meand{z_0}^2 \int_{Q_T} \mu_1(\overline{v}) +  \meand{w_0}^2 \int_{Q_T}\mu_2(\overline{u}).
  \end{split}
\end{equation*}
  Since the previous computation is still valid replacing $T$ by any $t\in[0,T]$, we have in fact
\begin{align*}
  \vertiii{z}_T^2 \lesssim \|z_0\|_{H^{-1}(\T^d)}^2 + \|w_0\|_{H^{-1}(\T^d)}^2+\meand{z_0}^2 \int_{Q_T} \mu_1(\overline{v}) +  \meand{w_0}^2 \int_{Q_T}\mu_2(\overline{u}).
  \end{align*}
  Exchanging the roles $(z,\overline{u},\overline{v},u,v) \leftrightarrow (w,\overline{v},\overline{u},v,u)$, the previous right hand side remains unchanged: we have exactly the same estimate for $\vertiii{w}^2_T$ on the left hand side. The proof is over once we notice that
$\int_{Q_T}\mu_1(\overline{v})=T \int_{\T^d}\mu_1(\overline{v}_0)$ and $\int_{Q_T}\mu_2(\overline{u}) =T\int_{\T^d}\mu_2(\overline{u}_0)$,
since the space integrals of $u$ and $v$ are conserved through time.  
\end{proof}

\subsection{Reconstruction operators}
We now transfer the previous estimates into a discrete setting. We will have to manipulate several norms on $\RR^M$, reminiscent of classical function spaces of the continuous variable. As the number of points $M$ of the discretization will be sent to infinity, it will be crucial to have estimates which do not depend on this parameter. In particular, the following notion of uniform equivalence will be relevant.
\begin{dfn}
  Given norms $P_{1,M}$ and $P_{2,M}$ on $\RR^M$, we say that $P_{1,M}$ and $P_{2,M}$ are \textsf{uniformly equivalent} if there exists $\alpha,\beta>0$ such that
  \[\forall M\in\NN,\quad\forall \bm{u}\in\RR^M,\quad \alpha P_{1,M}(\bm{u})\leq P_{2,M}(\bm{u})\leq \beta P_{1,M}(\bm{u}).\]
  If this is satisfied, we write $P_{1,M}\sim P_{2,M}$.
\end{dfn}
 Given a discretization like \eqref{eq:discT}, we will use two interpolation methods to build a function defined on the whole torus $\T$.
%The generic approach is to fix a profile $\theta$ (generally compactly supported) and consider
% \begin{equation}\label{eq:rec}
% x\mapsto  \sum_{k=1}^M \theta\left(M(x-x_k)\right)u_k.
% \end{equation}
% \begin{dfn}
% For $\bm{u}\in\RR^M$ and $\theta\coloneqq\mathbf{1}_{[-1,0]}$, the function defined by \eqref{eq:rec} is a step function that we denote $\sigma_M(\bm{u})$. For $\bm{u}\in\RR^M$ and $\theta(z)\coloneqq(1-|z|)^+$, the function defined by \eqref{eq:rec} is a piecewise linear function that we denote $\pi_M(\bm{u})$. The corresponding vector space of functions (step and continuous piecewise linear functions respectively) are denoted
% \begin{align*}
%   \mathfrak{s}_M &\coloneqq \left\{\sigma_M(\bm{u}) : \bm{u}\in\RR^M\right\}\quad\text{and}\quad
%   \mathfrak{p}_M \coloneqq \left\{\pi_M(\bm{u}) : \bm{u}\in\RR^M\right\}.
% \end{align*}
% If $t\mapsto \bm{u}(t)$ is a map from $[0,T]$ to $\RR^M$, we simply denote by $\sigma_M(\bm{u})$ and $\pi_M(\bm{u})$ the respective maps from $[0,T]$ to $\mathfrak{s}_M$ and $\mathfrak{p}_M$ respectively. 
% \end{dfn}
% \textcolor{blue}{Je propose plutot : } 
% Given a discretization like \eqref{eq:discT}, there are several ways to build a function defined on the whole torus $\T$. The generic approach is to fix a profile $\theta$ (generally compactly supported) and consider
% $x\mapsto  \sum_{k=1}^M \theta\left(M(x-x_k)\right)u_k$. We are using two specific choices of function $\theta$.
\begin{dfn}
For $\bm{u}\in\RR^M$,
the function defined for $x\in \T$ by
$$\sigma_M(\bm{u})(x):= \sum_{k=1}^M \mathbf{1}_{[-1,0]}\left(M(x-x_k)\right)\,u_k,$$
 is a step function and
 the function
 $$\pi_M(\bm{u})(x):=\sum_{k=1}^M \theta\left(M(x-x_k)\right)\,u_k \,, 
 \qquad \text{where } \, \theta(z)\coloneqq(1-|z|)^+,$$
 is a piecewise linear function. The corresponding vector space of functions (step and continuous piecewise linear functions respectively) are denoted
\begin{align*}
  \mathfrak{s}_M &\coloneqq \left\{\sigma_M(\bm{u}) : \bm{u}\in\RR^M\right\}\quad\text{and}\quad
  \mathfrak{p}_M \coloneqq \left\{\pi_M(\bm{u}) : \bm{u}\in\RR^M\right\}.
\end{align*}
If $t\mapsto \bm{u}(t)$ is a map from $[0,T]$ to $\RR^M$, we simply denote by $\sigma_M(\bm{u})$ and $\pi_M(\bm{u})$ the respective maps from $[0,T]$ to $\mathfrak{s}_M$ and $\mathfrak{p}_M$ respectively. 
\end{dfn}

\begin{prop}\label{prop:normlp}
For $\bm{u}\in\RR^M$ we have $\|\bm{u}\|_\infty = \|\sigma_M(\bm{u})\|_{L^\infty(\T)}=\|\pi_M(\bm{u})\|_{L^\infty(\T)}$ and for $1\leq p<\infty$ we have $\|\bm{u}\|_{p,M} = \|\sigma_M(\bm{u})\|_{L^p(\T)} \geq 
\|\pi_M(\bm{u})\|_{L^p(\T)}$. Furthermore, the equivalence $\|\sigma_M(\cdot)\|_{L^p(\T)}\sim \|\pi_M(\cdot)\|_{L^p(\T)}$ holds
on the positive cone $\RR_+^M$. 
\end{prop}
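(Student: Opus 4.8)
The plan is to work directly with the piecewise-defined functions on the torus and compute the relevant integrals cell by cell. Recall that $\sigma_M(\bm u)$ takes the constant value $u_k$ on the interval $(x_{k-1},x_k]=((k-1)/M,k/M]$ (using the profile $\theta=\mathbf 1_{[-1,0]}$), while $\pi_M(\bm u)$ is the continuous piecewise-affine interpolant with $\pi_M(\bm u)(x_k)=u_k$, affine between consecutive nodes (with the periodic convention $x_0\sim x_M$).

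\textbf{The $L^\infty$ identities.} For $\sigma_M(\bm u)$ this is immediate: a step function attaining exactly the values $u_1,\dots,u_M$ has sup-norm $\max_k|u_k|=\|\bm u\|_\infty$. For $\pi_M(\bm u)$, on each cell the function is affine, hence its extreme values are attained at the endpoints $x_{k-1},x_k$; therefore $\|\pi_M(\bm u)\|_{L^\infty(\T)}=\max_k |u_k|=\|\bm u\|_\infty$ as well.

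\textbf{The $L^p$ identity for $\sigma_M$ and the inequality for $\pi_M$.} Since $\sigma_M(\bm u)$ is constant equal to $u_k$ on a cell of length $1/M$, we get $\int_\T|\sigma_M(\bm u)|^p=\frac1M\sum_{k=1}^M|u_k|^p=\|\bm u\|_{p,M}^p$, which is the stated equality. For the inequality $\|\pi_M(\bm u)\|_{L^p(\T)}\le\|\bm u\|_{p,M}$: on the cell $[x_{k-1},x_k]$ the affine interpolant between the values $u_{k-1}$ and $u_k$ satisfies, by convexity of $t\mapsto|t|^p$ and Jensen (or directly $|\,(1-s)u_{k-1}+s u_k\,|^p\le (1-s)|u_{k-1}|^p+s|u_k|^p$), $\int_{x_{k-1}}^{x_k}|\pi_M(\bm u)|^p\le \frac1M\cdot\frac{|u_{k-1}|^p+|u_k|^p}{2}$. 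Summing over $k$ and using periodicity so each index appears exactly twice, $\int_\T|\pi_M(\bm u)|^p\le\frac1M\sum_k|u_k|^p=\|\bm u\|_{p,M}^p$, giving the claimed inequality. This also covers $p=\infty$ consistency, though we already have equality there.

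\textbf{The uniform equivalence on the positive cone.} One direction, $\|\pi_M(\bm u)\|_{L^p(\T)}\le\|\sigma_M(\bm u)\|_{L^p(\T)}$, is the inequality just proved (valid for all $\bm u$, with $\beta=1$). For the reverse on $\RR_+^M$, I would establish a pointwise-per-cell lower bound: there is an absolute constant $c_p>0$ such that for nonnegative reals $a,b\ge0$, $\int_0^1|(1-s)a+s b|^p\,ds\ge c_p\,\frac{a^p+b^p}{2}$. Indeed, for nonnegative $a,b$ one has $(1-s)a+sb\ge \max(a,b)\cdot\min(1-s,s)$ is too lossy; better, split the interval: on $s\in[0,1/2]$, $(1-s)a+sb\ge \frac12 a$, and on $s\in[1/2,1]$, $(1-s)a+sb\ge\frac12 b$, whence $\int_0^1((1-s)a+sb)^p\,ds\ge \frac1{2^{p+1}}(a^p+b^p)$, so $c_p=2^{-p}$ works. (Here nonnegativity is essential — without it the affine function can vanish in the interior even when $a,b$ are large in modulus, as with $a=-b$.) Multiplying by $1/M$, summing over cells, and using that each index appears twice by periodicity yields $\int_\T \pi_M(\bm u)^p\ge 2^{-p}\cdot\frac1M\sum_k u_k^p=2^{-p}\|\sigma_M(\bm u)\|_{L^p(\T)}^p$, i.e.\ $\|\sigma_M(\bm u)\|_{L^p(\T)}\le 2\,\|\pi_M(\bm u)\|_{L^p(\T)}$ on $\RR_+^M$. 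Together with $\beta=1$ this gives the uniform equivalence with constants independent of $M$.

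The only mildly delicate point — and the reason the equivalence is only claimed on the positive cone — is precisely the cell-wise lower bound: it genuinely fails for sign-changing data, so the argument must exploit nonnegativity (e.g.\ via the interval-splitting estimate above). Everything else is a routine cell-by-cell computation; I would take care only to handle the periodic wrap-around ($x_0=x_M$) correctly so that the doubling of indices in the sums is accounted for, and to note that all constants ($1$ for the upper bound, $2$ for the lower) are genuinely $M$-independent, as required by the definition of uniform equivalence.
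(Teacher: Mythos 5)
Your proof is correct, and it proceeds by a genuinely different (and more elementary) route than the paper. The paper's proof first identifies $\pi_M(\bm{u})=\sigma_M(\bm{u})\star\rho_M$ for an approximate identity $(\rho_M)_M$, so the inequality $\|\pi_M(\bm{u})\|_{L^p(\T)}\le\|\sigma_M(\bm{u})\|_{L^p(\T)}$ follows from Young's convolution inequality; for the reverse bound on $\RR_+^M$ it expands $\pi_M(\bm{u})=\sum_k u_k\ffi_{k,M}$ on the hat-function basis, uses the pointwise superadditivity $\sum_k u_k\ffi_{k,M}\ge\bigl(\sum_k u_k^p\ffi_{k,M}^p\bigr)^{1/p}$ valid for nonnegative entries, and the explicit value $\|\ffi_{k,M}\|_{L^p(\T)}^p=\tfrac{2}{M(p+1)}$, yielding a constant $\bigl(\tfrac{p+1}{2}\bigr)^{1/p}$ depending on $p$ but not on $M$. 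You instead compute cell by cell: convexity of $t\mapsto|t|^p$ gives the upper bound (with constant $1$, as in the paper), and for the lower bound your interval-splitting estimate $\int_0^1((1-s)a+sb)^p\,\dd s\ge 2^{-(p+1)}(a^p+b^p)$ for $a,b\ge0$ gives $\|\sigma_M(\bm{u})\|_{L^p(\T)}\le 2\|\pi_M(\bm{u})\|_{L^p(\T)}$, a constant uniform in both $M$ and $p$, which is slightly stronger than what the paper's argument delivers (uniformity in $p$ is not needed for the proposition, but it is a clean byproduct). The paper's convolution identity is arguably slicker and reusable (it is the natural structural reason for the one-sided inequality), while your argument is self-contained, handles explicitly the $L^\infty$ identities and the $L^p$ equality for $\sigma_M$ that the paper treats as immediate, and correctly isolates where nonnegativity is indispensable (the counterexample $a=-b$). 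Your bookkeeping of the periodic wrap-around and the doubling of indices in the sums is also accurate.
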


\begin{proof}
  The equalities are obvious. For the inequality and the uniform equivalence, we refer to \cite[Lemma 11]{DDD2019}.
  % We first notice
%  $\mathbf{1}_{[-1,0]}\star\mathbf{1}_{[0,1]}(x) = \int_{-1}^0 \mathbf{1}_{[0,1]}(x-y)\,\dd y = (1-|x|)^+.$
%  In particular, we infer for $\ffi(x) = (1-|x|)^+$
%   \begin{align*}
%     \ffi_{k,M}(x)\coloneqq\ffi\left(M(x-x_k)\right) &= \int \mathbf{1}_{[-1,0]}\left(M(x-x_k)-y\right)\mathbf{1}_{[0,1]}(y)\,\dd y\\
%     &= M\int \mathbf{1}_{[-1,0]}\left(M(x-z-x_k)\right)\mathbf{1}_{[0,1]}(Mz)\,\dd z \\
%     &= \theta_{k,M} \star \rho_M(x),
%   \end{align*}
%   where $\theta_{k,M}(x) = \mathbf{1}_{[-1,0]}\left(M(x-x_k)\right)$ and $\rho_M(x)=M\mathbf{1}_{[0,1]}(Mx)$.
% We have thus established $\pi_M(\bm{u}) = \sigma_M(\bm{u})\star 
% \rho_M$, where $(\rho_M)_M$ is an approximation of the identity. Therefore, we have $\|\pi_M(\bm{u})\|_{L^p(\T)} \leq \|\sigma_M(\bm{u})\|_{L^p(\T)}$.
      
%   Conversely, assume $\bm{u}\geq 0$. By definition we have
%   \begin{align*}
%     \pi_M(\bm{u}) = \sum_{k=1}^M u_k \ffi_{k,M}.
%   \end{align*}
%   %with $\ffi_{k,M}(x) = \ffi(M(x-x_k))$ and $\ffi(x)=(1-|x|)^+$.
%   Recall that for any vector $\bm{w}\in\RR^M$, one has $M^{1/p}\|\bm{w}\|_{p,M} \leq M\|\bm{w}\|_{1,M}$. In particular, using  $u_k\geq 0$, we infer at any point $x\in\T$, 
%   \[\pi_M(\bm{u})(x) = \sum_{k=1}^M u_k \ffi_{k,M} \geq \left(\sum_{k=1}^M u_k^p \ffi_{k,M}^p(x)\right)^{1/p},\]
%   from where we conclude
%   \[\|\pi_M(\bm{u})\|_{L^p(\T)} \geq M\|\bm{u}\|_{p,M}^p \|\ffi\|_{L^p(\T)}^p = \|\sigma_M(\bm{u})\|_{L^p(\T)}^p \frac{2}{p+1},\]
%   using that $\|\ffi_{k,M}\|_{L^p(\T)}^p=\frac{1}{M}\|\ffi\|_{L^p(\T)}^p=\frac{1}{M}\frac{2}{p+1}$.
\end{proof}

We end this paragraph with an estimate that belongs to the folklore of the finite element method and omit the proof. It is usually proved using the Bramble-Hilbert lemma, but since here we focus on the one dimensional case, it is also possible to give a direct, elementary proof (see for instance \cite[Lemma 6.2.10]{allaire}).
\begin{lem}\label{lem:bh}
  For $\ffi\in H^2(\T)$ and $M\in\NN^*$ there exists a unique $\iota_M(\ffi)\in\mathfrak{p}_M$ matching the values of $\ffi$ on the grid $(x_k)_{1\leq k\leq M}$. It satisfies
  \begin{align*}
 \|\ffi-\iota_M(\ffi)\|_{\dot{H}^{-1}(\T)} &\lesssim M^{-2}\|\ffi\|_{\dot{H}^2(\T)},\\  
    \|\ffi-\iota_M(\ffi)\|_{L^2(\T)} &\lesssim M^{-2}\|\ffi\|_{\dot{H}^2(\T)},\\
    \|\ffi-\iota_M(\ffi)\|_{\dot{H}^1(\T)} &\lesssim M^{-1}\|\ffi\|_{\dot{H}^2(\T)},
  \end{align*}
  where the symbol $\lesssim$ means that the inequality holds up to a constant independent of $\ffi$ and $M$.
\end{lem}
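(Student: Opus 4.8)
The plan is to establish the three estimates of Lemma~\ref{lem:bh} by elementary one-dimensional arguments, exploiting the fact that $\iota_M(\ffi)$ is the continuous piecewise linear interpolant of $\ffi$ on the uniform grid $(x_k)$. First I would record that $\iota_M(\ffi)$ is well defined for $\ffi\in H^2(\T)$: by Sobolev embedding in dimension one, $H^2(\T)\hookrightarrow\mathscr{C}^1(\T)$, so pointwise values on the grid make sense, and there is exactly one element of $\mathfrak{p}_M$ matching them, namely $\pi_M\bigl((\ffi(x_k))_{1\le k\le M}\bigr)$. Set $e\coloneqq\ffi-\iota_M(\ffi)$; by construction $e(x_k)=0$ for every $k$. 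On each subinterval $I_k\coloneqq[x_{k-1},x_k]$ of length $1/M$, $e$ vanishes at both endpoints, so Rolle's theorem gives a point where $e'$ vanishes, and $e''=\ffi''$ there (since $\iota_M(\ffi)$ is affine on $I_k$).

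For the $L^2$ estimate I would work interval by interval. On $I_k$, since $e$ vanishes at the endpoints, the Poincaré inequality on an interval of length $1/M$ gives $\|e\|_{L^2(I_k)}\lesssim M^{-1}\|e'\|_{L^2(I_k)}$; and since $e'$ has a zero in $I_k$, a second Poincaré-type inequality gives $\|e'\|_{L^2(I_k)}\lesssim M^{-1}\|e''\|_{L^2(I_k)}=M^{-1}\|\ffi''\|_{L^2(I_k)}$. Combining and summing over $k$ yields $\|e\|_{L^2(\T)}\lesssim M^{-2}\|\ffi''\|_{L^2(\T)}=M^{-2}\|\ffi\|_{\dot H^2(\T)}$. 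The $\dot H^1$ estimate is the intermediate step already obtained: $\|e\|_{\dot H^1(\T)}=\|e'\|_{L^2(\T)}\lesssim M^{-1}\|\ffi\|_{\dot H^2(\T)}$ (note $e$ has zero mean is not needed here since $\dot H^1$ only sees the derivative, but one checks $\|e'\|_{L^2}=\|e\|_{\dot H^1}$ directly). For the $\dot H^{-1}$ estimate I would use duality: for $\psi\in\dot H^1(\T)$, write $\int_\T e\,\psi = \int_\T e\,(\psi-\overline\psi)$ where $\overline\psi$ is a suitable piecewise-constant approximation of $\psi$ chosen so that $\int_{I_k}(\psi-\overline\psi)$ can be paired against $e$ favorably; more cleanly, since $e$ has mean zero on each $I_k$ is \emph{not} automatic, I would instead introduce the primitive $E(x)\coloneqq\int_0^x e$, observe $E(x_k)$ need not vanish, so rather pair $\int_\T e\psi = -\int_\T E\psi' + (\text{boundary})$ and bound $\|E\|_{L^2}$: since $E'=e$ and, by the grid conditions, $\int_{I_k}e = E(x_k)-E(x_{k-1})$, one controls the increments of $E$; alternatively, and most simply, use that $\dot H^{-1}$ is the dual of $\dot H^1$ together with the already-proved $L^2$ bound via $\|e\|_{\dot H^{-1}(\T)}\lesssim M^{-1}\|e\|_{L^2(\T)}$ when $e$ has suitable cancellation — so I would verify that $\int_\T e = \sum_k\int_{I_k}e$ and handle the mean separately, subtracting $\meand{e}$ and noting $|\meand{e}|\lesssim M^{-2}\|\ffi\|_{\dot H^2}$ as well.

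The step I expect to be the genuine (if mild) obstacle is the $\dot H^{-1}$ bound, because unlike the $L^2$ and $\dot H^1$ bounds it does not localize trivially to the subintervals: the natural interval-by-interval pairing loses a power of $M$ unless one uses the zero-mean-on-each-interval property of $\psi-(\text{its interval average})$ and integrates by parts correctly. The cleanest route is: given $\psi\in\dot H^1(\T)$ with $\|\psi\|_{\dot H^1}\le 1$, let $\bar\psi_k$ be the average of $\psi$ on $I_k$ and $\widehat\psi\coloneqq\sum_k\bar\psi_k\1_{I_k}$; then $\int_\T e\,\widehat\psi = \sum_k\bar\psi_k\int_{I_k}e$, and each $\int_{I_k}e$ is $O(M^{-1}\|e\|_{L^2(I_k)}) = O(M^{-3}\|\ffi''\|_{L^2(I_k)})$ by the $L^2$ estimate and Cauchy–Schwarz on $I_k$, while $\int_\T e\,(\psi-\widehat\psi)\le \|e\|_{L^2}\|\psi-\widehat\psi\|_{L^2}\lesssim M^{-2}\|\ffi\|_{\dot H^2}\cdot M^{-1}\|\psi'\|_{L^2}$ using the Poincaré estimate $\|\psi-\widehat\psi\|_{L^2(I_k)}\lesssim M^{-1}\|\psi'\|_{L^2(I_k)}$. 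Summing with Cauchy–Schwarz in $k$ gives $\int_\T e\,\psi\lesssim M^{-2}\|\ffi\|_{\dot H^2}$, hence the claimed $\dot H^{-1}$ bound. Since the excerpt explicitly says the proof is omitted and elementary, I would in fact just cite this as standard finite-element interpolation theory and sketch only the interval-wise Poincaré chain.
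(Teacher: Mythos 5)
Your proposal is correct, and it supplies exactly the kind of ``direct, elementary proof'' that the paper alludes to but deliberately omits (the paper only remarks that the result is finite-element folklore, usually proved via the Bramble--Hilbert lemma, and that in dimension one an elementary argument is possible). Your interval-wise chain is the right one: $e=\ffi-\iota_M(\ffi)$ vanishes at the grid points, Rolle gives a zero of $e'$ in each $I_k$, and the two Poincar\'e-type inequalities on intervals of length $1/M$ give $\|e'\|_{L^2(I_k)}\lesssim M^{-1}\|\ffi''\|_{L^2(I_k)}$ and $\|e\|_{L^2(I_k)}\lesssim M^{-2}\|\ffi''\|_{L^2(I_k)}$, which sum to the $\dot H^1$ and $L^2$ bounds. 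Two small remarks. First, in your $\dot H^{-1}$ step the per-interval Cauchy--Schwarz bound should read $|\int_{I_k}e|\leq M^{-1/2}\|e\|_{L^2(I_k)}$, not $M^{-1}\|e\|_{L^2(I_k)}$; this does not affect the conclusion, since summing correctly still yields $|\int_\T e\,\widehat\psi|\leq\|\psi\|_{L^2(\T)}\|e\|_{L^2(\T)}\lesssim M^{-2}\|\ffi\|_{\dot H^2(\T)}$. Second, the $\dot H^{-1}$ estimate is in fact immediate and needs none of the duality machinery you worry about: the lemma claims the same rate $M^{-2}$ as in $L^2$ (no extra power of $M$ is gained), and on the torus $\|g\|_{\dot H^{-1}(\T)}\leq\|g\|_{L^2(\T)}$ because the nonzero Fourier modes satisfy $|k|^{-2}\leq 1$ (equivalently, for mean-free test functions $\psi$, $|\int_\T e\,\psi|\leq\|e\|_{L^2}\|\psi\|_{L^2}\lesssim\|e\|_{L^2}\|\psi\|_{\dot H^1}$ by Poincar\'e--Wirtinger). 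So the first estimate follows from the second in one line; your longer argument with interval averages is valid but would only be needed if one wanted a rate better than $M^{-2}$ in the negative norm.
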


\subsection{A discrete negative Sobolev norm} \label{subsec:A}
We introduce in this paragraph a norm on $\RR^M$ analogous to the $H^{-1}(\T^d)$ norm for functions.  
We summarize first the main (standard) properties of the laplacian matrix  $\Delta_M$ introduced in \eqref{eq:DeltaM} in the following proposition (for a proof see for instance \cite{smith}) .
\begin{prop}\label{prop:deltamat}
  Recalling the definition \eqref{eq:DeltaM}, the spectrum of the matrix $-\Delta_M$ is given by 
  \begin{align*}
    \left\{4M^2\sin^2\hspace{-0.2em}\left(\frac{\pi k}{M}\right): 0\leq k \leq M-1\right\} \subset \RR_+.
  \end{align*}
  We have thus $-\Delta_M\in\textnormal{S}_M^+(\RR)$ and this matrix admits therefore a unique symmetric non-negative square root. One has furthermore $\textnormal{Ker}(\Delta_M)=\textnormal{Span}_\RR(\mathbf{1}_M)$ and $-\Delta_M$ enjoys a uniform (in $M$) spectral gap : all non-zero eigenvalues of $-\Delta_M$ are lower-bounded by $16$, independently of the dimension $M$.
\end{prop}
Using this, we fix the following notations. 
\begin{dfn}\label{def:A}
  For $\bm{u}\in \mathrm{Ran}(\Delta_M)$ the unique $\Phi\in \mathrm{Ran}(\Delta_M)$ such that $\bm{u}=\Delta_M\Phi$ is denoted (with a small abuse of notation) $\Phi=\Delta_M^{-1}\bm{u}$. The square root of the non-negative matrix $-\Delta_M$ is denoted $\sqrt{-\Delta_M}$. 
\end{dfn}
The uniform spectral gap for the discrete laplacian (see Proposition~\ref{prop:deltamat}) implies in particular the following estimate for any $\bm{\Phi}\in\RR^M$ 
\begin{align}\label{prop:pw}
\|\bm{\Phi}- \meanM{\bm{\Phi}}\|_{2,M} \leq \|\Delta_M\bm{\Phi}\|_{2,M}.
\end{align}
On the torus, the Poincaré-Wirtinger inequality implies the estimate $\|\ffi-\mean{\ffi}\|_{L^2(\T)} \lesssim \|\Delta \ffi\|_{L^2(\T)}$, from which the previous inequality is somehow reminiscent.

\medskip

A standard computation when dealing with the Lagrange finite element methods in dimension $1$ shows that, up to a factor $1/M$, the stiffness matrix is precisely given by $-\Delta_M$ whereas the mass matrix is given by (see Section 6.2.1 and Exercise 7.4.1 in \cite{allaire})
\begin{equation}\label{eq:BN}
  B_M\coloneqq
  \begin{pmatrix}
    \frac23 & \frac16 & 0 &\cdots& \frac16 \\
    \frac16 & \frac23 & \frac16 &\cdots& 0\\
    \vdots & \ddots &\ddots & \ddots &\vdots \\
    0 & \cdots & \frac16 & \frac23 & \frac16\\
    \frac16 & \cdots & 0 & \frac16 & \frac23
  \end{pmatrix}.
\end{equation}
More precisely, recalling that $\ffi_{k,M}(x)\coloneqq\ffi(M(x-x_k))$ where $\ffi(x) \coloneqq (1-|x|)^+$,  we have \begin{align*}
(-\Delta_M)_{k,j} = M \int_{\T} \nabla\ffi_{k,M} \cdot \nabla \ffi_{j,M},\qquad (B_M)_{k,j}=\frac{1}{M}\int_{\T} \ffi_{k,M}\ffi_{j,M},
\end{align*}
for any $1\leq k,j\leq M$.
Since $\mathfrak{p}_M$ is the vector space spanned by the functions $(\ffi_{k,M})_{1\leq k\leq M}$, expanding elements of this space on that basis we recover the following standard result.
\begin{prop}\label{prop:elemfin}
  For $\bm{w}\in\RR^M$ we have
  \begin{equation}\label{eq:elemfin2}
    -(\bm{w}|\Delta_M\bm{w})_M = \int_\T |\nabla \pi_M(\bm{w})(x)|^2 \,\dd 
x,
  \end{equation}
  where we recall that $(\cdot|\cdot)_M$ denotes the rescaled inner product on $\RR^M$ (see Subsection~\ref{subsec:not}). Furthermore, for any $\bm{u}\in\RR^M$ we have
  \begin{equation}\label{eq:elemfin1}
    B_M \bm{u} = - \Delta_M \bm{w}
     \, \Longleftrightarrow\,
    \forall \psi\in\mathfrak{p}_M, \int_\T \psi(x) \, \pi_M(\bm{u})(x) \,\dd x = \int_\T \nabla \psi(x) \cdot \nabla \pi_M(\bm{w})(x) \,\dd x.
  \end{equation}
\end{prop}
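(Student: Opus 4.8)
The two identities are standard facts from the Lagrange $P_1$ finite element method on the periodic interval; the plan is to verify them by direct computation on the nodal basis $(\ffi_{k,M})_{1\leq k\leq M}$ of $\mathfrak{p}_M$, where $\ffi_{k,M}(x)=\ffi(M(x-x_k))$ and $\ffi(z)=(1-|z|)^+$, exactly as introduced in the proof of Proposition~\ref{prop:normlp}. The key elementary computations are the element-wise integrals $\int_\T \ffi_{k,M}\ffi_{\ell,M}$ and $\int_\T \nabla\ffi_{k,M}\cdot\nabla\ffi_{\ell,M}$. On the reference configuration one finds $\int_\T \ffi_{k,M}^2 = \tfrac{2}{3M}$, $\int_\T \ffi_{k,M}\ffi_{k\pm1,M} = \tfrac{1}{6M}$, and all other products vanish because the supports are disjoint; similarly $\int_\T |\nabla\ffi_{k,M}|^2 = 2M$, $\int_\T \nabla\ffi_{k,M}\cdot\nabla\ffi_{k\pm1,M} = -M$, and the rest vanish. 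The first family of integrals is precisely the entry pattern of $B_M$ in \eqref{eq:BN} up to the scalar $\tfrac1M$, and the second is precisely the entry pattern of $-\tfrac1M\Delta_M$ from \eqref{eq:DeltaM}; here is where the periodic conventions $\ffi_{0,M}=\ffi_{M,M}$, $\ffi_{M+1,M}=\ffi_{1,M}$ matter, to get the corner entries right.

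For \eqref{eq:elemfin2}, I would expand $\pi_M(\bm w)=\sum_k w_k\ffi_{k,M}$ and compute
\[
  \int_\T |\nabla\pi_M(\bm w)|^2 = \sum_{k,\ell} w_k w_\ell \int_\T \nabla\ffi_{k,M}\cdot\nabla\ffi_{\ell,M}
  = \sum_k \bigl(2M\, w_k^2 - M\,w_k w_{k+1} - M\, w_k w_{k-1}\bigr),
\]
and recognize the right-hand side as $-\tfrac1M \sum_k w_k (\Delta_M\bm w)_k = -(\bm w\,|\,\Delta_M\bm w)_M$ by definition of $\Delta_M$ and of the rescaled inner product $(\cdot|\cdot)_M$. (One should also note this quantity is $\geq 0$, consistently with Proposition~\ref{prop:A}.)

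For \eqref{eq:elemfin1}, since both sides of the claimed equivalence are linear in $\psi$, it suffices to test against each basis function $\psi=\ffi_{j,M}$. Writing $\pi_M(\bm u)=\sum_k u_k\ffi_{k,M}$ and $\pi_M(\bm w)=\sum_k w_k\ffi_{k,M}$, the left integral becomes $\sum_k u_k\int_\T\ffi_{j,M}\ffi_{k,M} = \tfrac1M(B_M\bm u)_j$ by the mass-matrix computation above, while the right integral becomes $\sum_k w_k\int_\T\nabla\ffi_{j,M}\cdot\nabla\ffi_{k,M} = -\tfrac1M(\Delta_M\bm w)_j$ by the stiffness-matrix computation. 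Hence the variational identity holds for every $\psi\in\mathfrak p_M$ if and only if $B_M\bm u = -\Delta_M\bm w$ componentwise, which is the assertion. The only mildly delicate point — and the one I would treat most carefully — is bookkeeping the periodic boundary terms so that the tridiagonal-plus-corners structure of $B_M$ and $\Delta_M$ comes out exactly as written in \eqref{eq:BN} and \eqref{eq:DeltaM}; there is no real analytic obstacle here, just the need to handle the wrap-around indices $0\equiv M$ and $M+1\equiv 1$ consistently.
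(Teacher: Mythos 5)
Your proposal is correct and follows essentially the same route as the paper: expand $\pi_M(\bm u)$ and $\pi_M(\bm w)$ on the nodal basis $(\ffi_{k,M})_k$, compute the mass-matrix integrals $\int_\T \ffi_{i,M}\ffi_{j,M}=\frac1M\bigl(\frac23\mathbf{1}_{i=j}+\frac16\mathbf{1}_{|i-j|=1}\bigr)$ and stiffness-matrix integrals $\int_\T \nabla\ffi_{i,M}\cdot\nabla\ffi_{j,M}=M\bigl(2\mathbf{1}_{i=j}-\mathbf{1}_{|i-j|=1}\bigr)$ (indices modulo $M$), and test the variational identity against each basis function. Your explicit numerical values and the identification with $\frac1M B_M$ and $-\frac1M\Delta_M$ match the paper's computation, so there is nothing to add.
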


% \begin{proof}
%   $\mathfrak{p}_M$ is the vector space spanned by the functions $\ffi_{k,M}(x)\coloneqq\ffi(M(x-x_k))$ where $\ffi(x) \coloneqq (1-|x|)^+$, so the 
% r.h.s. of the equivalence \eqref{eq:elemfin1} boils down to
%   \[\int_\T \ffi_{k,M}(x) \pi_M(\bm{u})(x) \,\dd x = \int_\T \nabla \ffi_{k,M}(x) \cdot \nabla \pi_M(\bm{w})(x)\,\dd x,\]
%   for $k\in\{ 1, \dots,M\}$, and one checks that
%   \begin{align*}
%     \int_{\T}  \ffi_{i,M}(x) \ffi_{j,M}(x) \,\dd x &= \frac{1}{M}\left(\frac23\mathbf{1}_{i=j}+\frac16 \mathbf{1}_{|i-j|=1} \right),\\
%     \int_{\T} \nabla \ffi_{i,M}(x) \cdot\nabla \ffi_{j,M}(x) \,\dd x &= M(2\mathbf{1}_{i=j}- \mathbf{1}_{|i-j|=1} ),
%   \end{align*}
%   where the equality $|i-j|=1$ has to be understood modulo $M$. Expanding $\pi_M(\bm{u})$ and $\pi_M(\bm{w})$ on the basis $(\ffi_{k,M})_{1\leq k\leq M}$, we get the equivalence \eqref{eq:elemfin1}. Formula \eqref{eq:elemfin2} is obtained in the same fashion, expanding $\pi_M(\bm{w})$ on the basis. 
% \end{proof}
Recalling $\widetilde{\bm{u}}=\bm{u}-\meanM{\bm{u}}\mathbf{1}_M$ and Definition~\ref{def:A}, we infer from Proposition~\ref{prop:deltamat} that $-(\widetilde{\bm{u}}|\Delta_M^{-1}\widetilde{\bm{u}})_M\geq 0$. This enables us to introduce the following norm $\|\cdot\|_{-1,M}$, which is a discrete counterpart of the $H^{-1}(\T)$ norm.
\begin{dfn}\label{def:h-1} For $\bm{u}\in\RR^M$, we
define 
  \begin{align*}
    \|\bm{u}\|_{-1,M}\coloneqq\sqrt{-(\widetilde{\bm{u}}|\Delta_M^{-1}\widetilde{\bm{u}})_M + \meanM{\bm{u}}^2}.
  \end{align*}
This is a hilbertian norm on $\RR^M$, whose associated inner-product is given by the following formula, for $\bm{u},\bm{v}\in\RR^M$: 
\begin{align*}
    (\bm{u}|\bm{v})_{-1,M} := (\widetilde{\bm{u}}|\Delta_M^{-1}\widetilde{\bm{v}})_M +\meanM{\bm{u}}\meanM{\bm{v}}.
\end{align*}
\end{dfn}
% \begin{rmk} The above definition is reminiscent of the equality \comment{J'aurais tendance à retirer cette remarque pour alléger} \[\|\ffi-\mean{\ffi}\|_{H^{-1}(\T)}^2 =-\int_{\T} (\ffi-\mean{\ffi}) \psi ,\] 
% where $\psi$ is the unique solution of $-\Delta \psi = \ffi-\mean{\ffi}$.
% \end{rmk}
\begin{prop}\label{prop:defsobneg}       
We have the uniform equivalence:
%of norms (with constants independent of $M$) 
  \begin{align}\label{sim:equiv}      
  M  \|\pi_M(\cdot)\|_{H^{-1}(\T)}+ \|\pi_M(\cdot)\|_{L^2(\T)} \sim M \|\cdot\|_{-1,M} +\|\pi_M(\cdot)\|_{L^2(\T)}.
  \end{align}
Moreover % there exists $c_{-1,2}>0$ such that 
for any $\bm{u}\in\RR^M$, 
  \begin{align}\label{borne-12}
 \|\bm{u}\|_{-1,M} \, \leq  \,   \|\bm{u}\|_{2,M}.
  \end{align} 
\end{prop}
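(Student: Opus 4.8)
The plan is to prove the two assertions of Proposition~\ref{prop:defsobneg} separately, starting with the easier bound \eqref{borne-12} and then addressing the equivalence \eqref{sim:equiv}.

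For \eqref{borne-12}, I would work in the orthonormal eigenbasis $(\bm{w}_k)_{0\leq k\leq M-1}$ of $-\Delta_M$ introduced in the proof of Proposition~\ref{prop:pw}, with eigenvalues $\lambda_k = 4M^2\sin^2(\pi k/M)$, noting $\lambda_0 = 0$ and $\bm{w}_0 = \mathbf{1}_M$ (up to normalization for the rescaled inner product). Decomposing $\bm{u} = \sum_k (\bm{u}|\bm{w}_k)_M \bm{w}_k$, the projection $\widetilde{\bm{u}}$ kills the $k=0$ mode, so $-(\widetilde{\bm{u}}|\Delta_M^{-1}\widetilde{\bm{u}})_M = \sum_{k\geq 1} \lambda_k^{-1} |(\bm{u}|\bm{w}_k)_M|^2$ while $\meanM{\bm{u}}^2 = |(\bm{u}|\bm{w}_0)_M|^2$ (again up to the normalization constant, which I would fix carefully). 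Since all nonzero eigenvalues satisfy $\lambda_k \geq 16$ (as established in the proof of Proposition~\ref{prop:pw}, via $\sin(x)\geq \tfrac{2}{\pi}x$ on $[0,\pi/2]$ combined with $M\geq 1$), we get $\lambda_k^{-1}\leq 1$, hence $\|\bm{u}\|_{-1,M}^2 \leq \sum_{k\geq 0} |(\bm{u}|\bm{w}_k)_M|^2 = \|\bm{u}\|_{2,M}^2$. The mild subtlety here is bookkeeping the factors of $1/M$ coming from the rescaled inner product $(\cdot|\cdot)_M$ versus the plain one; but $\meanM{\bm{u}}^2$ is precisely the squared coefficient of the normalized constant vector, so everything is consistent.

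For the equivalence \eqref{sim:equiv}, the strategy is to compare $\|\pi_M(\bm{u})\|_{H^{-1}(\T)}$ with $\|\bm{u}\|_{-1,M}$ mode by mode, and then invoke Proposition~\ref{prop:elemfin} to relate the finite-element picture to $\Delta_M$. The key identity is \eqref{eq:elemfin1}: the Riesz representative in $\mathfrak{p}_M$ of $\pi_M(\bm{u})$ for the $\dot H^1(\T)$ inner product is $\pi_M(\bm{w})$ where $-\Delta_M\bm{w} = B_M\bm{u}$. Thus $\|\pi_M(\bm{u})\|_{\dot H^{-1}(\T)}^2$ (computed against test functions in $\mathfrak{p}_M$, which suffices up to uniform constants by density/projection arguments — or more precisely because $\pi_M(\bm u)\in \mathfrak p_M$ and one can compare the full $\dot H^{-1}$ norm with its restriction to $\mathfrak p_M$) equals $\int_\T |\nabla\pi_M(\bm{w})|^2 = -(\bm{w}|\Delta_M\bm{w})_M = (\bm{w}|B_M\bm{u})_M = (\bm{u}| B_M (-\Delta_M)^{-1} B_M \bm{u})_M$ on the zero-mean subspace. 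Since $B_M = \mathrm{Id} + \tfrac16 \Delta_M/M^2$ commutes with $\Delta_M$ and, crucially, is uniformly (in $M$) spectrally equivalent to the identity on $\mathrm{Ran}(\Delta_M)$ — its eigenvalues are $\tfrac23 + \tfrac13\cos(2\pi k/M) \in [\tfrac13, 1]$ — the operator $B_M(-\Delta_M)^{-1}B_M$ is uniformly equivalent to $(-\Delta_M)^{-1}$. This gives $\|\pi_M(\bm{u})\|_{\dot H^{-1}(\T)} \sim \|\widetilde{\bm u}\|_{-1,M}$ (the homogeneous part). For the zero modes: the mean of $\pi_M(\bm u)$ over $\T$ is $\meanM{\bm u}$ by the normalization $\int_\T \ffi_{k,M} = 1/M$, so $|\mean{\pi_M(\bm u)}| = |\meanM{\bm u}|$, matching the additional term in $\|\cdot\|_{-1,M}$. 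Combining, $\|\pi_M(\bm u)\|_{H^{-1}(\T)}^2 = \|\pi_M(\bm u)\|_{\dot H^{-1}}^2 + \mean{\pi_M(\bm u)}^2 \sim \|\bm u\|_{-1,M}^2$. Multiplying by $M$ and adding the common term $\|\pi_M(\bm u)\|_{L^2(\T)}$ to both sides yields \eqref{sim:equiv}; note the $L^2$ terms are literally identical on the two sides (the $\T^d$ in the statement is $\T$ since $d=1$ here), so nothing is needed there.

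The main obstacle I anticipate is the $\dot H^{-1}(\T)$ computation for a function that lives in the finite-dimensional space $\mathfrak p_M$: the honest $\dot H^{-1}(\T)$ norm is a supremum over all of $\dot H^1(\T)$, not just over $\mathfrak p_M$, so one must argue that restricting to $\mathfrak p_M$ only changes the norm by a uniform constant. This follows from the standard finite-element estimate (the avatar of Bramble–Hilbert, our Lemma~\ref{lem:bh}): given $\psi\in \dot H^1(\T)$, its interpolant $\iota_M(\psi)\in\mathfrak p_M$ satisfies $\|\psi - \iota_M(\psi)\|_{\dot H^1} \lesssim \|\psi\|_{\dot H^1}$ (in fact $\lesssim M^{-1}\|\psi\|_{\dot H^2}$ when $\psi$ is smoother, but the crude bound suffices), while $\int_\T (\psi - \iota_M(\psi))\pi_M(\bm u)$ can be controlled using the $L^2$–$\dot H^{-1}$ duality plus the error estimate — or one simply notes that testing against $\mathfrak p_M$ already captures $\|\pi_M(\bm u)\|_{\dot H^{-1}}$ up to the constant hidden in Lemma~\ref{lem:bh}. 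A clean way to sidestep this is to define the discrete $H^{-1}$ pairing directly via \eqref{eq:elemfin1} and only at the end compare with the continuous $H^{-1}(\T)$ norm on $\mathfrak p_M$; this is routine but deserves to be written out carefully, and I would cite the finite-element literature (or Lemma~\ref{lem:bh}) rather than reprove it.
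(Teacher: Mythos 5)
Your proof of \eqref{borne-12} is fine and is essentially the paper's argument in spectral form (nonzero eigenvalues of $-\Delta_M$ bounded below by $16\geq 1$, mean mode contributing exactly $\meanM{\bm u}^2$). The algebraic skeleton of your treatment of \eqref{sim:equiv} also matches the paper: Proposition~\ref{prop:elemfin}, the identity $6B_M=M^{-2}\Delta_M+6\,\textnormal{I}_M$, the uniform spectral localization of $B_M$ in $[1/3,1]$, and $\mean{\pi_M(\bm u)}=\meanM{\bm u}$ for the mean modes.

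The gap is in the one step that makes the proposition nontrivial: comparing the honest $\dot H^{-1}(\T)$ norm of $\pi_M(\bm u)$ with the supremum restricted to test functions in $\mathfrak p_M$ (equivalently with $\|\nabla\pi_M(\bm w)\|_{L^2(\T)}$). You assert this follows from Lemma~\ref{lem:bh} via the stability $\|\psi-\iota_M(\psi)\|_{\dot H^1}\lesssim\|\psi\|_{\dot H^1}$, but Lemma~\ref{lem:bh} is stated for $H^2$ functions, and in any case $\dot H^1$-stability of the interpolant does not control the off-space contribution $\int_\T \pi_M(\bm u)\,(\psi-\iota_M(\psi))$: bounding it requires the $L^2$ interpolation error $\|\psi-\iota_M(\psi)\|_{L^2}\lesssim M^{-1}\|\psi\|_{\dot H^1}$, which leaves an error of size $M^{-1}\|\pi_M(\bm u)\|_{L^2}$. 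From there you have two ways to close the argument. Either you keep this error term, which is what the paper does: apply C\'ea's lemma to the exact solution $\ffi_M\in\dot H^2(\T)$ of $-\Delta\ffi_M=\pi_M(\bm u)$ (so Lemma~\ref{lem:bh} is legitimately applied to $\ffi_M$, with $\|\ffi_M\|_{\dot H^2}=\|\pi_M(\bm u)\|_{L^2}$) and obtain $\bigl|\,\|\nabla\pi_M(\bm w)\|_{L^2}-\|\pi_M(\bm u)\|_{\dot H^{-1}}\,\bigr|\lesssim M^{-1}\|\pi_M(\bm u)\|_{L^2}$; this residual is precisely why the statement is the weighted equivalence \eqref{sim:equiv} rather than your unweighted one, and it is absorbed after multiplying by $M$. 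Or you prove your stronger claim $\|\pi_M(\bm u)\|_{H^{-1}(\T)}\sim\|\bm u\|_{-1,M}$, which is true in dimension one but needs an extra inverse estimate, e.g. $M^{-1}\|\bm u\|_{2,M}\lesssim\|\widetilde{\bm u}\|_{-1,M}$ for zero-mean $\bm u$ (from the spectral upper bound $\lambda_{\max}(-\Delta_M)\leq 4M^2$), to absorb $M^{-1}\|\pi_M(\bm u)\|_{L^2}$; you never state this ingredient, and the remark that testing against $\mathfrak p_M$ "already captures the norm up to the constant hidden in Lemma~\ref{lem:bh}" is not a substitute for it. As written the argument therefore does not close; it becomes correct either by carrying the $M^{-1}\|\pi_M(\bm u)\|_{L^2}$ error and settling for \eqref{sim:equiv} (the paper's route, and all the proposition claims), or by adding the inverse estimate to justify the stronger unweighted equivalence.
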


\begin{proof}
  %. The fact that $\|\cdot\|_{-1,M}$ defines a norm on $\RR^M$ is straightforward.
          We first observe the uniform equivalences
  \begin{align*}
    \|\pi_M(\bm{u})\|_{L^2(\T)} &\sim \|\pi_M(\widetilde{\bm{u}})\|_{L^2(\T)}+|\meanM{\bm{u}}|,\\
    \|\pi_M(\bm{u})\|_{H^{-1}(\T)} &\sim \|\pi_M(\widetilde{\bm{u}})\|_{H^{-1}(\T)}+|\meanM{\bm{u}}|,\\
    \|\bm{u}\|_{-1,M} &\sim \|\widetilde{\bm{u}}\|_{-1,M} + |\meanM{\bm{u}}|.
  \end{align*}
  Without loss of generality we can therefore establish the uniform equivalence \eqref{sim:equiv} under the assumption $\meanM{\bm{u}} = 0$.

  We have $\|\bm{u}\|_{-1,M}^2 = - (\bm{u}|\Delta_M^{-1}\bm{u})_M=-(\Delta_M\bm{\Phi},\bm{\Phi})_M$ where $\bm{\Phi}\coloneqq-\Delta_M^{-1}\bm{u}$. Thanks to Proposition~\ref{prop:elemfin} we have therefore
\begin{align}\label{eq:hnegM}
  \|\bm{u}\|_{-1,M}^2 = \|\nabla \pi_M(\bm{\Phi})\|_{L^2(\T)}^2.
\end{align}
The matrix $B_M$ defined by \eqref{eq:BN} satisfies $6B_M = M^{-2}\Delta_M+6\textnormal{I}_M$, so it commutes  with $\Delta_M$. In particular, the equation $\bm{u}=-\Delta_M\bm{\Phi}$ is strictly equivalent to 
  \begin{equation*}
    B_M \bm{u} = -  \Delta_M \bm{w},
  \end{equation*}
  where $\bm{w}\coloneqq B_M\bm{\Phi}$. We obtain from Proposition~\ref{prop:elemfin} that this last equation is equivalent to
  \begin{equation*}
    \forall \psi\in \mathfrak{p}_M,\quad \int_{\T} \psi(x)\,\pi_M(\bm{u})(x) \,\dd x = \int_{\T} \nabla\psi(x) \cdot \nabla \pi_M(\bm{w})(x) \,\dd x.
  \end{equation*}
  Since we assumed $\meanM{\bm{u}} = 0$, we have also $\mean{\pi_M(\bm{u})}=0$ and we can therefore solve $-\Delta \ffi_M = \pi_M(\bm{u})$, for a unique $\ffi_M\in\dot{H}^2(\T)$. We have then, by integration by parts,
  \begin{equation*}
    \forall \psi\in \mathfrak{p}_M,\quad \int_{\T} \psi(x)\,\pi_M(\bm{u})(x) \,\dd x = \int_{\T}\nabla\psi(x) \cdot  \nabla \ffi_M(x) \,\dd x.
  \end{equation*}
  In particular, we have established
  \begin{equation*}
    \forall \psi \in \mathfrak{p}_M,\quad\int_{\T} \nabla \psi(x) \cdot (\nabla \pi_M(\bm{w})(x) - \nabla \ffi_M(x)) \,\dd x =0,
  \end{equation*}
  and this equality holds in particular for $\psi=\pi_M(\bm{w})$. We deduce that for each $\psi \in \mathfrak{p}_M$
  \begin{align*}
   & \int_{\T} |\nabla \pi_M(\bm{w})(x) - \nabla \ffi_M(x)|^2 \,\dd x\\
     &\qquad = \int_{\T} (\nabla\pi_M(\bm{w})(x)-\nabla \ffi_M(x)+\nabla\psi(x)-\nabla\pi_M(\bm{w})(x))\cdot (\nabla \pi_M(\bm{w})(x)-\nabla \ffi_M(x)) \,\dd x \\
    &\qquad = \int_{\T} (\nabla\psi(x)-\nabla \ffi_M(x))\cdot (\nabla \pi_M(\bm{w})(x)-\nabla \ffi_M(x)) \,\dd x,
  \end{align*}
  and we get by the Cauchy-Schwarz inequality 
  \[ \|\nabla\pi_M(\bm{w})-\nabla \ffi_M\|_{L^2(\T)} \leq \inf_{\psi\in\mathfrak{p}_M} \|\nabla \psi-\nabla \ffi_M\|_{L^2(\T)}. \]
  Taking $\psi=\iota_M(\ffi)$ and using successively 
  $\|\nabla f\|_{L^2(\T)} = 2\pi\|f\|_{\dot{H}^1(\T)}$ for  $f=\iota_M(\ffi)- \ffi_M\in \dot{H}^1(\T)$
  and  the third estimate of Lemma~\ref{lem:bh}, we get  
\begin{align*}
  \|\nabla\pi_M(\bm{w})-\nabla \ffi_M\|_{L^2(\T)} &\lesssim \|\nabla \iota_M(\ffi)-\nabla \ffi_M\|_{L^2(\T)} \lesssim \|\iota_M(\ffi)- \ffi_M\|_{\dot{H}^1(\T)} \lesssim \frac{1}{M}\|\ffi_M\|_{\dot{H}^2(\T)}.
\end{align*}
Recalling that  $-\Delta\ffi_M = \pi_M(\bm{u})$ we have  $\|\pi_M(\bm{u})\|_{\dot{H}^{-1}(\T)} = \|\nabla \ffi_M\|_{L^2(\T)}$ and $\|\ffi_M\|_{\dot{H}^2(\T)} = \|\Delta\ffi_M \|_{L^2(\T)}=  \|\pi_M(\bm{u})\|_{L^2(\T)}$.
All in all, using 
the reversed triangular inequality we have established
  \begin{align*}
    \Big|\|\nabla\pi_M(\bm{w})\|_{L^2(\T)}-\|\pi_M(\bm{u})\|_{\dot{H}^{-1}(\T)}\Big|\lesssim \frac{1}{M}\|\pi_M(\bm{u})\|_{L^2(\T)}.
  \end{align*}
  To conclude, due to \eqref{eq:hnegM}, it is thus sufficient to prove that $\|\nabla \pi_M(\bm{w})\|_{L^2(\T)} \sim \|\nabla \pi_M(\Phi)\|_{L^2(\T)}$, where we recall $\bm{w}=B_M \Phi$. This last equality implies in particular 
  \[
    \pi_M(\bm{w}) = \frac23 \pi_M(\Phi) + \frac16 \tau_{{\frac{1}{M}}}\pi_M(\Phi)+\frac16\tau_{-\frac{1}{M}}\pi_M(\Phi),
  \]
where we recall  the translation operator $\tau_a$ defined by $\tau_a f(x) = f(x+a)$. We have therefore
  \begin{align}\label{id:grad}
    \nabla \pi_M(\bm{w}) = \frac23 \nabla\pi_M(\Phi) + \frac16 \tau_{\frac{1}{M}}\nabla \pi_M(\Phi)+\frac16\tau_{-\frac{1}{M}}\nabla \pi_M(\Phi).
  \end{align}
Both $\nabla\pi_M(\bm{w})$ and $\nabla \pi_M(\Phi)$ belong to $\mathfrak{s}_M(\T)$ \emph{i.e.} are respectively equal to some functions $\sigma_M(\bm{\lambda})$ and $\sigma_M(\bm{\gamma})$, for some $\bm{\lambda},\bm{\gamma}\in\RR^M$. 

A classical computation shows (see \cite[Exercise 7.4.1]{allaire} for instance) that the spectrum of $B_M$ lies within $[1/3,1]$. In particular, the spectral radius of both $B_M$ and $B_M^{-1}$ are bounded independently of $M$. The identity \eqref{id:grad} shows that $\bm{\lambda} = B_M \bm{\gamma}$ and we have just controlled the euclidean subordinate norms of $B_M$ and $B_M^{-1}$: we have $\|\bm{\gamma}\|_{2,M} \sim \|B_M\bm{\gamma}\|_{2,M}$, and therefore $\|\nabla \pi_M(\bm{w})\|_{L^2(\T)} \sim \|\nabla \pi_M(\Phi)\|_{L^2(\T)}$, thanks to Proposition~\ref{prop:normlp}, concluding the proof of \eqref{sim:equiv}.
  
  Let us turn to the proof of \eqref{borne-12}.
  Using \eqref{prop:pw}, $\| \Delta_M^{-1}\widetilde{\bm{u}}\|_{2,M}\leq
  \| \widetilde{\bm{u}}\|_{2,M}$ and Cauchy-Schwarz inequality entails that
  $-(\widetilde{\bm{u}}|\Delta_M^{-1}\widetilde{\bm{u}})_M\leq
  \| \widetilde{\bm{u}}\|_{2,M}^2$. By  Pythagore's identity, we obtain $\eqref{borne-12}$, since $\bm{u}=\widetilde{\bm{u}}+\meanM{\bm{u}}
  \mathbf{1}_M$ and $\|\meanM{\bm{u}}\mathbf{1}_M\|_{2,M}^2=\meanM{\bm{u}}^2$.
\end{proof}

\begin{prop}\label{prop:calnorm}
  For $\bm{w}\in \mathscr{C}^1([0,T];\mathrm{Ran}(\Delta_M))$, we have
  \[-(\Delta_M^{-1}\bm{w}(t)|\bm{w}'(t))_M = \frac12\frac{\dd}{\dd t}\|\bm{w}(t)\|_{-1,M}^2.\]
%where as usual $(\cdot|\cdot)_M$ denotes the rescaled inner-product on $\RR^M$.
\end{prop}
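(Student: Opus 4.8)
The plan is to reduce the identity to the familiar fact that for a symmetric operator $A$, $\frac{\dd}{\dd t}(A\bm{w}|\bm{w}) = 2(A\bm{w}|\bm{w}')$. Here the subtlety is that $\Delta_M^{-1}$ is only defined on $\mathrm{Ran}(\Delta_M) = \mathrm{Ker}(\Delta_M)^\perp$ and that the norm $\|\cdot\|_{-1,M}$ is built from $-(\widetilde{\bm{u}}|\Delta_M^{-1}\widetilde{\bm{u}})_M + \meanM{\bm{u}}^2$. Since by hypothesis $\bm{w}(t)\in\mathrm{Ran}(\Delta_M)$ for all $t$, we have $\meanM{\bm{w}(t)} = 0$, hence $\widetilde{\bm{w}}(t) = \bm{w}(t)$ and $\|\bm{w}(t)\|_{-1,M}^2 = -(\bm{w}(t)|\Delta_M^{-1}\bm{w}(t))_M$; moreover $\bm{w}'(t)\in\mathrm{Ran}(\Delta_M)$ as well, being the limit of difference quotients of elements of the (closed) subspace $\mathrm{Ran}(\Delta_M)$, so $\meanM{\bm{w}'(t)} = 0$ too. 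This is the one point that needs a line of justification; everything else is then bookkeeping.

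First I would introduce $\bm{\Phi}(t) \coloneqq -\Delta_M^{-1}\bm{w}(t) \in \mathrm{Ran}(\Delta_M)$, which is $\mathscr{C}^1$ in $t$ because $\Delta_M^{-1}$ restricted to $\mathrm{Ran}(\Delta_M)$ is a fixed linear (hence smooth) map and $\bm{w}$ is $\mathscr{C}^1$. Then $\|\bm{w}(t)\|_{-1,M}^2 = -(\bm{w}(t)|\Delta_M^{-1}\bm{w}(t))_M = (\bm{w}(t)|\bm{\Phi}(t))_M = -(\Delta_M\bm{\Phi}(t)|\bm{\Phi}(t))_M$. Differentiating the bilinear expression $t\mapsto -(\Delta_M\bm{\Phi}(t)|\bm{\Phi}(t))_M$ and using the symmetry of $\Delta_M$ (Proposition~\ref{prop:A}) gives
\[
  \frac{\dd}{\dd t}\|\bm{w}(t)\|_{-1,M}^2 = -2(\Delta_M\bm{\Phi}(t)|\bm{\Phi}'(t))_M = 2(\bm{w}(t)|\bm{\Phi}'(t))_M.
\]
Now $\bm{\Phi}'(t) = -\Delta_M^{-1}\bm{w}'(t)$ (since $\bm{w}'(t)\in\mathrm{Ran}(\Delta_M)$ the right-hand side makes sense and equals the derivative of $\bm{\Phi}$), so $(\bm{w}(t)|\bm{\Phi}'(t))_M = -(\bm{w}(t)|\Delta_M^{-1}\bm{w}'(t))_M$. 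Using once more the symmetry of $\Delta_M$ — equivalently of $\Delta_M^{-1}$ on $\mathrm{Ran}(\Delta_M)$ — this equals $-(\Delta_M^{-1}\bm{w}(t)|\bm{w}'(t))_M$. Altogether $\frac{\dd}{\dd t}\|\bm{w}(t)\|_{-1,M}^2 = -2(\Delta_M^{-1}\bm{w}(t)|\bm{w}'(t))_M$, which is the claim after dividing by $2$.

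The only genuine obstacle, as noted, is the verification that staying in $\mathrm{Ran}(\Delta_M)$ propagates to the derivative and that the symmetry of $\Delta_M^{-1}$ on this subspace is legitimate; both follow immediately from $\mathrm{Ran}(\Delta_M)$ being a linear subspace on which $\Delta_M$ is a symmetric isomorphism, so there is really no hard step here. One could equivalently avoid $\bm{\Phi}$ entirely and argue directly: write $\|\bm{w}(t)\|_{-1,M}^2 = -(\bm{w}(t)|\Delta_M^{-1}\bm{w}(t))_M$, differentiate the product, and collapse the two resulting terms into one using that $\Delta_M^{-1}$ is self-adjoint on $\mathrm{Ran}(\Delta_M)$. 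I would present the short computation above, flagging the subspace-membership remark explicitly, and nothing more.
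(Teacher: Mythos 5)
Your proposal is correct and follows essentially the same route as the paper: both introduce the potential $\bm{\Phi}(t)=-\Delta_M^{-1}\bm{w}(t)$, note that $\bm{\Phi}'(t)=-\Delta_M^{-1}\bm{w}'(t)$ (using that $\mathrm{Ran}(\Delta_M)$ is preserved under differentiation), and conclude by symmetry. The only cosmetic difference is that the paper symmetrizes via $\sqrt{-\Delta_M}$ before differentiating, whereas you apply the product rule and the symmetry of $\Delta_M$ directly, which is an equally valid bookkeeping choice.
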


\begin{proof}
  If $\bm{v}(t)\coloneqq-\Delta_M^{-1}\bm{w}(t)$, we have $\Delta_M \bm{v}(t) = -\bm{w}(t)$ and therefore $\Delta_M \bm{v}'(t) = -\bm{w}'(t)$, with still $\meanM{\bm{v}'(t)} = 0$. We then have $\bm{v}'(t) = -\Delta_M^{-1}\bm{w}'(t)$. We infer, by symmetry of $\sqrt{-\Delta_M}$, 
  \begin{align*}
    - (\Delta_M^{-1}\bm{w}(t)|\bm{w}'(t))_M &= - \big(\bm{v}(t)|\Delta_M \bm{v}'(t)\big)_M\\
    &=  \Big(\sqrt{-\Delta_M}\bm{v}(t)|\sqrt{-\Delta_M}\bm{v}'(t)\Big)_M\\
                                                       &=\frac12\frac{\dd}{\dd t}  \Big(\sqrt{-\Delta_M}\bm{v}(t)|\sqrt{-\Delta_M}\bm{v}(t)\Big)_M\\
    &=-\frac12\frac{\dd}{\dd t}  (\bm{v}(t)|\Delta_M\bm{v}(t))_M  = \frac12\frac{\dd}{\dd t}  \|\bm{w}(t)\|_{-1,M}^2.
    \mbox{\qedhere}
  \end{align*}
\end{proof}

\subsection{The discrete duality lemma}\label{subsec:disdualem}
We are now all set to state and prove two discrete duality lemmas. 
They are counterparts of Lemma~\ref{lem:duamodbis}  in a semi-discrete setting and 
they are to be applied to an ODE. At the same time, they  generalize Lemma~\ref{lem:duamodbis} since they include an additionnal source term. We first consider the case when this source term is regular (Lemma~\ref{lem:duadis}) and then the case when it isn't (Lemma~\ref{lem:duasing}). The second case amounts to consider an ODE with several Dirac masses in the right hand side. Being able to handle this singular setting will be of crucial importance in order to use these results for stochastic jump processes (see Proposition~\ref{prop:core} below).

\begin{lem}\label{lem:duadis}
  Consider $\bm{\mu}\in \mathscr{C}([0,T];\RR_{>0}^M)$ so that each component is uniformly (w.r.t. to time and index) lower bounded by a positive constant $\alpha>0$. Consider $\bm{f},\bm{r}\in\mathscr{C}([0,T];\RR^M)$. There exists a unique function $z\in\mathscr{C}^1([0,T];\RR^M)$ solving, for some fixed $\bm{z}_0\in\RR^M$, 
  \begin{align*}
    \bm{z}'(t) &= \Delta_M\Big[\bm{z}(t)\odot \bm{\mu}(t)+\bm{f}(t)\Big]+\bm{r}(t),\\
    \bm{z}(0) &= \bm{z}_0.
  \end{align*}
This function satisfies furthermore 
  \begin{multline}\label{ineq:duadis}
    \sup_{t\in[0,T]}\|\bm{z}(t)\|_{-1,M}^2 +  \int_{Q_T} \sigma_M(\bm{z} \odot \bm{\mu}^{1/2})(s,x)^2 \,\dd s\,\dd x  \\
    \leq  \|\bm{z}_0\|_{-1,M}^2 + \int_{0}^T \meanM{\bm{z}(s)}^2 \meanM{\bm{\mu}(s)} \,\dd s\\ + \frac{1}{\alpha} \int_{Q_T} \sigma_M(\bm{f})(s,x)^2 \,\dd s\,\dd x 
    + 2\int_0^T ( \bm{z}(s)|\bm{r}(s))_{-1,M}\,\dd s,
  \end{multline}
where the Hadamard product $\odot$ and the square-root  $\bm{\mu}^{1/2}$ are defined in Subsection~\ref{subsec:not}.
\end{lem}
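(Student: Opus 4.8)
The plan is to mimic the continuous duality argument of Lemma~\ref{lem:duamodbis} in the discrete setting, using the machinery assembled in Subsections~\ref{subsec:A}--\ref{subsec:disdualem}. First I would split off the mean: integrating the equation against $\mathbf{1}_M$ (i.e. applying $\meanM{\cdot}$) and using $\mathrm{Ran}(\Delta_M)\perp\mathbf{1}_M$ gives $\frac{\dd}{\dd t}\meanM{\bm z(t)} = \meanM{\bm r(t)}$, so the mean part evolves only through the error term; this handles the $\meanM{\bm z(0)}^2$ contributions and the relevant pieces of the $\bm r$-term. Then I would work with $\widetilde{\bm z}(t) = \bm z(t) - \meanM{\bm z(t)}\mathbf 1_M\in\mathrm{Ran}(\Delta_M)$, whose equation is $\widetilde{\bm z}'(t) = \Delta_M[\bm z(t)\odot\bm\mu(t)+\bm f(t)] + \widetilde{\bm r}(t)$ (the Laplacian term is already mean-zero).

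The core computation is to test this against $-\Delta_M^{-1}\widetilde{\bm z}(t)$ in the rescaled inner product $(\cdot|\cdot)_M$. By Proposition~\ref{prop:calnorm}, the left-hand side produces $\frac12\frac{\dd}{\dd t}\|\widetilde{\bm z}(t)\|_{-1,M}^2$. For the $\Delta_M$-term, since $-\Delta_M^{-1}$ and $\Delta_M$ are self-adjoint and $\Delta_M\Delta_M^{-1}$ is the identity on $\mathrm{Ran}(\Delta_M)$, the pairing $\bigl(\Delta_M[\bm z\odot\bm\mu+\bm f]\,\big|\,{-}\Delta_M^{-1}\widetilde{\bm z}\bigr)_M = -\bigl(\widetilde{\bm z}\,\big|\,\bm z\odot\bm\mu+\bm f\bigr)_M = -(\bm z\,|\,\bm z\odot\bm\mu)_M + \meanM{\bm z}\meanM{\bm z\odot\bm\mu} - (\widetilde{\bm z}\,|\,\bm f)_M$. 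Now $(\bm z\,|\,\bm z\odot\bm\mu)_M = \|\sigma_M(\bm z\odot\bm\mu^{1/2})\|_{L^2(\T)}^2$ by Proposition~\ref{prop:normlp} (pointwise identity $\|\cdot\|_{2,M}^2 = \|\sigma_M(\cdot)\|_{L^2(\T)}^2$, applied to the vector with components $z_i\mu_i^{1/2}$), which is the good dissipation term; the cross term $\meanM{\bm z}\meanM{\bm z\odot\bm\mu}$ is controlled using the already-obtained evolution of $\meanM{\bm z}$ exactly as in the continuous proof, contributing the $\meanM{\bm z(0)}^2\int_0^T\meanM{\bm\mu(s)}\,\dd s$ term. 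The $\bm f$-term is bounded via Young's inequality with the weight $\bm\mu\geq\alpha$: $|(\widetilde{\bm z}\,|\,\bm f)_M| \leq \frac12(\bm z\,|\,\bm z\odot\bm\mu)_M + \frac{1}{2\alpha}\|\sigma_M(\bm f)\|_{L^2(\T)}^2$ after accounting for the mean (the mean of $\bm f$ interacts with $\meanM{\bm z}$ and is absorbed), giving the factor $\frac1\alpha\int_{Q_T}\sigma_M(\bm f)^2$. Finally the residual term contributes $\bigl(\widetilde{\bm r}(t)\,\big|\,{-}\Delta_M^{-1}\widetilde{\bm z}(t)\bigr)_M$, which by Cauchy--Schwarz and Proposition~\ref{prop:pw} (or directly $\|{-}\Delta_M^{-1}\widetilde{\bm r}\|_{-1,M}\leq$ controllable) is bounded by $\|\widetilde{\bm r}(t)\|_{-1,M}\|\widetilde{\bm z}(t)\|_{-1,M}\leq \frac{1}{2}\|\widetilde{\bm z}(t)\|_{-1,M}^2 + \frac12\|\widetilde{\bm r}(t)\|_{-1,M}^2$, and $\|\bm r\|_{-1,M}\leq\|\bm r\|_{2,M} = \|\sigma_M(\bm r)\|_{L^2(\T)}$ by \eqref{borne-12} and Proposition~\ref{prop:normlp}.

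After integrating in time, one collects: the supremum of $\|\widetilde{\bm z}(t)\|_{-1,M}^2$ (upgrading the endpoint bound to a sup is free since the right-hand side is monotone in the upper limit, or one keeps the estimate valid for all $t\in[0,T]$), the dissipation integral, and on the right-hand side the initial data term $\|\bm z(0)\|_{-1,M}^2$, the mean term, the $\bm f$-term, and various $\bm r$-terms. The $\bm r$ contributions come in three flavors: one from its effect on $\meanM{\bm z}$ fed into the $\int_0^T\meanM{\bm\mu}$-weighted cross term (this is where $T\int_0^T\meanM{\bm\mu(s)}\,\dd s$ appears, via $\meanM{\bm z(t)}^2 \lesssim \meanM{\bm z(0)}^2 + T\int_0^T\meanM{\bm r}^2$ and $\meanM{\bm r}^2\leq\|\bm r\|_{2,M}^2$), one from its effect through the $\frac1\alpha$ factor when it enters like $\bm f$, and one from the direct $-\Delta_M^{-1}$ pairing giving the bare $T$ factor after time integration. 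Absorbing the $\frac12\|\widetilde{\bm z}\|_{-1,M}^2$ term that arises from the residual Cauchy--Schwarz into the left side, reassembling $\|\bm z\|_{-1,M}^2 = \|\widetilde{\bm z}\|_{-1,M}^2 + \meanM{\bm z}^2$, and bookkeeping the constants through the arbitrary Young parameter $a>0$ (which lets one split ``clean'' initial/source terms with weight $1+a$ from the residual terms with weight $1+a^{-1}$) yields exactly \eqref{ineq:duadis}.

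The main obstacle I anticipate is the careful bookkeeping of the mean component: unlike the continuous case where $\meanM{\bm z}$ was conserved, here it drifts under $\bm r$, so it reappears inside the cross term $\meanM{\bm z}\meanM{\bm z\odot\bm\mu}$ and must be re-estimated in terms of the initial mean plus a time-weighted $L^2$ norm of $\bm r$, which is precisely what generates the somewhat baroque coefficient $\bigl(T + T\int_0^T\meanM{\bm\mu(s)}\,\dd s + \frac1\alpha\bigr)$ in front of $\int_{Q_T}\sigma_M(\bm r)^2$. A secondary technical point is making sure all the norm identities/equivalences invoked (Propositions~\ref{prop:normlp}, \ref{prop:pw}, \ref{prop:calnorm}, \eqref{borne-12}) are applied to the correct vectors — in particular recognizing $(\bm z\,|\,\bm z\odot\bm\mu)_M$ as $\|\sigma_M(\bm z\odot\bm\mu^{1/2})\|_{L^2(\T)}^2$ rather than as a $\pi_M$-based quantity — but this is routine given the dictionary already established.
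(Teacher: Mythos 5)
Your overall architecture (splitting off the drifting mean, testing the mean-free part against $-\Delta_M^{-1}\widetilde{\bm z}$, using Proposition~\ref{prop:calnorm} for the time derivative, the $\bm\mu$-weighted Young inequality for $\bm f$, and recombining $\|\bm z\|_{-1,M}^2=\|\widetilde{\bm z}\|_{-1,M}^2+\meanM{\bm z}^2$) matches the paper's proof. The genuine gap is in your treatment of the residual term. You bound $\bigl(\widetilde{\bm r}(t)\,\big|\,{-}\Delta_M^{-1}\widetilde{\bm z}(t)\bigr)_M\leq \tfrac12\|\widetilde{\bm z}(t)\|_{-1,M}^2+\tfrac12\|\widetilde{\bm r}(t)\|_{-1,M}^2$ and then claim the first piece can be ``absorbed into the left side''. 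It cannot: after integrating in time this piece becomes $\tfrac12\int_0^t\|\widetilde{\bm z}(s)\|_{-1,M}^2\,\dd s$, while the left-hand side controls the \emph{pointwise} quantity $\tfrac12\|\widetilde{\bm z}(t)\|_{-1,M}^2$ plus a dissipation integral in the $\bm\mu$-weighted $\ell^2$ norm of $\bm z$ --- a different norm. Absorption into $\sup_t\|\widetilde{\bm z}(t)\|_{-1,M}^2$ only works if $T<1$; otherwise you need Gr\"onwall (producing an $e^{cT}$ factor absent from \eqref{ineq:duadis}) or a $T$-dependent Young parameter (producing constants different from the ones claimed). So the step ``yields exactly \eqref{ineq:duadis}'' is not justified as written, and your bookkeeping of the three $\bm r$-contributions is inconsistent with your own scheme: in your route $\bm r$ never ``enters like $\bm f$'' through the $1/\alpha$ factor, and the bare $T$ in the coefficient actually originates from $\sup_t\meanM{\bm z(t)}^2$ via the drifted mean, not from the $-\Delta_M^{-1}$ pairing.

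The paper avoids this issue by a different (and key) manipulation of the residual: using symmetry it writes the pairing as $-\bigl(\widetilde{\bm z}(t)\,\big|\,\Delta_M^{-1}\widetilde{\bm r}(t)\bigr)_M$ and groups $\Delta_M^{-1}\widetilde{\bm r}$ \emph{together with} $\bm f$ inside the $\bm\mu$-weighted Young inequality. The quadratic term to be absorbed is then $\tfrac12(\widetilde{\bm z}|\widetilde{\bm z}\odot\bm\mu)_M$, which is absorbed pointwise in time by the dissipation (leaving the $\meanM{\bm z}^2\meanM{\bm\mu}$ correction), and the price is $\tfrac{1}{2\alpha}\|\bm f+\Delta_M^{-1}\widetilde{\bm r}\|_{2,M}^2$, estimated via Proposition~\ref{prop:pw} ($\|\Delta_M^{-1}\widetilde{\bm r}\|_{2,M}\leq\|\bm r\|_{2,M}$) and the convexity inequality with parameter $a$; no Gr\"onwall is needed and the stated constants come out exactly. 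If you replace your Cauchy--Schwarz-in-$\|\cdot\|_{-1,M}$ step by this grouping, the rest of your argument goes through and coincides with the paper's proof.
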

\begin{proof}
  Existence and uniqueness of $\bm{z}$ are straightforward, the ODE being linear with continuous coefficients. To get the estimate we first notice 
  \begin{align}\label{eq:dermean}
    \meanM{\bm{z}(t)}' = \meanM{\bm{r}(t)},
  \end{align}
  and therefore, recalling the notation $\widetilde{\bm{z}}(t):=\bm{z}(t)-\meanM{\bm{z}(t)}$,
  \begin{align*}
 \bm{z}'(t)=\widetilde{\bm{z}}'(t) +\meanM{\bm{r}(t)}.
  \end{align*}
Now, taking the inner-product of the ODE with the vector $\Delta_M^{-1}\widetilde{\bm{z}}(t)$, we get, using the symmetry of $\Delta_M$ and the fact $\Delta_M^{-1} \widetilde{\bm{z}}(t) \in \textnormal{Span}_\RR(\mathbf{1}_M)^\perp$ (see Subsection~\ref{subsec:A}), 
  \begin{equation*}
    -\Big(\Delta_M^{-1}\widetilde{\bm{z}}(t)\big|\widetilde{\bm{z}}'(t)\Big)_M + \Big(\widetilde{\bm{z}}(t)\big|\bm{z}(t)\odot \bm{\mu}(t)\Big)_M = -\Big(\widetilde{\bm{z}}(t)\big|\bm{f}(t)\Big)_M-\Big(\widetilde{\bm{z}}(t)\big|\Delta_M^{-1}\bm{\widetilde{r}}(t)\Big)_M.
  \end{equation*}
  We use Proposition~\ref{prop:calnorm} to identify the first term of the l.h.s. and get
\begin{equation}\label{eq:calnorm}
    \frac12 \frac{\dd}{\dd t} \|\widetilde{\bm{z}}(t)\|_{-1,M}^2 + \Big(\widetilde{\bm{z}}(t)\big|\bm{z}(t)\odot \bm{\mu}(t)\Big)_M =  -\Big(\widetilde{\bm{z}}(t)\big|\bm{f}(t)\Big)_M-\Big(\widetilde{\bm{z}}(t)\big|\Delta_M^{-1}\bm{\widetilde{r}}(t)\Big)_M.
  \end{equation}
  Using Cauchy-Schwarz's inequality and that the entries of $\bm{\mu}(t)$ are all lower-bounded by $\alpha>0$ we have the following inequality, for any vector $\bm{g}\in\RR^M$ (using the inequality $2ab\leq a^2+b^2$)
  \begin{align*}
    \left|\Big( \bm{\widetilde{z}}(t)\big| \bm{g}\Big)_M\right| =  \|\bm{\widetilde{z}}(t)\|_{2,M} \|\bm{g}\|_{2,M} \leq \frac12 \Big(\bm{\widetilde{z}}(t)|\bm{\widetilde{z}}(t)\odot \bm{\mu}(t)\Big)_M + \frac{1}{2\alpha}\|\bm{g}\|_{2,M}^2.
  \end{align*}

% \begin{align*}
%     \left|\Big( \bm{\widetilde{z}}(t)\big| \bm{g}\Big)_M\right| &=  \|\bm{\widetilde{z}}(t)\|_{2,M} \|\bm{g}\|_{2,M} \\
%      &\leq \frac{1}{\sqrt{\alpha}}\|\bm{\widetilde{z}}(t)\odot \bm{\mu}(t)^{1/2}\|_{2,M} \|\bm{g}\|_{2,M} \\
%      &\leq \frac12 \|\widetilde{\bm{z}}(t)\odot \bm{\mu}(t)^{1/2}\|_{2,M}^2 + \frac{1}{2\alpha}\|\bm{g}\|_{2,M}^2 \\
%     &= \frac12 \Big(\bm{\widetilde{z}}(t)|\bm{\widetilde{z}}(t)\odot \bm{\mu}(t)\Big)_M + \frac{1}{2\alpha}\|\bm{g}\|_{2,M}^2.
%   \end{align*}
  Using this estimate in \eqref{eq:calnorm} with $\bm{g} \coloneqq \bm{f}(t)$ and the definition $\bm{\widetilde{z}}(t) \coloneqq \bm{z}(t)-\meanM{\bm{z}(t)}$ we get
%   \begin{multline*}
%     \frac12 \frac{\dd}{\dd t} \|\widetilde{\bm{z}}(t)\|_{-1,M}^2 +  \Big(\widetilde{\bm{z}}(t)\Big|\bm{z}(t)\odot \bm{\mu}(t)\Big)_M \\\leq \frac12 \Big(\bm{\widetilde{z}}(t)\Big|\bm{\widetilde{z}}(t)\odot \bm{\mu}(t)\Big)_M + \frac{1}{2\alpha}\|\bm{f}(t)\|_{2,M}^2-\Big(\widetilde{\bm{z}}(t)\big|\Delta_M^{-1}\bm{\widetilde{r}}(t)\Big)_M,
%     \end{multline*}
% which, after expanding the definition $\bm{\widetilde{z}}(t) \coloneqq \bm{z}(t)-\meanM{\bm{z}(t)}$, becomes 
  \begin{multline*}
    \frac12 \frac{\dd}{\dd t} \|\widetilde{\bm{z}}(t)\|_{-1,M}^2 +  \Big(\bm{z}(t)\Big|\bm{z}(t)\odot \bm{\mu}(t)\Big)_M \leq \meanM{\bm{z}(t)} \meanM{\bm{z}(t)\odot\bm{\mu}(t)} + \frac12 \Big(\bm{\widetilde{z}}(t)\Big|\bm{\widetilde{z}}(t)\odot \bm{\mu}(t)\Big)_M \\+ \frac{1}{2\alpha}\|\bm{f}(t)\|_{2,M}^2-\Big(\widetilde{\bm{z}}(t)\big|\Delta_M^{-1}\bm{\widetilde{r}}(t)\Big)_M.
  \end{multline*}
  Using once more the definition $\bm{\widetilde{z}}(t) \coloneqq \bm{z}(t)-\meanM{\bm{z}(t)}$ we get eventually 
  \begin{align*}
     \frac{\dd}{\dd t} \|\widetilde{\bm{z}}(t)\|_{-1,M}^2 +  \Big(\bm{z}(t)\Big|\bm{z}(t)\odot \bm{\mu}(t)\Big)_M \leq  \frac{1}{\alpha}\|\bm{f}(t)\|_{2,M}^2+\meanM{\bm{z}(t)}^2\meanM{\bm{\mu}(t)}-2\Big(\widetilde{\bm{z}}(t)\big|\Delta_M^{-1}\bm{\widetilde{r}}(t)\Big)_M.
  \end{align*}
  Now, note on one hand that $\widetilde{\bm{z}}(t) \perp \mathbf{1}_M$, for the $(\cdot|\cdot)_{-1,M}$ inner-product and on the other hand, because of \eqref{eq:dermean},
  \begin{align*}
\frac12 \frac{\dd}{\dd t} \meanM{\bm{z}(t)}^2 = \meanM{\bm{z}(t)}\meanM{\bm{r}(t)}. 
    \end{align*}
Adding this last quantity to both sides of the estimate and integrating in time, we recover \eqref{ineq:duadis}, since for any vector $\bm{u}\in\RR^M$,  $\|\bm{u}\|_{2,M} = \|\sigma_M(\bm{u})\|_{L^2(\T)}$. $\qedhere$
\end{proof}

%\begin{cor}\label{coro:duadis}
%  Under the assumptions of Lemma~\ref{lem:duadis} we have furthermore the estimate
%  \begin{multline}\label{ineq:duadiscor}
%    \sup_{t\in[0,T]}\|\bm{z}(t)\|_{-1,M}^2 + \frac12 \int_{Q_T} \sigma_M(\bm{z} \odot \bm{\mu}^{1/2})(s,x)^2 \,\dd s\,\dd x  \\
%    \leq  \|\bm{z}_00\|_{-1,M}^2 + \int_{0}^T \meanM{\bm{z}(s)}^2 \meanM{\bm{\mu}(s)} \,\dd s\\ + \frac{1}{\alpha} \int_{Q_T} \sigma_M(\bm{f})(s,x)^2 \,\dd s\,\dd x 
%    + \frac{2}{\alpha}\int_0^T \|\bm{r}(s)\|_{2,M}^2 \,\dd s.
%  \end{multline}
%\end{cor}
%\begin{proof}
% Recalling Definition~\ref{def:h-1} we infer by Cauchy-Schwarz's inequality
% \begin{align*}
%    (\bm{z}(s)|\bm{r}(s))_{-1,M} \leq \|\widetilde{\bm{z}}(s)\|_{2,M}\|\Delta_M^{-1}\widetilde{\bm{r}}(s)\|_{2,M} + \meanM{\bm{z}(s)}\meanM{\bm{r}(s)}.
 %\end{align*}
%Recall that, by Proposition~\ref{prop:pw}, one has $\|\Delta_M^{-1} \widetilde{\bm{r}}(s)\|_{2,M} \leq \|\widetilde{\bm{r}}(s)\|_{2,M}$. Using the previous together with $2ab\leq \frac{\alpha}{2} a^2 + \frac{2}{\alpha} b^2$, where $\alpha>0$ is a lower-bound for the entries of $\bm{\mu}(s)$, we infer 
% \begin{align*}
%    2(\bm{z}(s)|\bm{r}(s))_{-1,M} &\leq \frac{\alpha}{2} \|\bm{z}(s)\|_{2,M}^2 +\frac{2}{\alpha}\|\bm{r}(s)\|_{2,M}^2\\ 
%    &\leq \frac12 \|\bm{z}(s)\odot \bm{\mu}^{1/2}(s)\|_{2,M}^2 +\frac{2}{\alpha}\|\bm{r}(s)\|_{2,M}^2, 
% \end{align*}
% and the conclusion follows. $\qedhere$
%\end{proof}

For the next lemma, we introduce
$0<t_0<t_1<\cdots <t_m<T$ some (fixed) jump times and the intervals $I_k:=]t_k,t_{k+1}[$. We fix also a function $\bm{x}:[0,T]\rightarrow\RR^M$ which is c\`adl\`ag (continuous right and limited left) and $\mathscr{C}^1$ inside each of the intervals $I_k$ and has jump discontinuities  $\bm{a}_k:=\bm{x}(t_k)-\bm{x}(t_k^-)$ at each $t_k$. Note that such a function $\bm{x}$ can be decomposed as $\bm{x} = \bm{x}_R + \bm{x}_J$ where $\bm{x}_R\in\mathscr{C}^1([0,T];\RR^M)$ is the regular part and $\bm{x}_J$ is the jump part, explicitely given by \[\bm{x}_J(t) := \sum_{k=1}^m \bm{a}_k \mathbf{1}_{t\geq t_k}.\]

\begin{lem}\label{lem:duasing}
  Consider $\bm{\mu}\in \mathscr{C}([0,T];\RR_{>0}^M)$ so that each component is uniformly (w.r.t. to time and index) lower bounded by a positive constant $\alpha>0$. Consider $\bm{f}\in\mathscr{C}([0,T];\RR^M)$ and $\bm{x}$ a c\`adl\`ag piecewise $\mathscr{C}^1$ function just as above, vanishing at $0$. There exists a unique piecewise $\mathscr{C}^1$ function $z$ with walues in $\RR^M$ solving, for some fixed $\bm{z}_0\in\RR^M$,
  \begin{equation*}
    \bm{z}(t) = \bm{z}_0 + \int_0^t \Delta_M\Big[\bm{z}(s)\odot \bm{\mu}(s)+\bm{f}(s)\Big]\,\dd s+\bm{x}(t).
  \end{equation*}
This function satisfies furthermore, for any $t\in [0,T]$,
\begin{multline}\label{ineq:lem:duasing}
 \|\bm{z}(t)\|_{-1,M}^2 + \int_{Q_t} \sigma_M(\bm{z} \odot \bm{\mu}^{1/2})(s,x)^2 \,\dd s\,\dd x  \\
     \leq  \|\bm{z}_0\|_{-1,M}^2 + \frac{1}{\alpha} \int_{Q_t} \sigma_M(\bm{f})(s,x)^2 \,\dd s\,\dd x +\int_{0}^t \meanM{\bm{z}(s)}^2 \meanM{\bm{\mu}(s)} \,\dd s\\  
    + \sum_{t_k\leq t}  \|\bm{a}_k\|_{-1,M}^2 
          + 2\sum_{t_k\leq t}  (\bm{z}(t_k^-) |\bm{a}_k)_{-1,M} + 2\int_{0}^t ( \bm{z}(s)|\bm{x}_R'(s))_{-1,M}\,\dd s.
  \end{multline}
\end{lem}
\begin{rmk}
Note that the two last terms of the r.h.s. in \eqref{ineq:lem:duasing} are of the same "nature" in the sense that replacing $\bm{x}_R$ by $\bm{x}_J$ in the integral, one formally recovers the corresponding discrete summation.
\end{rmk}
\begin{rmk}
Note also that  $\meanM{\bm{z}(t)}^2 \lesssim \|\bm{z}_0\|_{2,M}^2 + \|\bm{x}(t)\|_{2,M}^2$.
\end{rmk}

\begin{proof}
Given $\bm{z}_0$, $\bm{x}$ and $\bm{f}$, the uniqueness of such a function $\bm{z}$ is straightforward because taking the difference of two hypothetical solutions, one gets a linear homogeneous differential equation with continuous coefficients and $0$ as initial data. For the existence, we first note that if $\bm{x}_J=0$ the equations rewrites (after differentiation) as a simple linear ODE with continuous coefficients. Then, the equation being linear, we only need to treat the case when $\bm{z}_0=\bm{f}=\bm{x}_R =0$ and $m=1$ for which a solution is explicitely given by $\bm{z}(t) = \bm{z}_1\mathbf{1}_{t\geq t_1}$, where $\bm{z}_1$ is the (unique) solution in the case when $\bm{x}=\bm{f}=0$ and $\bm{z}_0=\bm{a}_1$. 
  Using Lemma~\ref{lem:duadis} we claim, for $0\leq k \leq m-1$, that the function $\bm{z}$ satisfies
\begin{multline*}
    \forall t\in I_k,\quad \|\bm{z}(t)\|_{-1,M}^2 + \int_{t_k}^t \int_{\T} \sigma_M(\bm{z} \odot \bm{\mu}^{1/2})(s,x)^2 \,\dd s\,\dd x  \\
    \leq  \|\bm{z}(t_k)\|_{-1,M}^2 + \int_{t_k}^t \meanM{\bm{z}(s)}^2 \meanM{\bm{\mu}(s)} \,\dd s\\ + \frac{1}{\alpha} \int_{t_k}^t \int_{\T} \sigma_M(\bm{f})(s,x)^2 \,\dd s\,\dd x 
    + 2\int_{t_k}^t ( \bm{z}(s)|\bm{x}_R'(s))_{-1,M}\,\dd s.
  \end{multline*}
  In particular, if $k\geq 1$, we have in particular at $t=t_{k}^-$
  \begin{multline*}
\|\bm{z}(t_{k}^-)\|_{-1,M}^2 + \int_{t_{k-1}}^{t_{k}} \int_{\T} \sigma_M(\bm{z} \odot \bm{\mu}^{1/2})(s,x)^2 \,\dd s\,\dd x  \\
    \leq  \|\bm{z}(t_{k-1})\|_{-1,M}^2 + \int_{t_{k-1}}^{t_{k}} \meanM{\bm{z}(s)}^2 \meanM{\bm{\mu}(s)} \,\dd s\\ + \frac{1}{\alpha} \int_{t_{k-1}}^{t_{k}} \int_{\T} \sigma_M(\bm{f})(s,x)^2 \,\dd s\,\dd x 
    + 2\int_{t_{k-1}}^{t_{k}} ( \bm{z}(s)|\bm{x}_R'(s))_{-1,M}\,\dd s.
  \end{multline*}
  Adding all these estimates down to $k=1$ we recover   
\begin{multline*}
    \forall t\in I_k,\quad \|\bm{z}(t)\|_{-1,M}^2 + \int_{0}^t \int_{\T} \sigma_M(\bm{z} \odot \bm{\mu}^{1/2})(s,x)^2 \,\dd s\,\dd x  \\
    \leq  \|\bm{z}_0\|_{-1,M}^2 + \sum_{j=1}^k \Big(\|\bm{z}(t_j)\|_{-1,M}^2 - \|\bm{z}(t_j^-)\|_{-1,M}^2\Big) +\int_{0}^{t} \meanM{\bm{z}(s)}^2 \meanM{\bm{\mu}(s)} \,\dd s\\ + \frac{1}{\alpha} \int_{0}^t \int_{\T} \sigma_M(\bm{f})(s,x)^2 \,\dd s\,\dd x 
    + 2\int_{0}^t ( \bm{z}(s)|\bm{x}_R'(s))_{-1,M}\,\dd s.
  \end{multline*}
  At this point it is important to note that the jumps of $\bm{z}$ are exactly the ones of $\bm{x}$, so that $\bm{z}(t_j) = \bm{a}_j + \bm{z}(t_j^-)$. Thus, 
  $\|\bm{z}(t_j)\|_{-1,M}^2 - \|\bm{z}(t_j^-)\|_{-1,M}^2=
  2(\bm{z}(t_j)|\bm{a}_j)_{-1,M} +
  \|a_j\|_{-1,M}^2$ 
  % The previous estimate may therefore be written 
%\begin{multline*}
%     \forall t\in I_k,\quad \|\bm{z}(t)\|_{-1,M}^2 + \int_{0}^t \int_{\T} \sigma_M(\bm{z} \odot \bm{\mu}^{1/2})(s,x)^2 \,\dd s\,\dd x  \\
 %    \leq  \|\bm{z}_0\|_{-1,M}^2 + \frac{1}{\alpha} \int_{0}^t \int_{\T} \sigma_M(\bm{f})(s,x)^2 \,\dd s\,\dd x +\int_{0}^t \meanM{\bm{z}(s)}^2 \meanM{\bm{\mu}(s)} \,\dd s\\  
 %    + 2\sum_{j=1}^k  (\bm{z}(t_j),\bm{a}_j)_{-1,M} + 2\int_{0}^t ( \bm{z}(s)|\bm{x}_R'(s))_{-1,M}\,\dd s,
 % \end{multline*}
and the proof is over. $\qedhere$
\end{proof}

\section{Quantitative estimates and proof of Theorem~\ref{thm:quantitativeresult}}\label{sec:quantification}
For a function $f$ defined on $[0,T]\times\T$, recalling the definition \eqref{eq:discT} of the discretized torus $\T_M$, we denote by $\widehat{\bm{f}}:[0,T]\rightarrow \RR^M$ the function whose value at time $t$ is the list of values of $f$ at the points $x_k\in \T_M$, for $1\leq k \leq M$. We have then the following proposition. 
\begin{prop}\label{prop:taylor}
  Let $u, v$ be two elements of $L^2(0,T;H^3(\T))$, solution of the system \eqref{eq:SKT:cons}. We have 
\begin{align*}
\partial_t  \widehat{\bm{u}}^M(t) &= 
{\Delta_M}\big[d_1 \widehat{\bm{u}}^{M}(t) + a_{12} \widehat{\bm{u}}^{M}(t)\odot
\widehat{\bm{v}}^{M}(t)\big] + \bm{r}^M(t),\\
\partial_t  \widehat{\bm{v}}^M(t) &= {\Delta_M}\big[d_1 \widehat{\bm{v}}^{M}(t) + a_{21} \widehat{\bm{v}}^{M}(t)\odot \widehat{\bm{u}}^{M}(t)\big] + \bm{s}^M(t),
\end{align*}
where the error terms $\bm{r}^M,\bm{s}^M:(0,T)\rightarrow\RR^M$ satisfy
\begin{align}
\label{bornu}
(\|\bm{r}^M\|_{\infty})_M \text{ and } (\|\bm{s}^M\|_{\infty})_M \operatorname*{\longrightarrow}_{M\rightarrow+\infty} 0,\text{ in }L^1(0,T).
\end{align}\end{prop}
\begin{proof}
For a smooth function $f$ defined on $\T$ we have, by Taylor expansion, for any $h\neq 0$ 
\begin{align*}
\frac{\tau_h f +\tau_{-h}f -2f}{h^2} = f'' + \textnormal{O}_{h\rightarrow 0}(h^2).
\end{align*}
A short computation shows that the operator $h^{-2}(\tau_h+\tau_{-h}-2\textnormal{Id})$ coincides with $\textnormal{D}_{-h}\textnormal{D}_h$ where $\textnormal{D}_h$ is the difference quotient operator defined Subsection~\ref{subsec:sobo} of the Appendix. In particular, using  Proposition~\ref{prop:sobo3}, we infer the following weakened equality if $f$ is only assumed to belong to $H^3(\T)$
\begin{align*}
\frac{\tau_h f +\tau_{-h}f -2f}{h^2} = f'' + \textnormal{o}_{h\rightarrow 0}(1),
\end{align*}
where the topology is still uniform but the rate of convergence is not \emph{a priori} controlled. Still thanks to Proposition~\ref{prop:sobo3}, in the previous equality $f$ can be replaced by the product of any pair of $H^3(\T)$ functions. In particular here, using this remark on $u$, $v$ and $uv$, we infer 
\begin{align*}
\partial_t u &= M^2(\tau_{1/M} + \tau_{-1/M}-2\textnormal{Id})\big[d_1 u + a_{12} uv\big] + r^M,\\
\partial_t v &= M^2(\tau_{1/M} + \tau_{-1/M}-2\textnormal{Id})\big[d_2 v + a_{21} uv\big] +s^M,
\end{align*}
where the error terms satisfy
\begin{align}
\label{bornu:cont}
(r_M)_M \text{ and }(s_M)_M \operatorname*{\longrightarrow}_{M\rightarrow+\infty} 0,\text{ in } L^1(0,T;L^\infty(\T)),
\end{align}
from which one deduces directly \eqref{bornu}. $\qedhere$
\end{proof}

On the other hand, we recall (see \eqref{eq:semimg_u}) that our stochastic process satisfies
\begin{align*}
  \bm{U}^{M,N}(t) &= \bm{U}^{M,N}(0) + \int_0^t \Delta_M\Bigl( d_1\bm{U}^{M,N}(s) + a_{12}\bm{U}^{M,N}(s)\odot \bm{V}^{M,N}(s) \Bigr) \,\dd s + \bm{\mathcal M}^{M,N}(t), \\
  \bm{V}^{M,N}(t) &= \bm{V}^{M,N}(0) + \int_0^t \Delta_M \Bigl( d_2\bm{V}^{M,N}(s) + a_{21}\bm{U}^{M,N}(s)\odot \bm{V}^{M,N}(s) \Bigr) \,\dd s + \bm{\mathcal N}^{M,N}(t),
\end{align*}
where $\bm{\mathcal M}^{M,N}$ is square integrable martingale whose quadratic variation is given by \eqref{eq:predquad} and $\bm{\mathcal N}^{M,N}$ satisfies similar properties.  
By symmetry, we can focus on the first species $\bm{U}^{M,N}$. For compactness of notation, we introduce a new  Poisson random measure $\mathcal N$  and its associated Poisson point process $\{(T_k,Y_k) : k\geq 1\}$ on $\RR_+\times E_M$. It consists in collecting the Poisson random measures $\mathcal N^j$ on the different sites and  the intensity of the new Poisson point process is $ds \otimes \nu_M(dy)$, where $E_M=\RR_+\times \{-1,1\}\times \{1,\ldots, M\}$, $\nu_M(d\rho, d\theta, di)=d\rho\otimes \beta(d\theta)\otimes n_M(di)$, and $n_M(di)=\sum_{1\leq j\leq M} \delta_j$ is the counting measure on the sites $\{1,\ldots, M\}$. The martingale $\bm{\mathcal M}^{M,N}$  can now be written as
\begin{align*}
\bm{\mathcal M}^{M,N}(t)=\sum_{k\geq 1} H(\bm{U}^{M,N}(T_k^-), \bm{V}^{M,N}(T_k^-),Y_k)1_{t\geq T_k}
%\int_0^t\int_{E_M} H(\bm{U}^{M,N}(s^-), \bm{V}^{M,N}(s^-),y) \, \mathcal N(ds, dy)
-\int_0^t \phi(\bm{U}^{M,N}(s), \bm{V}^{M,N}(s))ds,
\end{align*}
where $H$ yields the jumps and $\phi$ the compensation
\begin{align}
    \label{notHphi}
H(\bm{u},\bm{v}, \rho,\theta,i)&= \1_{\rho \leq  2M^2Nu_i\bigl(d_1 + a_{12}v_i\bigr)} \frac{\bm{\mathrm{e}}_{i + \theta} - \bm{\mathrm{e}}_i}{N}, \quad    \phi(\bm{ u},\bm{v})=\Delta_M\Bigl( d_1 \bm{u} + a_{12} \bm{u}\odot \bm{v} \Bigr).
\end{align}
Denoting
\begin{align*}
  \bm{Z}^{M,N}(t) &= \widehat{\bm{u}}^M(t) - \bm{U}^{M,N}(t),
 \quad
%  \bm{W}^{M,N}(t) &= \overline{\bm{v}}^M(t) - \bm{V}^{M,N}(t), \\
  \bm{X}^{M,N}(t)= \int_0^t \bm{r}^M(s)\,\dd s - \bm{\mathcal M}^{M,N}(t),
%  \bm{Y}^{M,N}(t) &= \bm{j}^M(t) - \bm{S}^{M,N}(t),
\end{align*}
we have yet another system satisfied by these quantities
\begin{align}
  \bm{Z}^{M,N}(t) &= \bm{Z}^{M,N}(0) + \int_0^t {\Delta_M}\Bigl( \bm{Z}^{M,N}(s)\odot\bm{\Lambda}^{M,N}(s) + \bm{F}^{M,N}(s) \Bigr) \,\dd s + \bm{X}^{M,N}(t), \label{eq:evolz} 
%  \bm{W}^{M,N}(t) &= \bm{W}^{M,N}(0) + \int_0^t \frac{\Delta_M}{h_M^2}\Bigl( \bm{W}^{M,N}(s)\odot\bm{\Gamma}^{M,N}(s) + \bm{G}^{M,N}(s) \Bigr) \,\dd s + \bm{Y}^{M,N}(t),
\end{align}
where
\begin{align}
  \label{eq:defLambda}\bm{\Lambda}^{M,N}(t) &= d_1 \mathbf 1_M + a_{12}\bm{V}^{M,N}(t), \\
  \label{eq:defW}\bm{W}^{M,N}(t) &= \widehat{\bm{v}}^M(t) - \bm{V}^{M,N}(t),\\
  \label{eq:defF}\bm{F}^{M,N}(t) &= a_{12} \widehat{\bm{u}}^M\odot  \bm{W}^{M,N}(t).
%  \bm{\Gamma}^{M,N}(t) &= d_2 + a_{21}\bm{U}^{M,N}(t), \\
 % \bm{G}^{M,N}(t) &= a_{21}\overline{\bm{v}}^M\odot \bm{Z}^{M,N}(t).
\end{align}
Let us provide a useful estimate, which  allows to control the maritngale terms.
\begin{lem}\label{martinalagale} 
 For any $T>0$,
\begin{multline*}
    \EE\Bigl(\sup_{t\in[0,T]} \|\bm{\mathcal M}^{M,N}(t)\|_{-1,M}^2+\sup_{t\in[0,T]}\|\bm{\mathcal N}^{M,N}(t)\|_{-1,M}^2 \Bigr) \\\lesssim\frac{M^2}{N}\vertiii{\bm{Z}^{M,N}}_{T,M}^2 +\frac{M^2}{N}\vertiii{\bm{W}^{M,N}}_{T,M}^2+T \frac{M^2}{N}.
\end{multline*}
\end{lem}
\begin{proof}[Proof of Lemma~\ref{martinalagale}]
Without loss of generality we can focus on ${\bm{\mathcal{M}}}^{M,N}$. Doob inequality for square integrable martingales (see Corollary 6.2 in Chapter 1.6 in \cite{IW})  ensures 
\begin{align*}
      \EE\Bigl( \sup_{t\in[0,T]} \| \bm{\mathcal M}^{M,N}(t) \|_{2,M}^{2}\Bigr) \lesssim  \EE\left([\langle \bm{\mathcal M}^{M,N} \rangle(T)]_M\right).
      \end{align*}
      Owing to \eqref{borne-12}, it is sufficient to bound the r.h.s. of the previous inequality. For this purpose, we use \eqref{bornVQ}
      %the expression of the quadratic variation $\langle \mathcal M_i^{M,N} \rangle$ found in \eqref{eq:predquad}.
%Moreover, since 
%  \begin{align*}
 % &  \sum_{i=1}^M \Bigl( 2U_i^{M,N}(s)V_i^{M,N}(s)+U_{i+1}^{M,N}(s)V_{i+1}^{M,N}(s) + U_{i-1}^{M,N}(s)V_{i-1}^{M,N}(s) \Bigr) \\
%    &\qquad \qquad \qquad \lesssim \sum_{i=1}^M \left(U_i^{M,N}(s)^2 +V_i^{M,N}(s)^2\right) = \|\bm{U}^{M,N}(t)\|_2^2+ \|\bm{V}^{M,N}(t)\|_2^2,
%&\qquad \qquad \qquad \qquad  \qquad \qquad  \qquad \qquad  \lesssim  \|\bm{U}^{M,N}(0)\|_{1}\|\bm{V}^{M,N}(0)\|_{1}\lesssim C_0^2 M^2,
%\end{align*}
and  get 
 \begin{align*}
  & \EE\left([\langle \bm{\mathcal M}^{M,N} \rangle(T)]_M\right) =\frac{1}{M}\sum_{i=1}^M\EE\Bigl(\langle \mathcal M_i^{M,N}\rangle^2(T) \Bigr)  \\
  & \qquad \qquad \lesssim \frac{1}{M} \frac{M^2}{N}  \int_0^T \EE\Bigl( \| \bm{U}^{M,N}(s)\|_1 + \|\bm{U}^{M,N}(s)\|_2^2 + \| \bm{V}^{M,N}(s)\|_2^2  \Bigr) \dd s .
  \end{align*}
Moreover, $\| \bm{U}^{M,N}(s)\|_1 =\| \bm{U}^{M,N}(0)\|_1$ a.s.
and we recall  that  $\bm{U}^{M,N}(t)=
  \widehat{\bm{u}}^M(t) - \bm{Z}^{M,N}(t)$ and  
  $\bm{V}^{M,N}(t)=
  \widehat{\bm{v}}^M(t) - \bm{W}^{M,N}(t)$ for any $s\geq 0$.
  Adding that  boundedness assumption on the solution of the SKT system and \eqref{condinitborn} (which guarantees \eqref{boundUV})
  ensure that
\[
  T\frac{M^2}{N}\| \bm{U}^{M,N}(0)\|_{1,M} + \frac{M^2}{N}\int_{Q_T}   \sigma_M\bigl(\widehat{\bm{u}}^{M}\bigr)^2+\sigma_M\bigl(\widehat{\bm{v}}^{M}\bigr)^2=T \mathcal{O}\bigl(\frac{M^2}{N}\bigr).
\]
Finally, we obtain
 \begin{align*}
  % & \frac{1}{M}\sum_{i=1}^M\EE\Bigl(\langle \mathcal M_i^{M,N}\rangle^2(T) \Bigr)  \\
 &\EE\left([\langle \bm{\mathcal M}^{M,N} \rangle(T)]_M\right) \\
 &\quad  \lesssim  \frac{M}{N}\int_0^T \EE\bigl( \|\bm{Z}^{M,N}(s)\|_2^2 + \| \bm{W}^{M,N}(s)\|_2^2 \bigr) \, \dd s +T\frac{M^2}{N} \\
  & \quad \lesssim \frac{M^2}{N} \int_{Q_T}  \EE\left( \sigma_M\bigl(\bm{Z}^{M,N}\bigr)(s,x)^2+\sigma_M\bigl(\bm{W}^{M,N}\bigr)(s,x)^2\right)  \,\dd s\,\dd x+T \frac{M^2}{N},
\end{align*}
where we recall that $\|\bm{ u}\|_{2}^2=M\|\bm{ u}\|_{2,M}^2=M\| \sigma_M(\bm{ u}\|_{L^2([0,T])}^2 $. It
 ends the proof recalling definition \eqref{normecheloudiscrete}.
\end{proof}

We can now apply the discrete duality lemma obtained in the previous section to
control the gap $\bm{Z}^{M,N}$. This is the core of the next result and  yields Theorem \ref{thm:quantitativeresult}.
Given a process $\bm{Z} \colon \Omega \times [0,T]\to \mathbb R^M$ defined on a probability space 
$(\Omega, \mathcal F, \mathbb P)$, we consider the discrete analog of the norm $\vertiii{\cdot}_T$ introduced in \eqref{eq:tripnorm}, that is:
\begin{align}
\label{normecheloudiscrete}
  \vertiii{\bm{Z}}_{T,M} \coloneqq \left(\sup_{t\in[0,T]}\EE\left(\|\bm{Z}(t)\|_{-1,M}^2\right) + \EE\left(\Vert\sigma_M (\bm{Z}) \Vert_{L^2(Q_T)}^2 \right)\right)^{1/2}.
\end{align}
\begin{prop}\label{prop:core}
  Let $u, {v}$ be a bounded $L^2(0,T;H^3(\T))$ non-negative solution of the system \eqref{eq:SKT:cons} satisfying \eqref{ineq:small}. There exists constant $\textnormal{C},\textnormal{D}>0$ depending only on the diffusion parameters and $\|u\|_{L^\infty(Q_T)}\|v\|_{L^\infty(Q_T)}$ such that for any $(M,N)\in \mathbb N^2$ satisfying $N\geq M^2\textnormal{D}$, there holds
%  \begin{equation*}
%    \begin{split}
%%    &  \EE\bigl( \Vert \sigma_M\bigl(\bm{Z}^{M,N}\bigr) \Vert_{L^2(Q_T)}^2 + \Vert \sigma_M\bigl(\bm{W}^{M,N}\bigr) \Vert_{L^2(Q_T)}^2 \bigr) \\
%    &  \EE\bigl( \vertiii{\bm{Z}^{M,N}}_{T,M}^2 + \vertiii{\bm{W}^{M,N}}_{T,M}^2 \bigr) \\
%    &\qquad \qquad  \qquad \lesssim  \|\bm{Z}^{M,N}(0)\|_{-1,M}^2 +\|\bm{W}^{M,N}(0)\|_{-1,M}^2 \\
%    & \qquad \qquad  \qquad  \qquad + T\meanM{\bm{Z}^{M,N}(0)}^2 (1+\meanM{\bm{V}^{M,N}(0) } )+T\meanM{\bm{W}^{M,N}(0)}^2 (1+\meanM{\bm{U}^{M,N}(0) } ) \\
%    &\qquad \qquad  \qquad \quad  \hspace{2.5ex} +\left(1+T^2+T^2 \meanM{\bm{U}^{M,N}(0) + \bm{V}^{M,N}(0) } \right) M^{-4}  + T M^2N^{-1}.
%      %\Bigl( 1 +T+ \meanM{\bm{U}^{M,N}(0) + \bm{V}^{M,N}(0)} T \Bigr) \frac{M^3}{N}.
%    \end{split}
%  \end{equation*}
 % \textcolor{red}{
 % \begin{equation}\label{eq:coreineq}
 %   \begin{split}
 %   &   \vertiii{\bm{Z}^{M,N}}_{T,M}^2 + \vertiii{\bm{W}^{M,N}}_{T,M}^2  \\
 %   & \qquad \lesssim  \vertiii{\bm{Z}^{M,N}}_{0,M}^2(1+\meanM{\bm{V}^{M,N}(0) }) + \vertiii{\bm{W}^{M,N}}_{0,M}^2(1+\meanM{\bm{U}^{M,N}(0) }) \\
 %   & \hspace{9em} +\left(1+T^2+T^2 \meanM{\bm{U}^{M,N}(0) + \bm{V}^{M,N}(0) } \right) M^{-4}  + T M^2N^{-1}.
 %   \end{split}
 % \end{equation}
 % }
      \begin{align}\label{eq:coreineq}
           \vertiii{\bm{Z}^{M,N}}_{T,M}^2 + \vertiii{\bm{W}^{M,N}}_{T,M}^2  
         \leq \textnormal{C}\Big(  \EE(A_{T,M,N}(0)) 
          + T \frac{M^2}{N} + \delta_M\Big),
        \end{align}
    where $\vertiii{\cdot}_{T,M}$ is defined by \eqref{normecheloudiscrete}, $(\delta_M)_M\rightarrow 0$ and 
    \begin{multline}\label{eq:defATMN}
        A_{T,M,N}(0):=\|\bm{Z}^{M,N}(0)\|_{-1,M}^2 + T\meanM{\bm{Z}^{M,N}(0)}^2 \meanM{\bm{\Lambda}^{M,N}(0)} 
        \\ + \|\bm{W}^{M,N}(0)\|_{-1,M}^2 + T\meanM{\bm{W}^{M,N}(0)}^2 \meanM{\bm{\Gamma}^{M,N}(0)}.
    \end{multline}
\end{prop}
\begin{rmk}
\label{rem:delta}
The sequence $\delta_M$ is directly linked to the error terms $\bm{r}^M$ and $\bm{s}^M$ introduced in  Proposition~\ref{prop:taylor}. From the proof of this very proposition, it is therefore clear that assuming more regularity for $u,v$, one can give an explicit rate of convergence for $\delta_M$.  For instance if $u,v$ is assumed $L^2(0,T;\mathscr{C}^4(\T))$ one could take $\delta_M=\textnormal{O}(1/M^2)$. 
\end{rmk}

\begin{proof}[Proof of Proposition \ref{prop:core}]
 We apply Lemma \ref{lem:duasing} with $\bm{z}\coloneqq \bm{Z}^{M,N}$ and
    $\bm{x}=\bm{x}_R+\bm{x}_J \coloneqq \bm{X}^{M,N}$ and
     $\bm{f}\coloneqq\bm{F}^{M,N}$ and $\bm{\mu}\coloneqq \bm{\Lambda}^{M,N}$
     recalling the definitions \eqref{eq:defLambda} -- \eqref{eq:defF}.
More explicitely, recalling \eqref{notHphi}, we have here
    \begin{align*}
        {\bm x}_R(t)&=\int_0^t r^M(s)ds+\int_0^t \phi(\bm{U}^{M,N}(s), \bm{V}^{M,N}(s))ds, \\ {\bm x}_J(t)&
        %= -\int_0^t\int_{E_M} H(\bm{U}^{M,N}(s-), \bm{V}^{M,N}(s-),y) \, \mathcal N(ds, dy)\\
        =-\sum_{k\geq 1} H(\bm{U}^{M,N}(T_k^-), \bm{V}^{M,N}(T_k^-),Y_k)1_{t\geq T_k},
        \end{align*}
        where  we recall that $\{(T_k,Y_k) : k\geq 0\}$ is a Poisson point process  on $\RR_+\times E_M$ with intensity $ds \otimes \nu_M(dy)$.
Besides, using that $t \mapsto \meanM{\bm{\Lambda}^{M,N}(t)}$ and $t \mapsto \meanM{\bm{Z}^{M,N}(t)}$ are constant functions, we observe
$$\int_{0}^t \meanM{\bm{z}(s)}^2 \meanM{\bm{\mu}(s)} \,\dd s=T\meanM{\bm{Z}^{M,N}(0)}^2  \meanM{\bm{\Lambda}^{M,N}(0)}.$$
%Adding that
%\begin{align*}
%&\sum_{T_k\leq t} \|H(\bm{U}^{M,N}(T_k^-), \bm{V}^{M,N}(T_k^-),Y_k)\|_{-1,M}^2\\
%&\qquad =\int_0^t\int_{E_M} \|H(\bm{U}^{M,N}(s-), \bm{V}^{M,N}(s-),y) \|_{-1,M}^2 \, \mathcal N(ds, dy),
%\end{align*}
We obtain from Lemma \ref{lem:duasing} that for any $t\leq T$ that
    \begin{align}\label{eq:normZ}
        & \|\bm{Z}^{M,N}(t)\|_{-1,M}^2 + \int_{Q_t} \sigma_M\bigl(\bm{Z}^{M,N} \odot (\bm{\Lambda}^{M,N})^{1/2}\bigr)(s,x)^2 \,\dd s \dd x \notag\\
        & \qquad  \quad  \leq \|\bm{Z}^{M,N}(0)\|_{-1,M}^2 + \frac{1}{d_1} \int_{Q_t} \sigma_M(\bm{F}^{M,N})(s,x)^2 \,\dd s\dd x \notag\\
        & \qquad\qquad \qquad  + T\meanM{\bm{Z}^{M,N}(0)}^2  \meanM{\bm{\Lambda}^{M,N}(0)}+2 \int_{0}^t ( \bm{Z}^{M,N}(s) \, | \, \bm{r}^M(s) )_{-1,M}\,\dd s\nonumber \\  
        & \qquad\qquad \qquad \qquad +{\mathcal R}^{M,N}(t),
    \end{align}
    where  $\mathcal{R}^{M,N}(t)$ is given by
    \begin{multline}\label{def:RMN}
        \mathcal{R}^{M,N}(t)    %\int_{0}^t\int_E \|H(\bm{U}^{M,N}(s-), \bm{V}^{M,N}(s-),y) \|_{-1,M}^2 \, \mathcal{N}(\dd s,\dd y)\\
        %& \qquad -  2\int_{0}^t\int_E (\bm{Z}^{M,N}(s-) \, | \, H(\bm{U}^{M,N}(s-), \bm{V}^{M,N}(s-),y))_{-1,M} \,  \mathcal{N}(\dd s,\dd y)\\
        %& \qquad + 2\int_{0}^t ( \bm{Z}^{M,N}(s) \, | \, \bm{r}^M(s) + \phi(\bm{U}^{M,N}(s), \bm{V}^{M,N}(s)))_{-1,M}\,\dd s \\
        = \sum_{T_k\leq t} \|H(\bm{U}^{M,N}(T_k^-), \bm{V}^{M,N}(T_k^-),Y_k)\|_{-1,M}^2\\
        %\int_{0}^t\int_{E_M} \|H(\bm{U}^{M,N}(s-), \bm{V}^{M,N}(s-),y) \|_{-1,M}^2 \, \mathcal N(\dd s,\dd y)\\
         -  2\sum_{T_k \leq t}  (\bm{Z}^{M,N}(T_k^-) \, | \, H(\bm{U}^{M,N}(T_k^-), \bm{V}^{M,N}(T_k^-),y))_{-1,M}\\
         \qquad + 2\int_{0}^t ( \bm{Z}^{M,N}(s) \, | \, \phi(\bm{U}^{M,N}(s), \bm{V}^{M,N}(s)))_{-1,M}\,\dd s.
    \end{multline}
    Some cancellations will happen for the error term $\mathcal{R}^{M,N}(t)$ when taking the expectation, thanks to the martingale structure. For the moment we keep it as it and focus on the other terms.  Besides, recalling \eqref{borne-12}, we have 
 $\| u\|_{-1,M}\leq \| u\|_{2,M}\leq \| u\|_{\infty}$  and  Cauchy Schwarz inequality entails
 for any $s\geq 0$,
   \begin{align*}
 \vert ( \bm{Z}^{M,N}(s) \, | \, \bm{r}^M(s) )_{-1,M}\vert &\leq
 %\| \bm{Z}^{M,N}(s)\|_{-1,M} \| \bm{r}^M(s) \|_{-1,M} \leq 
 \| \bm{Z}^{M,N}(s)\|_{-1,M} \| \bm{r}^M(s) \|_{\infty}.
   \end{align*}
We plug this estimate in \eqref{eq:normZ}.
We also use  that $\Lambda_i^{M,N} \geq d_1$ and that
    \[
        |\sigma_M(\bm{F}^{M,N})(s,x)| \leq a_{12} \| {u} \|_{L^\infty(Q_T)}|\sigma_M(\bm{W}^{M,N})(s,x)|,
    \]
    as $\hat{\bm{u}}^M$ takes the values of $u$ in the grid. We  obtain
    \begin{multline*}
    \|\bm{Z}^{M,N}(t)\|_{-1,M}^2 +d_1 \int_{Q_t} \sigma_M(\bm{Z}^{M,N})(s,x)^2 \,\dd s \dd x 
        \\
        \leq   \|\bm{Z}^{M,N}(0)\|_{-1,M}^2 + T\meanM{\bm{Z}^{M,N}(0)}^2 \meanM{\bm{\Lambda}^{M,N}(0)} \\
         \qquad + \frac{(a_{12}\|{u}\|_{L^\infty(Q_T)})^2}{d_1}\int_{Q_t} \sigma_M(\bm{W}^{M,N})(s,x)^2 \,\dd s\dd x \\
         + 2\int_0^t \|\bm{Z}^{M,N}(s)\|_{-1,M}\|\bm{r}^M(s)\|_\infty\,\dd s + \mathcal{R}^{M,N}(t).
    \end{multline*}
    As the roles of $\bm{Z}^{M,N}$ and $\bm{W}^{M,N}$ are symmetric in the previous inequality, we have a similar estimate for $\bm{W}^{M,N}$. Thus, by setting
    \[
        \bm{\Gamma}^{M,N}(t) = d_2 + a_{21}\bm{U}^{M,N}(t),
    \]
    and defining $\mathcal{S}^{M,N}(t)$ as $\mathcal{R}^{M,N}(t)$ (exchanging $\bm{U}^{M,N}$ and $\bm{V}^{M,N}$ and replacing $\bm{Z}^{M,N}$ by $\bm{W}^{M,N}$ in \eqref{def:RMN}) we get
    \begin{multline*}
    \|\bm{W}^{M,N}(t)\|_{-1,M}^2 +d_2 \int_{Q_t} \sigma_M(\bm{W}^{M,N})(s,x)^2 \,\dd s \dd x 
        \\
        \leq   \|\bm{W}^{M,N}(0)\|_{-1,M}^2 + T\meanM{\bm{W}^{M,N}(0)}^2 \meanM{\bm{\Gamma}^{M,N}(0)} \\
         + \frac{(a_{21}\|{v}\|_{L^\infty(Q_T)})^2}{d_2}\int_{Q_t} \sigma_M(\bm{Z}^{M,N})(s,x)^2 \,\dd s\dd x \\
         + 2\int_0^t \|\bm{W}^{M,N}(s)\|_{-1,M}\|\bm{s}^M(s)\|_\infty\,\dd s + \mathcal{S}^{M,N}(t).
    \end{multline*}
    Plugging now this inequality in the estimate for $\bm{Z}^{M,N}$ gives us
    \begin{align*}
        %\vertiii{\bm{Z}^{M,N}}_{T,M}^2& \\
        %M^{-1} \sup_{t\in[0,T]}\|\bm{Z}^{M,N}(t)\|_{-1,M}^2 + d_1\int_{Q_T} \sigma_M\bigl(\bm{Z}^{M,N}\bigr)(s,x)^2 \,\dd s\dd x  \\
        &\|\bm{Z}^{M,N}(t)\|_{-1,M}^2 + d_1\int_{Q_t} \sigma_M(\bm{Z}^{M,N})(s,x)^2 \,\dd s \dd x \\
        &\qquad \leq \left( 1+\frac{(a_{12}\|{u}\|_{L^\infty(Q_T)})^2}{d_1d_2}\right) \, A_{T,M,N}(0) \\ %\|\bm{Z}^{M,N}(0)\|_{-1,M}^2 + T\meanM{\bm{Z}^{M,N}(0)}^2 \meanM{\bm{\Lambda}%%M,N}(0) } \\
        %&\qquad \qquad +\frac{(a_{12}\|{u}\|_{L^\infty(Q_T)})^2}{d_1d_2} \biggl( \|\bm{W}^{M,N}(0)\|_{-1,M}^2 + T\meanM{\bm{W}^{M,N}(0)}^2 \meanM{\bm{\Gamma}^{M,N}(0) }\biggr)  \\
        &\qquad\qquad + \frac{1}{d_1}\Bigl(\frac{a_{12}a_{21}\|{u}\|_{L^\infty(Q_T)}\|{v}\|_{L^\infty(Q_T)}}{d_2}\Bigr)^2  \int_{Q_t} \sigma_M(\bm{Z}^{M,N})(s,x)^2 \,\dd s\dd x  \\
        &\qquad \qquad + \frac{(a_{12}\|{u}\|_{L^\infty(Q_T)})^2}{d_1d_2}  \biggl( 2\int_0^t \|\bm{W}^{M,N}(s)\|_{-1,M}\|\bm{s}^M(s)\|_\infty\,\dd s + \mathcal{S}^{M,N}(t)\biggr) \\
        &\qquad \qquad +  2\int_0^t \|\bm{Z}^{M,N}(s)\|_{-1,M}\|\bm{r}^M(s)\|_\infty\,\dd s + \mathcal{R}^{M,N}(t).
    \end{align*}
    By using our bound \eqref{ineq:small} on $\|u\|_{L^\infty(Q_T)} \|v\|_{L^\infty(Q_T)}$,
    we can absorb the term of the third line
    in the l.h.s. of the inequality. Thus, letting $\lesssim$ to depend on these (deterministic and fixed) parameters 
    this yields
    \begin{multline}\label{ineq:rhs}
        \|\bm{Z}^{M,N}(t)\|_{-1,M}^2 + d_1\int_{Q_t} \sigma_M(\bm{Z}^{M,N})(s,x)^2 \,\dd s \dd x \\
         \lesssim  A_{T,M,N}(0)  
         + \mathcal{R}^{M,N}(t) + \mathcal{S}^{M,N}(t) \\
        + \int_0^t \|\bm{Z}^{M,N}(s)\|_{-1,M}\|\bm{r}^M(s)\|_\infty\,\dd s
        \\+\int_0^t \|\bm{W}^{M,N}(s)\|_{-1,M}\|\bm{s}^M(s)\|_\infty\,\dd s,
    \end{multline}
    where $A_{T,M,N,}(0)$ is defined in \eqref{eq:defATMN}.     As before, the r.h.s. of \eqref{ineq:rhs} is invariant exchanging $\bm{Z}^{M,N}$ and $\bm{W}^{M,N}$ so that the same estimate holds for $\bm{W}^{M,N}$ in the l.h.s. (with $d_2$ instead of $d_1$). We will now sum both inequalities and take expectation. For the sake of clarity, we therefore introduce
    \begin{align*}
        \psi(t) &:= \EE\Big(\|\bm{Z}^{M,N}(t)\|_{-1,M}^2\Big) +  \EE\Big(\|\bm{W}^{M,N}(t)\|_{-1,M}^2\Big),\\
        \theta(t) &:= \EE\Big(\int_{Q_t} \sigma_M(\bm{Z}^{M,N})^2\Big) + \EE\Big(\int_{Q_t} \sigma_M(\bm{W}^{M,N})^2\Big),
    \end{align*}
    and we thus infer 
    \begin{multline}\label{ineq:psitheta}
        \psi(t) + \theta(t) \lesssim \EE(A_{T,M,N}(0)) + \EE\big(\mathcal{R}^{M,N}(t)+\mathcal{S}^{M,N}(t)\big) \\
        +\int_0^t \EE\Big(\|\bm{Z}^{M,N}(s)\|_{-1,M}\Big)\|\bm{r}^M(s)\|_\infty\,\dd s \\+\int_0^t \EE\Big(\|\bm{W}^{M,N}(s)\|_{-1,M}\Big)\|\bm{s}^M(s)\|_\infty\,\dd s.
    \end{multline}
    Now, on one hand, thanks to Cauchy-Schwarz inequality we have 
     \begin{align*}
         \EE\Big(\|\bm{Z}^{M,N}(s)\|_{-1,M}\Big) \leq \left(\EE\big(\|\bm{Z}^{M,N}(s)\|_{-1,M}^2\big)\right)^{1/2},
     \end{align*}
     and a similar majoration replacing $\bm{Z}^{M,N}$ by $\bm{W}^{M,N}$, so that
     \begin{align*}
         \EE\Big(\|\bm{Z}^{M,N}(s)\|_{-1,M}\Big) + \EE\Big(\|\bm{W}^{M,N}(s)\|_{-1,M}\Big) \lesssim \psi(s)^{1/2}.
     \end{align*}
     On the other hand, going back to the definition \eqref{def:RMN} of $\mathcal{R}^{M,N}$, one checks that the two last terms on the right hand side form a martingale starting from $0$. Then   taking expectation in \eqref{def:RMN}, the two last term disappear and  we recover 
    \begin{align*}
        \EE\left( {\mathcal R}^{M,N}(t)\right) 
        &=\EE\left(\sum_{T_k\leq t} \|H(\bm{U}^{M,N}(T_k^-), \bm{V}^{M,N}(T_k^-),Y_k)\|_{-1,M}^2 ) \right) =\EE\left(\|\bm{\mathcal M}^{M,N}(t)\|_{-1,M}^2  \right),
    \end{align*}
    where the last identity can be directly obtained from the semimartingale decomposition of $\|\bm{\mathcal M}^{M,N}(t)\|_{-1,M}^2$ (as for the classical proof with canonical euclidian inner product). The same bound applying for $\mathcal{S}^{M,N}(t)$ only replacing the martingale $\bm{\mathcal{M}}^{M,N}$ by $\bm{\mathcal{N}}^{M,N}$. All in all, we infer from \eqref{ineq:psitheta} the following estimate for any $t\leq T$
         \begin{align*}
        \psi(t) + \theta(t) \lesssim C_0 
        +\int_0^t \psi(s)^{1/2} \Big(\|\bm{r}^M(s)\|_\infty + \|\bm{s}^M(s)\|_\infty\Big)\,\dd s,
    \end{align*}
    where 
    \begin{align}\label{eq:defC0}
        C_0 = \EE(A_{T,M,N}(0)) + \sup_{t\in[0,T]} \EE\left(\|\bm{\mathcal M}^{M,N}(t)\|_{-1,M}^2 \right) + \sup_{t\in[0,T]} \EE\left(\|\bm{\mathcal N}^{M,N}(t)\|_{-1,M}^2 \right). 
    \end{align}
          Using the nonlinear Gronwall Lemma~\ref{lem:gron} we infer 
    \begin{align}
        \sup_{t\in[0,T]}\Big(\psi(t) + \theta(t)\Big) \lesssim C_0 +\Big(\int_0^T (\|\bm{r}^M(s)\|_\infty+\|\bm{s}^M(s)\|_\infty)\,\dd s\Big)^2.
    \end{align}
    Since $(\|\bm{r}^M\|_\infty)_M$ and $(\|\bm{s}^M\|_\infty)_M$ both converge to $0$ in $L^1(0,T)$ (see \eqref{bornu}), the squared integral in the third line is a sequence $(\delta_M)_M\rightarrow 0$. We are left  with controlling the  two martingale terms that appear in the definition \eqref{eq:defC0} of $C_0$.
    This is achieved by Lemma~\ref{martinalagale} and  
\eqref{ineq:psitheta} finally yields
     \begin{multline*}
           \vertiii{\bm{Z}^{M,N}}_{T,M}^2 + \vertiii{\bm{W}^{M,N}}_{T,M}^2  
         \lesssim  \EE(A_{T,M,N}(0)) 
          + T \frac{M^2}{N} + \delta_M \\+ \frac{M^2}{N}\Big[\vertiii{\bm{Z}^{M,N}}_{T,M}^2 + \vertiii{\bm{W}^{M,N}}_{T,M}^2\Big].
        \end{multline*}    
The previous inequality can be rewritten replacing $\lesssim$ by $\leq \frac{\textnormal{D}}{2}$ for some $\textnormal{D}>2$. If indeed $N\geq M^2 D$, then $1-\frac{\textnormal{D}M^2}{2N}\geq \frac12$ and the terms in the last line can be absorbed in l.h.s.
\end{proof}

Now we can prove the remaining main result.

\begin{proof}[Proof of Theorem \ref{thm:quantitativeresult}]
We have
\begin{align*}
  \zeta^{M,N} &\coloneqq \pi_M(\bm{U}^{M,N})-u\\
  &\,=\pi_M(\bm{U}^{M,N}-\widehat{\bm{u}}^M) + \pi_M(\widehat{\bm{u}}^M)-u=\pi_M(\bm{Z}^{M,N})+\iota_M(u)-u,
\end{align*}
where the interpolation operator $\iota_M$ is the one used in Lemma~\ref{lem:bh}. Using the triangular inequality, we infer
  \begin{multline}
    \sup_{t\in[0,T]}\EE\Big(\|\zeta^{M,N}(t)\|_{H^{-1}(\T)}^2\Big)+\EE\left(\|\zeta^{M,N}\|_{L^2(Q_T)}^2\right)\\  \leq  \sup_{t\in[0,T]}\EE\Big(\|\pi_M(\bm{Z}^{M,N})(t)\|_{H^{-1}(\T)}^2\Big)+\EE\left(\|\pi_M(\bm{Z}^{M,N})\|_{L^2(Q_T)}^2\right)\\\label{ineq:believeproof}+ \sup_{t\in[0,T]}\|\iota_M(u)-u\|_{H^{-1}(\T)}^2+\|\iota_M(u)-u\|_{L^2(Q_T)}^2.
\end{multline}
Now, using Proposition~\ref{prop:normlp} we have that $\|\pi_M(\bm{Z}^{M,N})\|_{L^2(Q_T)} \leq \|\sigma_M(\bm{Z}^{M,N})\|_{L^2(Q_T)}$, and using the equivalence \eqref{sim:equiv} of Proposition~\ref{prop:defsobneg}, we get for all $t\in[0,T]$
\begin{align*}
  \|\pi_M(\bm{Z}^{M,N})(t)\|_{H^{-1}(\T)}&\lesssim \|\bm{Z}^{M,N}(t)\|_{-1,M} + M^{-1} \|\pi_M(\bm{Z}^{M,N}(t))\|_{L^2(\T)}\\
  & \lesssim\|\bm{Z}^{M,N}(t)\|_{-1,M} + M^{-1}\|\sigma_M(\bm{Z}^{M,N}(t)\|_{L^2(\T)}. 
\end{align*}
Then the  expectation terms in the r.h.s. of \eqref{ineq:believeproof} satisfy the following bound for $M\geq1$
\begin{equation*}
  \sup_{t\in[0,T]}\EE\left(\|\pi_M(\bm{Z}^{M,N})(t)\|_{H^{-1}(\T)}^2\right)+\EE\left(\|\pi_M(\bm{Z}^{M,N})\|_{L^2(Q_T)}^2\right) \lesssim_T \ \vertiii{\bm{Z}^{M,N}}_{T,M}^2, 
  \end{equation*}
  where $\vertiii{\cdot}_{T,M}$ is defined in \eqref{normecheloudiscrete}.
  Using Proposition~\ref{prop:core}, we infer
          \begin{align*}
  \sup_{t\in[0,T]}\EE\left(\|\pi_M(\bm{Z}^{M,N})(t)\|_{H^{-1}(\T)}^2\right)+\EE\left(\|\pi_M(\bm{Z}^{M,N})\|_{L^2(Q_T)}^2\right) \lesssim   \EE(A_{T,M,N}(0)) 
          + T \frac{M^2}{N} + \delta_M,
  \end{align*}
  where $(\delta_{M})_M\rightarrow 0$. Recalling the definition \eqref{eq:defATMN} of $A_{T,M,N}(0)$, we use $\meanM{\bm{Z}^{M,N}(0)}^2 \leq \|\bm{Z}^{M,N}(0)\|_{-1,M}^2$ and that $\meanM{\Lambda^{M,N}(0)} = d_1 + a_{12}\meanM{\bm{V}^{M,N}(0)}$ is uniformly bounded almost surely thanks to \eqref{condinitborn} (with similar estimates for the second species) to infer  
  \begin{multline*}
  \sup_{t\in[0,T]}\EE\left(\|\pi_M(\bm{Z}^{M,N})(t)\|_{H^{-1}(\T)}^2\right)+\EE\left(\|\pi_M(\bm{Z}^{M,N})\|_{L^2(Q_T)}^2\right) \\ \lesssim   \EE\left(\|\pi_M(\bm{Z}^{M,N})(0)\|_{H^{-1}(\T)}^2\right) + \EE\left(\|\pi_M(\bm{W}^{M,N})(0)\|_{H^{-1}(\T)}^2\right) 
          + \frac{M^2}{N} + \delta_M.
  \end{multline*}
   Of course we can replace $\bm{Z}^{M,N}(0)$ by $\zeta^{M,N}(0)$ but with an extra cost of $\|\iota_M(u_0)-u_0\|_{H^{-1}(\T)}^2$, that we somehow already had looking at the r.h.s. of \eqref{ineq:believeproof}. For this, we invoke Lemma~\ref{lem:bh} which allow us to write
  \begin{align*}
    \sup_{t\in[0,T]}\|\iota_M(u)-u\|_{H^{-1}(\T)}^2+\|\iota_M(u)-u\|_{L^2(Q_T)}^2 \lesssim M^{-4} \|u\|_{L^\infty\cap L^2([0,T];H^2(\T))}^2,
    \end{align*}
which can be added to the sequence $(\delta_M)_M$ going to $0$. Proceeding similarly to get the control on $\bm{W}^{M,N}$ and gathering all the terms leads to the conclusion.
\end{proof}

\section{Appendix}

\subsection{Nonlinear Gronwall lemma}
\begin{lem}\label{lem:gron}
Assume $0\leq \psi,\theta \in \mathscr{C}^0(\mathbb{R}_+)$ satisfy for some constant $C_0\geq 0$ and $0\leq c\in L_{\textnormal{loc}}^1(\mathbb{R}_+)$
\begin{align*}
    \psi(t) + \theta(t) \leq C_0 + 2 \int_0^t c(s)\psi(s)^{1/2}\,\dd s.
\end{align*}
Then there holds
\begin{align*}
    \psi(t) + \theta(t) \leq \Big(C_0^{1/2} +  \int_0^t c(s)\,\dd s\Big)^2.
\end{align*}
\end{lem}
\begin{proof}
W.l.o.g we can assume $C_0>0$ (or replace it by $C_0+\varepsilon$ and let $\varepsilon\rightarrow 0$). In that case the function \[a(t):=C_0+2\int_0^t c(s)\psi(s)^{1/2}\,\dd s\]
satisfies $a'(t) = 2 c(t) \psi(t)^{1/2}\leq 2 c(t) a(t)^{1/2}$ which integrates  as \[a(t)^{1/2}-C_0^{1/2}\leq \int_0^t c(s)\,\dd s,\]
since $a(t)\geq a(0)=C_0>0$. The conclusion follows using $\psi(t)+\theta(t)\leq a(t)$. $\qedhere$
\end{proof}
\subsection{Difference quotient and Sobolev spaces}\label{subsec:sobo}
For a real number $h\neq 0$, we consider the difference quotient operator \begin{align*}
      \textnormal{D}_h : L^1(\T)&\longrightarrow L^1(\T), \\
       f&\longmapsto \frac{\tau_h f - f}{h}.
       \end{align*}
We recall the following standard result for Sobolev spaces 
\begin{prop}\label{prop:sobo1}
  For $f\in H^1(\T)$, one has for any $h\neq 0$, $\|\textnormal{D}_h f\|_2 \leq \|f'\|_2$. In particular, $(\textnormal{D}_h f)_h\rightarrow f'$ in $L^2(\T)$, as $h\rightarrow 0$.
\end{prop}
The estimate is (for instance) proven in \cite[Proposition 9.3]{brezis} and the convergence is obtained by a straightforward density argument. From the previous result we recover the following one. 
\begin{prop}\label{prop:sobo2}
  For $f\in H^2(\T)$ has $(\textnormal{D}_{-h}\textnormal{D}_h f)_h \rightarrow f''$ in $L^2(\T)$ as $h\rightarrow 0$.
\end{prop}
\begin{proof}
We use Proposition~\ref{prop:sobo1} for both $f$ and $f'$, writing 
\begin{align*}
    \textnormal{D}_{-h}\textnormal{D}_h f - f'' = \textnormal{D}_{-h} (\textnormal{D}_h f -f') + (\textnormal{D}_{-h} f' - f''),
\end{align*}
and the conclusion follows since the family of operators $(\textnormal{D}_h)_{h\neq 0}$ is uniformly bounded from $H^1(\T)$ to $L^2(\T)$. $\qedhere$ 
\end{proof}
With the previous, one gets eventually the following result. 
\begin{prop}\label{prop:sobo3}
  For $f,g \in H^3(\T)$, on has $(\textnormal{D}_{-h}\textnormal{D}_h f)_h \rightarrow f''$ in $L^\infty(\T)$ as well as $(\textnormal{D}_{-h}\textnormal{D}_h fg)_h \rightarrow (fg)''$ in $L^\infty(\T)$, as $h\rightarrow 0$.
\end{prop}
\begin{proof}
For the first convergence, one just note that differentiation commutes with difference quotients so that from Proposition~\ref{prop:sobo2} one directly infers $(\textnormal{D}_{-h}\textnormal{D}_h f)_h \rightarrow f''$ in $H^1(\T)\hookrightarrow \mathscr{C}^0(\T)$ and the uniform limit follows. For the product, we simply use that $H^3(\T)$ is an algebra (see \emph{e.g.} \cite[Theorem 4.39]{adams} for a proof). $\qedhere$ 
\end{proof}

\subsection{Discrete--continuous dictionnary}

\begin{center}
\begin{tabular}{c|c}
  Discrete & Continuous\\
  \hline
  $\Delta_M$ & $\Delta$ \\
  $\|\cdot\|_{p,M}$ & $\|\cdot\|_{L^p(\T)}$\\
  $(\cdot|\cdot)_M$ & $(\cdot|\cdot)_{L^2(\T)}$\\
  $\|\cdot\|_{-1,M}$ & $\|\cdot\|_{H^{-1}(\T)}$\\
  $\vertiii{\cdot}_{T,M}$ &   $\vertiii{\cdot}_{T}$\\
  $\meanM{\cdot}$ & $\mean{\cdot}$
\end{tabular}
\end{center}

\bigskip

\paragraph*{Acknowledgements}
The authors thank the two anonymous referees for their valuable remarks which led to a substantial improvement of the article. They also thank Joaquin Fontbona for stimulating discussions on this topic. A.~M. would like to thank Ariane Trescases for a fruitful discussion which took place at the Tsinghua University in China, back in April 2015, during which the core idea of Theorem~\ref{thm:sktstab} was discovered, even though the proof was not yet formalized (\cite{moussa_nonloc_tri} did not exist back then!). F. M.-H. acknowledges financial support received under the Doctoral Fellowship ANID-PFCHA/Doctorado Nacional/2017-21171912 and thanks support from Millennium Nucleus Stochastic Models of Complex and Disordered Systems from Millennium Scientific Initiative. This work  was partially funded by the Chair "Mod\'elisation Math\'ematique et Biodiversit\'e" of VEOLIA-Ecole Polytechnique-MNHN-F.X and ANR ABIM 16-CE40-0001.

%\bibliographystyle{plain}
%\bibliography{cross-diff}

\end{document}